\newcounter{theorem}
\newtheorem{theorem}[theorem]{Theorem}
\newtheorem{lemma}[theorem]{Lemma}
\newtheorem{prop}[theorem]{Proposition}
\theoremstyle{definition}
\newtheorem{defn}[theorem]{Definition}
\theoremstyle{remark}
\newtheorem*{remark*}{Remark}
\newtheorem{rmk}[theorem]{Remark}
\numberwithin{equation}{section}
\newcommand{\C}{\mathrm{C}^*}
\newcommand{\Z}{\mathcal{Z}}
\newcommand{\id}{\mathrm{id}}
\newcommand{\N}{\mathbb{N}}
\newcommand{\Aut}{\mathrm{Aut}}
\newcommand{\Ad}{\mathrm{Ad}}
\newcommand{\ev}{\mathrm{ev}}
\newcommand{\Aff}{\mathrm{Aff}}
\newcommand{\Ell}{\mathrm{Ell}}
\newcommand{\UCT}{\mathrm{UCT}}
\newcommand{\KMS}{\mathrm{KMS}}
\newcommand{\supp}{\mathrm{supp}}
\title{The admissible KMS bundles on classifiable C$^*$-algebras}
\author{Robert Neagu}
\address{\hskip-\parindent Robert Neagu, Department of mathematics, KU Leuven, Celestijnenlaan 200B, 3001, Leuven, Belgium.}
\email{robert.neagu@kuleuven.be}
\thanks{Funded by the European Union. Views and opinions expressed are however those of the authors only and do not necessarily reflect those of the European Union or the European Research Council. Neither the EU nor the ERC can be held responsible for them. For the purpose of open access, the author has applied a CC-BY license to any author accepted manuscript arising from this submission.}
\begin{document}
\maketitle

\begin{abstract}
Given any unital, finite, classifiable $\C$-algebra $A$ with real rank zero and any compact simplex bundle with the fibre at zero being homeomorphic to the space of tracial states on $A$, we show that there exists a flow on $A$ realising this simplex. Moreover, we show that given any unital $\UCT$ Kirchberg algebra $A$ and any proper simplex bundle with empty fibre at zero, there exists a flow on $A$ realising this simplex.
\end{abstract}

\numberwithin{theorem}{section}	

\section*{Introduction}
\renewcommand*{\thetheorem}{\Alph{theorem}}

The study of group actions is ubiquitous within the subject of operator algebras, often providing deep structural properties.\ While classifying actions of discrete groups proved to be an indispensable tool for understanding the structure and symmetries of operator algebras, certain applications to geometry or physics are usually related to time evolutions, or continuous actions of $\mathbb{R}$. In the setting of von Neumann algebras, such actions appeared most prominently in the form of modular automorphism groups through the theory of Tomita and Takesaki (\cite{Tak70}).\ One convenient feature of the Tomita-Takesaki theory is that one can check if a certain flow is a modular flow by verifying the so-called $\KMS$ condition.\ Precisely, a faithful normal state $\varphi$ on a von Neumann algebra satisfies the $\KMS$ conditions for a flow if and only if the given flow is the modular flow induced by $\varphi$. Therefore, investigating the set of corresponding $\KMS$ states sheds light on questions regarding structure and classification of continuous actions of $\mathbb R$ on von Neumann algebras.

On the $\C$-algebraic side, named after mathematical physicists Kubo, Martin, and Schwinger, $\KMS$ states are a special class of states on any $\C$-algebra admitting a continuous action of $\mathbb R$. However, the collection of $\KMS$ states for a given flow on a $\C$-algebra can be quite intricate.\ Indeed, given a flow on a unital separable $\C$-algebra, the collection of $\KMS$ states corresponding to that flow, also called the $\KMS$ bundle of the flow, has the structure of a \emph{proper simplex bundle} (see for example \cite[Section 2.1]{ET}).

The quest of constructing flows inducing a given $\KMS$ simplex has its roots in work of Bratteli, Elliott, Herman, and Kishimoto in \cite{BEH80,BEK80,BEK86}, where flows were constructed on certain classes of simple $\C$-algebras.\ More recently, extensive work has been done to examine possible $\KMS$ bundles on various special classes of $\C$-algebras (see for example \cite{HLRS15,T17,ALN20,CV22,CT22,C23,BE23}).\ Moreover, in \cite{KT21,ET} Elliott and Thomsen built flows on simple AF-algebras with prescribed $\KMS$ behaviour.\ These results were then extended by Elliott, Sato, and Thomsen in \cite{EST,ES} to any unital classifiable $\C$-algebra with a unique trace, as well as any unital $\UCT$ Kirchberg algebra with no torsion in the $K_1$-group.

Using the new abstract classification of morphisms from \cite{classif}, we will prove a similar result for any unital tracial classifiable $\C$-algebra with real rank zero. However, as in \cite[Theorem 3.14]{EST}, the assumption that the $\KMS$ bundle is compact is still needed in the case of tracial $\C$-algebras.\ In \cite{ES}, this assumption was removed in the case when the given classifiable $\C$-algebra has a unique trace. Moreover, without any restrictions on the given $\KMS$ bundle, we will remove the assumption of no torsion in the $K_1$-group from \cite[Theorem 5.1]{EST} to obtain the definitive result for unital $\UCT$ Kirchberg algebras. The main results of this paper are the following.

\begin{theorem}\label{thm:ExistenceKirchberg}
Let $A$ be a unital $\UCT$ Kirchberg algebra and let $(S,\pi)$ be a proper simplex bundle such that $\pi^{-1}(0)=\emptyset$. Then there exists a $2\pi$-periodic flow $\theta$ on $A$ such that its induced $\KMS$ bundle $(S^\theta,\pi^\theta)$ is isomorphic to $(S,\pi)$.  
\end{theorem}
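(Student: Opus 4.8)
The plan is to realise $(S,\pi)$ by a $2\pi$-periodic flow on a \emph{model} Kirchberg algebra carrying the correct $K$-theory, and then transport it to $A$ via Kirchberg--Phillips classification. Recall first that a $2\pi$-periodic flow on a unital $\C$-algebra $B$ is the same datum as a continuous action of $\mathbb{T}=\mathbb{R}/2\pi\mathbb{Z}$, hence a $\mathbb{Z}$-grading $B=\overline{\bigoplus_{n\in\mathbb{Z}}B_n}$ into spectral subspaces; a state is $\beta$-$\KMS$ precisely when it factors through the canonical conditional expectation onto the fixed-point algebra $B_0$, restricts there to a trace $\tau$, and satisfies the scaling relation $\tau(aa^*)=e^{-\beta}\tau(a^*a)$ for $a$ in the first spectral subspace. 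Thus the $\KMS$ bundle of such a flow depends only on $B_0$ together with its first spectral subspace, and in particular these data place essentially no constraint on $K_1(B)$. It therefore suffices to produce a unital $\UCT$ Kirchberg algebra $D$ with a $2\pi$-periodic flow $\sigma$ whose $\KMS$ bundle is isomorphic to $(S,\pi)$ and with $(K_0(D),[1_D],K_1(D))\cong(K_0(A),[1_A],K_1(A))$: then $D\cong A$ by Kirchberg--Phillips, and conjugating $\sigma$ through such an isomorphism yields a $2\pi$-periodic flow $\theta$ on $A$ whose $\KMS$ bundle, which transports along $\ast$-isomorphisms, is isomorphic to $(S,\pi)$.

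When $K_1(A)$ is torsion-free, such a pair $(D,\sigma)$ is furnished by \cite[Theorem 5.1]{EST}: one realises a compact simplex bundle over a compact window of temperatures on a suitable building block carrying a natural $2\pi$-periodic gauge-type flow, twists the flow by an inner part to position and shift the temperatures, and patches countably many such pieces over $(-\infty,0)$ and over $(0,\infty)$ separately --- here the hypotheses that $\pi$ is proper and $\pi^{-1}(0)=\emptyset$ are what let the two half-lines decouple and guarantee that the fibres recede as $\beta\to 0^{\pm}$ and as $|\beta|\to\infty$. For general $K_1(A)$ I would instead build $D=\varinjlim(D_n,\sigma_n,\psi_n)$ as an inductive limit of unital $\C$-algebras $D_n$ equipped with $2\pi$-periodic flows $\sigma_n$ and connecting $\ast$-homomorphisms $\psi_n\colon D_n\to D_{n+1}$, arranged so that the limit is simple and purely infinite with $K$-theory $(K_0(A),[1_A],K_1(A))$, while the flows pass to a limit flow $\sigma=\varinjlim\sigma_n$ whose $\KMS$ bundle --- computed as the inverse limit along the $\psi_n$ of the bundles of the $\sigma_n$ --- is $(S,\pi)$.

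The delicate point, and where the new abstract classification of morphisms of \cite{classif} enters, is that each $\psi_n$ must simultaneously (i) be approximately equivariant for $(\sigma_n,\sigma_{n+1})$, so that $\sigma$ exists and has the prescribed $\KMS$ bundle via an Elliott-type intertwining, and (ii) have a prescribed $KK$-class, so that the limit acquires the correct $K$-theory, including the $K_1$-torsion that \cite[Theorem 5.1]{EST} cannot produce. The classification of morphisms allows one to prescribe the invariant of $\psi_n$ and then perturb it, within approximate unitary equivalence, to be compatible with the flows without disturbing that invariant; routing in a torsion summand of $K_1$ then reduces to choosing the $K$-theory maps of the $\psi_n$ so that $\varinjlim K_1(D_n)\cong K_1(A)$ and $\varinjlim(K_0(D_n),[1_{D_n}])\cong(K_0(A),[1_A])$. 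The step I expect to be the main obstacle is precisely this balancing act --- keeping the connecting maps (approximately) equivariant while forcing their $K$-theory, and checking that the intertwining still converges in the presence of $K_1$-torsion --- together with verifying simplicity and pure infiniteness of the limit. Once $D$ is in hand, the remaining steps (Kirchberg--Phillips to identify $D$ with $A$, the automatic $2\pi$-periodicity of $\theta$, and the identification of its $\KMS$ bundle with $(S,\pi)$) are routine.
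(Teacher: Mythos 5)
Your reduction --- build a model unital $\UCT$ Kirchberg algebra $D$ with $(K_0(D),[1_D]_0,K_1(D))\cong(K_0(A),[1_A]_0,K_1(A))$ carrying a $2\pi$-periodic flow with the prescribed $\KMS$ bundle, then transport the flow to $A$ through a Kirchberg--Phillips isomorphism --- is sound and is indeed how the paper finishes. But the heart of the theorem is the construction of $(D,\sigma)$, and there your proposal has a genuine gap: the equivariant inductive limit $D=\varinjlim(D_n,\sigma_n,\psi_n)$ is only a sketch whose hardest steps are left open. Concretely: (i) you give no construction of building blocks $(D_n,\sigma_n)$ realising compact pieces of $(S,\pi)$; (ii) approximately equivariant connecting maps do not by themselves produce a flow on the limit --- one needs an exact (or two-sided approximately intertwined) equivariant system, and correcting $\psi_n$ to be equivariant while pinning down its $KK$-class is precisely the step you defer; (iii) the assertion that the $\KMS$ bundle of the limit is the inverse limit of the bundles of the $\sigma_n$ is not justified and is delicate even for exactly equivariant limits (fibres can collapse and new $\KMS$ states can appear at infinity); and (iv) \cite{classif} classifies morphisms between stably finite classifiable algebras by $K$-theory, traces and the pairing, so it is not the off-the-shelf tool for prescribing $KK$-classes of maps between purely infinite building blocks while respecting flows.

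The paper avoids all of this by never building the flow as a limit of flows. Instead it constructs a stable A$\mathbb{T}$-algebra $B$ of real rank zero whose $K_0$-group $\Gamma=H_0\oplus G$ encodes both a dense subgroup $G\subseteq\mathcal{A}(S,\pi)$ and an extension $H_0$ of $K_0(A)$, and whose $K_1$-group is $H_1\oplus G$ with $H_1$ an extension of $K_1(A)$ (Rørdam's short exact sequence construction applied to $(K_0(A),0)$ and $(K_1(A),0)$ \emph{separately} --- this is exactly what absorbs torsion in $K_1(A)$ and is why $B$ must be A$\mathbb{T}$ rather than AF). It then chooses an automorphism $\gamma$ of $B$ with finite Rokhlin dimension realising $\kappa_0\oplus\alpha$ and $\kappa_1\oplus\alpha$ on $K_*$, identifies $A$ with a full corner $e(B\rtimes_\gamma\mathbb{Z})e$ via the Pimsner--Voiculescu sequence and Kirchberg--Phillips, and takes $\theta$ to be the restriction of the dual action; the $\KMS$ bundle is then read off from the bijection between $\beta$-$\KMS$ states and traces $\tau$ on $B$ with $\tau\circ\gamma=e^{-\beta}\tau$, $\tau(e)=1$. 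To complete your argument along your own lines you would have to supply the building blocks, the exact-equivariance correction, and the limit computation of the bundle --- each of which is a substantial piece of work that the crossed-product route circumvents.
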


\begin{theorem}\label{thm:ExistenceTracial}
Let $A$ be a unital, stably finite, classifiable $\C$-algebra with real rank zero and let $(S,\pi)$ be a compact simplex bundle such that $\pi^{-1}(0)$ is affinely homeomorphic to $T(A)$.\ Then there exists a $2\pi$-periodic flow $\theta$ on $A$ such that its induced $\KMS$ bundle $(S^\theta,\pi^\theta)$ is isomorphic to $(S,\pi)$. 
\end{theorem}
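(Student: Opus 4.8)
The plan is to realise $(S,\pi)$ as the $\KMS$ bundle of a flow built from an inductive system of elementary building blocks carrying explicit $2\pi$-periodic flows, and then to identify the inductive limit with $A$ through classification. One preliminary remark organises the argument: any flow $\theta$ on a unital classifiable $\C$-algebra acts trivially on the $K$-theoretic part of its Elliott invariant, since $t\mapsto(\theta_t)_*$ is a continuous path into the discrete automorphism groups of $K_0$, $K_1$ and the total $K$-theory which starts at the identity. (The flow may of course move traces; but since we are prescribing a $\KMS$ bundle and not an automorphism, there is no automorphism to realise.) Thus the only datum to be realised is $(S,\pi)$ itself, whose fibre over $0$ is, by hypothesis, $T(A)$ --- consistently, since $A$ has real rank zero and so $T(A)\cong S(K_0(A),[1_A])$.

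\emph{Step 1: building blocks and their $\KMS$ bundles.} For a self-adjoint integer matrix $H$ the inner flow $t\mapsto\Ad(e^{itH})$ is $2\pi$-periodic, and it makes sense on each of the building blocks used in the classification of classifiable $\C$-algebras of real rank zero: matrix algebras, circle algebras $M_k(C(\mathbb T))$ (acting trivially on the circle coordinate), and dimension-drop type algebras (the inner flow preserves the scalar boundary conditions). A direct computation of the $\KMS_\beta$ condition shows that the $\KMS$ states for such a flow are precisely the Gibbs states $a\mapsto\mathrm{Tr}(e^{-\beta H}a)/\mathrm{Tr}(e^{-\beta H})$ integrated against an arbitrary probability measure on the spectrum of the centre; so each building block, equipped with such a flow, has a compact $\KMS$ bundle whose fibres are Bauer simplices. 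As in the Elliott--Thomsen construction of flows on simple AF algebras, finite direct sums of these blocks together with suitably chosen connecting maps produce, in the inductive limit, a flow whose $\KMS$ bundle is \emph{any} prescribed compact simplex bundle; the circle and dimension-drop blocks contribute only the possibility of non-trivial $K_1$ and of torsion in the limiting $K$-theory, without affecting the $\KMS$ analysis.

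\emph{Step 2: matching $A$.} Using the description of the range of the Elliott invariant for classifiable $\C$-algebras of real rank zero, one arranges the inductive system $(B_n,\theta^{(n)})$ so that, simultaneously, the limit flow $\theta$ on $B=\varinjlim B_n$ has $\KMS$ bundle isomorphic to $(S,\pi)$ and $B$ is classifiable of real rank zero with Elliott invariant isomorphic to that of $A$ --- the needed identification of trace simplices being exactly the hypothesis $\pi^{-1}(0)\cong T(A)$. This is where \cite{classif} enters: the connecting maps $B_n\to B_{n+1}$ must have a prescribed effect on $K$-theory and traces \emph{and} be, up to approximate unitary equivalence, equivariant for $\theta^{(n)}$ and $\theta^{(n+1)}$. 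Since the flows on the $B_n$ are inner, for each fixed $t$ the maps $\varphi$ and $\theta^{(n+1)}_{-t}\circ\varphi\circ\theta^{(n)}_t$ have the same invariant, so the morphism classification of \cite{classif} makes them approximately unitarily equivalent; a continuity-and-cocycle argument promotes this to an approximate intertwining of the flows, and an Elliott two-sided approximate intertwining then yields an isomorphism $B\cong A$ transporting $\theta$ to a $2\pi$-periodic flow on $A$ with $\KMS$ bundle isomorphic to $(S,\pi)$.

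The main obstacle is Step 2: building a \emph{single} inductive system of building blocks that realises the Elliott invariant of $A$, keeps real rank zero, and at the same time realises $(S,\pi)$ as the limit of the Bauer bundles of the blocks. Real rank zero and the prescribed $K$-theory dictate the multiplicities of the connecting maps and force them to spread projections out, while fidelity to $(S,\pi)$ constrains how the Gibbs data at consecutive stages may be composed; reconciling these competing requirements is exactly what the flexibility in \cite{classif} --- freedom to choose each connecting map, up to approximate unitary equivalence and hence up to approximate equivariance, among all maps with the prescribed invariant --- is there to provide. A final point demanding care is the behaviour near $\beta=0$, where the fibre of the constructed bundle must be identified with $T(A)$ on the nose rather than with an abstract simplex.
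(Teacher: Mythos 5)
Your proposal takes a genuinely different route from the paper --- a direct inductive-limit construction of the flow out of inner flows $\Ad(e^{itH})$ on building blocks --- and, as written, it has gaps that are not merely technical. The paper instead realises $A$ as a full corner $p\left((B\otimes\mathcal{K})\rtimes_\gamma\mathbb{Z}\right)p$ for a classifiable algebra $B$ whose $K_0$-group is built from $\bigl(\bigoplus_{\mathbb{Z}}K_0(A)\bigr)\oplus G_0$ with $G_0$ a dense subgroup of $\mathcal{A}_0(S,\pi)$, takes $\gamma$ to induce the shift-and-multiply automorphism $\alpha$ (this is where \cite{classif} enters, as a classification of unital $^*$-homomorphisms between corners of $B\otimes\mathcal{K}$, combined with finite Rokhlin dimension to control traces of the crossed product), and defines $\theta$ as the restricted dual action. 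The point of that detour is that $\beta$-$\KMS$ states for the dual action correspond \emph{exactly} --- not approximately --- to densely defined lower semicontinuous traces $\tau$ on $B\otimes\mathcal{K}$ with $\tau\circ\gamma=e^{-\beta}\tau$, which are in turn read off from states on the ordered group $G$. The entire analytic problem is thereby converted into order-theoretic statements about $(G,G_+,v)$.

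The concrete gaps in your Step 2 are the following. First, the passage from ``for each fixed $t$ the maps $\varphi$ and $\theta^{(n+1)}_{-t}\circ\varphi\circ\theta^{(n)}_t$ are approximately unitarily equivalent'' to ``an approximate intertwining of the flows'' is unjustified: the unitaries produced by \cite{classif} depend on $t$ and on the finite set and tolerance, satisfy no cocycle identity, and carry no uniformity in $t$. Producing a coherent, continuous family of such unitaries is essentially the (open) problem of classifying flows up to cocycle conjugacy, and no ``continuity-and-cocycle argument'' is known that does this in the stated generality. Second, even granting an equivariant inductive system, the identification of the $\KMS$ bundle of the limit flow with the limit of the Bauer bundles of the stages is not automatic: a $\KMS$ state of the limit restricts only to approximately $\KMS$ states of the stages, and controlling this limit is precisely the delicate point that the crossed-product/dual-action formalism is designed to avoid. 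Third, the simultaneous realisation of the Elliott invariant of $A$ (including torsion in $K_0$, the pairing with traces, and real rank zero) and of $(S,\pi)$ by one system of blocks with compatible Gibbs data is asserted rather than constructed; in the paper this is exactly the content of Lemmas \ref{lemma: ConstrG0Tracial}--\ref{lemma: IsomCorner}, where the compactness of $S$ is used to make $(G,G_+)$ simple and the real rank zero hypothesis is used to make $S(G)$ a Choquet simplex --- neither of these uses is visible in your outline, which suggests the hypotheses are not actually being consumed where they must be.
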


The proof of Theorem \ref{thm:ExistenceKirchberg} follows the skeleton provided by \cite[Theorem 5.1]{EST}. Essentially, given a unital $\UCT$ Kirchberg algebra $A$ and a suitable simplex bundle $(S,\pi)$, one can build a pair of abelian groups containing both information from the $K$-theory of $A$ as well as the bundle $(S,\pi)$. Then, using the classification results in \cite{elliottclassifRR0}, we can obtain a stable $A\mathbb{T}$-algebra of real rank zero $B$ realising the constructed pair of abelian groups. Then, $A$ can be identified with a full corner of a crossed product of $B$ by $\mathbb{Z}$. It is precisely the fact that $B$ is an $A\mathbb{T}$-algebra rather than an AF-algebra (see \cite[Theorem 5.1]{EST}) that allows us to remove the assumption that $A$ has no torsion in $K_1$. 

The proof of Theorem \ref{thm:ExistenceTracial} uses a similar strategy.\ Given a unital classifiable $\C$-algebra $A$ with real rank zero, we realise it as a full corner of a crossed product of a stabilised unital classifiable $\C$-algebra $B\otimes\mathcal{K}$ by $\mathbb{Z}$. A key ingredient in obtaining an appropriate automorphism of $B\otimes\mathcal{K}$ is the classification of unital $^*$-homomorphisms from \cite{classif}. Moreover, compared to \cite[Theorem 3.14]{EST}, where the corresponding result was proved for the Jiang-Su algebra $\Z$, extra care has to be taken in checking that $A$ and the corner of the crossed product built above have the same pairing between $K$-theory and traces. The assumption that $A$ has real rank zero is instrumental in constructing an Elliott invariant of a classifiable $\C$-algebra $B$. Precisely, one can build an ordered abelian group $G$ and then realise the trace space of $B$ as the space of states on $G$. However, to make sure that the state space on $G$ is a metrisable Choquet simplex, the fact that $A$ has real rank zero is heavily used. The assumption that the space $S$ is compact in Theorem \ref{thm:ExistenceTracial} seems to be more difficult to remove. In particular, it ensures that the constructed ordered abelian group is simple, and hence the $\C$-algebra $B$ is simple. Then, one can use a classification of automorphisms of $B\otimes\mathcal{K}$ to obtain a suitable automorphism $\gamma$. Note that a definitive classification theorem for automorphisms of general nonsimple $\Z$-stable $\C$-algebras is currently out of reach, so the compactness assumption appears necessary at this point. 

This paper is organised as follows.\ In Section \ref{sect: Prelim} we collect some definitions regarding $\KMS$ states and classification invariants.\ Then, Section \ref{sect: ProofA} is concerned with the proof of Theorem \ref{thm:ExistenceKirchberg}, while  Section \ref{sect: ProofB} will focus on the proof of Theorem \ref{thm:ExistenceTracial}.

\subsection*{Acknowledgements}
The author was partly supported by the EPSRC grant EP/R513295/1 and by the European Research Council under the European Union's Horizon Europe research and innovation programme (ERC grant AMEN-101124789).
The author would like to thank Stuart White and Jamie Gabe for their supervision on this project and Stuart White for numerous helpful comments on earlier drafts of this paper. The author would also like to thank Hung-Chang Liao for allowing him to include a proof of Lemma \ref{lemma: TracesCrossProd}. This article will form part of the author's DPhil thesis.

\allowdisplaybreaks

\section{Preliminaries}\label{sect: Prelim}
\numberwithin{theorem}{section}

\subsection{KMS states}\label{sect: KMS}

In this subsection, we record some facts about $\KMS$ states. We refer the reader to \cite{ThomsenBook} for a detailed account of the topic. 

A \emph{flow} $\theta$ on a $\C$-algebra $A$ is a continuous one-parameter group of automorphisms $(\theta_t)_{t\in\mathbb{R}}$.

\begin{defn}\label{defn: KMS}
Let $A$ be a $\C$-algebra, $\theta$ be a flow on $A$, and $\beta\in\mathbb{R}$. Then, $\omega$ is said to be a \emph{$\beta$-$\KMS$ weight} for $\theta$ on $A$ if $\omega$ is a weight on $A$ such that $$\omega\circ\theta_t=\omega \ \text{for all} \ t\in\mathbb{R}$$ and $$\omega(a^*a)=\omega\left(\theta_{-\frac{i\beta}{2}}(a)\theta_{-\frac{i\beta}{2}}(a)^*\right), \ a\in A \ \text{analytic}.\footnote{An element $a \in A$ is analytic for $\theta$ if the map $t \in \mathbb R \mapsto \theta_t(a) \in A$ extends to an entire analytic map $\mathbb{C} \to A.$}$$ If, in addition, $\omega$ has norm $1$, then it is called a $\beta$-$\KMS$ state for $\theta$ on $A$.  
\end{defn}

\begin{rmk}
Note that a $0$-$\KMS$ weight for $\theta$ is a $\theta$-invariant trace. Moreover, a $0$-$\KMS$ state for $\theta$ is a $\theta$-invariant tracial state.
\end{rmk}

If $A$ is a separable unital $\C$-algebra and $\theta$ is a flow on $A$, we consider the collection of $\KMS$ states for $\theta$ on $A$. Following the notation in \cite[Section 2]{EST}, for each $\beta\in\mathbb{R}$, let $S_\beta^\theta$ denote the set of $\beta$-$\KMS$ states for $\theta$. If we denote the set of states on $A$ by $S(A)$, let $$S^\theta=\left\{(\omega,\beta)\in S(A)\times\mathbb{R}: \omega\in S_\beta^\theta\right\},$$ and equip it with the relative topology from $S(A)\times\mathbb{R}$. In particular, since $A$ is separable, the topology on $S^{\theta}$ is metrisable. Let $\pi^\theta:S^\theta\to\mathbb{R}$ be the projection onto the second coordinate. Then, the pair $(S^\theta, \pi^\theta)$ will be called the \emph{$\KMS$ bundle} of $\theta$.\ This was characterised abstractly in \cite{EST} as we review below.

Fix a second countable locally compact Hausdorff space $S$ and let $\pi$  be a continuous map from $S$ to $\mathbb{R}$. The pair $(S, \pi)$ is said to be a \emph{simplex bundle} if the inverse image $\pi^{-1}(t)$ is a compact metrisable Choquet simplex in the relative topology from $S$ for any $t\in \mathbb{R}$. For such pairs $(S,\pi)$, we denote by $\mathcal A(S, \pi)$ the set of continuous functions $f$ from $S$ to $\mathbb R$ such that 
the restriction $f|_{\pi^{-1}(t)}$ of $f$ to $\pi^{-1}(t)$ is affine for any $t\in\mathbb R$.
\begin{defn}{\cite[Definition 2.1]{EST}}\label{defn: SimplexBundle}
A simplex bundle $(S, \pi)$ is \emph{proper}, if
\begin{itemize}
\item $\pi^{-1}(K)$ is compact in $S$ for any compact subset $K$ of $\mathbb R$, 
\item for any $x\neq y$ in $S$, there exists  $f \in\mathcal A(S,\pi)$ such that $f(x) \neq f(y)$.
\end{itemize} Moreover, if $S$ is compact, then $(S,\pi)$ is called a \emph{compact simplex bundle}. Two proper simplex bundles $(S,\pi)$ and $(S',\pi')$ are \emph{isomorphic} if there exists a homeomorphism $\phi : S \to S'$ such that $$\pi' \circ \phi = \pi \ \text{and} \ \phi: \pi^{-1}(\beta) \to {\pi'}^{-1}(\beta) \ \text{is affine for all} \ \beta \in \mathbb R.$$
\end{defn}

\begin{rmk}\label{rmk: MetrisBundle}
By \cite[Theorem 9.2.2]{ThomsenBook}, the collection of $\KMS$ states for a flow on a unital separable $\C$-algebra is a proper simplex bundle. Hence, this is a necessary assumption for the pair $(S,\pi)$ to be a suitable invariant. Moreover, a proper simplex bundle $(S,\pi)$ is second countable, locally compact, Hausdorff, so the topology on $S$ is metrisable.
\end{rmk}

\begin{lemma}\label{lemma: BundleHomeom}
Let $\eta:(S',\pi')\to (S,\pi)$ be a bijection between proper simplex bundles such that $\pi\circ\eta=\pi'$. If either $\eta$ or $\eta^{-1}$ is continuous, then $\eta$ is a homeomorphism.
\end{lemma}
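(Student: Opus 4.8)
I would prove this by a compactness/exhaustion argument, reducing the global statement to a sequence of local ones where $\eta$ (or $\eta^{-1}$) restricted to a compact set is automatically a homeomorphism. Recall the standard fact that a continuous bijection from a compact space to a Hausdorff space is a homeomorphism; the difficulty here is that $S'$ and $S$ need only be locally compact, so I need to localise using the properness of $\pi$ and $\pi'$.

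First I would reduce to the case where $\eta$ is continuous (the case where $\eta^{-1}$ is continuous is symmetric, applying the result to $\eta^{-1}:(S,\pi)\to(S',\pi')$, which is again a bijection of proper simplex bundles intertwining the projections). So assume $\eta$ is continuous and bijective with $\pi\circ\eta=\pi'$; I must show $\eta^{-1}$ is continuous, equivalently that $\eta$ is an open (or closed) map. Fix a point $x'\in S'$ and a neighbourhood $U$ of $x'$; I want to show $\eta(U)$ contains a neighbourhood of $\eta(x')$. Choose a compact neighbourhood $L\subseteq\mathbb{R}$ of $\pi'(x')=\pi(\eta(x'))$ in its interior. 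By properness, $C':=\pi'^{-1}(L)$ is compact in $S'$ and $C:=\pi^{-1}(L)$ is compact in $S$. Since $\pi\circ\eta=\pi'$, we have $\eta(C')\subseteq C$, and because $\eta$ is a bijection intertwining the projections, in fact $\eta(C')=C$: indeed for $y\in C$, $\eta^{-1}(y)$ has $\pi'(\eta^{-1}(y))=\pi(y)\in L$, so $\eta^{-1}(y)\in C'$. Thus $\eta|_{C'}:C'\to C$ is a continuous bijection between compact Hausdorff spaces, hence a homeomorphism.

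Now I would use this to conclude. The set $C'$ is a neighbourhood of $x'$ in $S'$: since $\pi'$ is continuous and $L$ contains $\pi'(x')$ in its interior, $\pi'^{-1}(\mathrm{int}\,L)$ is an open neighbourhood of $x'$ contained in $C'$. Similarly $C=\pi^{-1}(L)$ is a neighbourhood of $\eta(x')$ in $S$. Now $U\cap C'$ is a neighbourhood of $x'$ in the subspace $C'$, so since $\eta|_{C'}$ is a homeomorphism onto $C$, the image $\eta(U\cap C')$ is a neighbourhood of $\eta(x')$ in the subspace $C$; but $C$ is itself a neighbourhood of $\eta(x')$ in $S$, so $\eta(U\cap C')$ — and hence $\eta(U)$ — is a neighbourhood of $\eta(x')$ in $S$. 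Since $x'$ and $U$ were arbitrary, $\eta$ is open, so $\eta^{-1}$ is continuous and $\eta$ is a homeomorphism.

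**Main obstacle.** There is no serious obstacle; the only point requiring a little care is the bookkeeping around local compactness — making sure that the compact "slabs" $\pi^{-1}(L)$ are genuine neighbourhoods of the relevant points and that $\eta$ maps one slab bijectively onto the other, which is exactly where the hypothesis $\pi\circ\eta=\pi'$ (together with $\eta$ being a bijection) is used. The metrisability noted in Remark \ref{rmk: MetrisBundle} is not strictly needed for this argument, though it lets one phrase everything in terms of sequences if preferred.
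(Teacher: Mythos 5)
Your argument is correct. It does, however, package the key step differently from the paper. The paper's proof also reduces to the case where $\eta$ is continuous and also exploits the inclusion $\eta^{-1}(K)\subseteq(\pi')^{-1}(\pi(K))$ coming from $\pi\circ\eta=\pi'$ together with properness of the bundles, but it uses this to verify that $\eta$ is a \emph{proper} map and then invokes a cited general result that proper continuous maps are closed, concluding that $\eta$ is a closed map. You instead avoid any external citation: you restrict $\eta$ to the compact slabs $C'=(\pi')^{-1}(L)$ and $C=\pi^{-1}(L)$, observe that $\eta(C')=C$ (this is where bijectivity plus the intertwining is used, exactly as in the paper), apply the elementary fact that a continuous bijection from a compact space to a Hausdorff space is a homeomorphism, and then patch these local homeomorphisms together to show $\eta$ is an \emph{open} map. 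The trade-off is that your route is entirely self-contained and elementary but requires a little more point-set bookkeeping (checking that the slabs are genuine neighbourhoods and that ``neighbourhood in $C$'' upgrades to ``neighbourhood in $S$''), whereas the paper's route is shorter at the cost of quoting a standard result on proper maps. All the delicate points in your write-up (the identity $\eta(C')=C$, the passage from relative to ambient neighbourhoods via $\pi^{-1}(\mathrm{int}\,L)$, and the symmetry of the two cases) check out.
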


\begin{proof}
Suppose that $\eta$ is continuous. It remains to show that $\eta^{-1}$ is continuous. We show that $\eta$ maps closed sets to closed sets. By \cite[Corollary]{ProperMaps}, it suffices to check that $\eta$ is a proper map i.e. the preimage of any compact set is compact. Let $K\subseteq S$ be compact and note that $\pi(K)$ is compact since $\pi$ is continuous. Since $\pi\circ\eta=\pi'$, it follows that $$\eta^{-1}(K)\subseteq \eta^{-1}(\pi^{-1}(\pi(K)))=(\pi')^{-1}(\pi(K)).$$ But $(\pi')^{-1}(\pi(K))$ is compact since $(S',\pi')$ is a proper simplex bundle (see Definition \ref{defn: SimplexBundle}), so $\eta^{-1}(K)$ is compact as a closed subset of a compact set. This finishes the argument. The case when $\eta^{-1}$ is assumed to be continuous is completely symmetric.
\end{proof}

\subsection{Classification invariants}\label{sect: Invariants}

Let $A$ be a separable $\C$-algebra. We denote by $K_0(A)_+$ the set of positive elements in $K_0(A)$. We write $T(A)$ for the set of tracial states on $A$. Recall that for a unital $\C$-algebra $A$, there exists a natural pairing between the $K_0$-group and tracial states. For $n \in \mathbb N$ and $\tau \in T(A)$, we denote the canonical non-normalised extension of $\tau$ to a tracial functional on $M_n(A)$ by $\tau_n$. Then, the pairing map $\rho_A:K_0(A)\to \Aff(T(A))$ is given by $$\rho_A([p]_0 - [q]_0)(\tau) := \tau_n(p - q)$$ for all $\tau \in T (A)$ and all projections $p, q \in M_n(A)$.

\begin{defn}\label{defn: EllInv}
The \emph{Elliott invariant} of a unital separable $\C$-algebra $A$, denoted $\Ell(A)$, is given by 
\begin{equation*}
    \Ell(A)=\left(K_0(A), K_0(A)_+, [1_A]_0, K_1(A), T(A), \rho_A\right).
\end{equation*}
\end{defn}

\begin{rmk}
In the case of a stable $\C$-algebra, one needs to remove the class of the unit and consider the cone of densely defined lower semicontinuous traces instead of the set of tracial states.
\end{rmk}

Due to work by many mathematicians (e.g. \cite{kirchbergclass,phillipsclass,EGLN,TWW,GLN1,classif}), this invariant was shown to classify a large class of simple $\C$-algebras. In this paper, we will use the following classification theorem.

\begin{theorem}[see {\cite[Theorem A]{classif}}]\label{thm: Classif}
Unital simple separable nuclear $\Z$-stable $\C$-algebras satisfying Rosenberg and Schochet’s universal coefficient theorem (UCT) are classified by Elliott’s invariant consisting of $K$-theory and traces.    
\end{theorem}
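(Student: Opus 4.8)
The final statement is the deep classification theorem imported from \cite{classif}, so a ``proof'' here means recalling the strategy by which it is established.

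The plan is to deduce the classification of objects from the abstract classification of $^*$-homomorphisms of \cite{classif} together with Elliott's two-sided approximate intertwining. The heart of the matter is to prove, for unital simple separable nuclear $\Z$-stable $\C$-algebras $A$ and $B$ satisfying the UCT, an \textbf{existence} statement (every morphism at the level of invariants --- a compatible triple consisting of an order-unit preserving map $K_0(A)\to K_0(B)$, a map $K_1(A)\to K_1(B)$, and a continuous affine map $T(B)\to T(A)$ intertwining the pairings $\rho_A$ and $\rho_B$ --- is induced by a unital $^*$-homomorphism $A\to B$) together with a \textbf{uniqueness} statement (any two unital $^*$-homomorphisms $A\to B$ inducing the same invariant are approximately unitarily equivalent). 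Granting these, one lifts an isomorphism $\Ell(A)\cong\Ell(B)$ and its inverse to unital $^*$-homomorphisms $\phi:A\to B$ and $\psi:B\to A$; uniqueness gives $\psi\circ\phi\au\id_A$ and $\phi\circ\psi\au\id_B$, and a standard two-sided approximate intertwining argument then produces a $^*$-isomorphism $A\xrightarrow{\cong}B$ realising the prescribed invariant isomorphism.

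For existence I would use the UCT to promote the given $K$-theory data to a class $\kappa\in \mathrm{KK}(A,B)$, realise $\kappa$ by a $^*$-homomorphism via a $\mathrm{KK}$-existence theorem (unitalising via a $\Z$-stability correction), and then adjust the tracial behaviour: the structure of the trace-kernel extension $0\to J_B\to B^{\omega}\to B^{\omega}/J_B\to 0$, together with density properties of the range of $\rho_B$, lets one perturb the map so that it also induces the prescribed affine map on traces. For uniqueness the key input is a stable uniqueness theorem in the spirit of Dadarlat--Eilers: two unital $^*$-homomorphisms with the same $\mathrm{KK}$-class and compatible traces become approximately unitarily equivalent after adding a fixed, sufficiently dominating auxiliary $^*$-homomorphism; one then absorbs the auxiliary summand using $\Z$-stability, via property (SI) and complemented partitions of unity, to upgrade stable approximate unitary equivalence to genuine approximate unitary equivalence.

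The hard part is the uniqueness theorem, and within it the need to control the $\mathrm{KK}$-theoretic picture and the tracial picture simultaneously: matching $\mathrm{KK}$-classes alone does not suffice, and one must also handle the von Neumann algebraic quotient $B^{\omega}/J_B$ using $W^*$-bundle techniques and then glue the $C^*$-level and $W^*$-level statements. This gluing, together with the absorption of the auxiliary summand, is where nuclearity, $\Z$-stability, and simplicity of $B$ all enter essentially, and it is what makes the full proof (as carried out in \cite{classif} building on \cite{EGLN,TWW,GLN1}) long and technical. The purely infinite case of the statement reduces to the Kirchberg--Phillips theorem, so the genuinely new content lies in the stably finite situation.
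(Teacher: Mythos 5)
The paper offers no proof of this statement: it is imported verbatim from \cite{classif} as a black box (the theorem environment is followed immediately by a remark, not a proof), so there is no internal argument to compare yours against. Judged against the actual proof in the cited literature, your sketch captures the correct architecture: classification of objects deduced from classification of unital $^*$-homomorphisms via Elliott's two-sided approximate intertwining, $KK$-existence from the UCT, the trace-kernel extension $0\to J_B\to B^{\omega}\to B^{\omega}/J_B\to 0$ and $W^*$-bundle techniques, stable uniqueness in the style of Dadarlat--Eilers, absorption of the auxiliary summand via property (SI) and complemented partitions of unity, and reduction of the purely infinite case to Kirchberg--Phillips.

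There is, however, one point where your sketch, taken literally, asserts something false, and it is precisely the point that makes \cite{classif} delicate: unital $^*$-homomorphisms $A\to B$ are \emph{not} classified up to approximate unitary equivalence by the Elliott invariant $(K_0,K_1,T,\rho)$. The uniqueness statement you formulate fails in general --- there are unital $^*$-homomorphisms between classifiable $\C$-algebras agreeing on $K_*$ and traces but differing on $K$-theory with coefficients, hence not approximately unitarily equivalent. The existence and uniqueness theorems of \cite{classif} are therefore stated for the finer total invariant $\underline{KT}_u$, consisting of total $K$-theory $\underline{K}(\,\cdot\,)$ with its Bockstein operations, the Hausdorffized unitary algebraic $K_1$-group, traces, and all compatibility pairings. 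To recover the object classification one must then separately show that an isomorphism $\Ell(A)\cong\Ell(B)$ lifts to an isomorphism of total invariants; this uses the UCT to choose a $KK$-equivalence inducing the given maps on $K_*$ together with a compatibility analysis of the extra data (cf.\ \cite[Remark 2.5]{classif}, echoed in Remark \ref{rmk: ElliottvsKTu} of the present paper). Without this lifting step the intertwining argument cannot be run, because the uniqueness half of your existence/uniqueness package is unavailable at the level of $\Ell$. With that correction your outline matches the strategy of the cited proof.
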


\begin{rmk}\label{rmk: ElliottvsKTu}
On the class of $\C$-algebras in the statement of Theorem \ref{thm: Classif}, the order on the $K_0$-group is induced by traces.\footnote{This is a consequence of \cite[Proposition 4.13(ii)]{classif} which follows from \cite{Rordam04} (one can also see \cite[Theorem 1]{GJS00}).} Precisely, non-zero positive elements in $K_0$ are those elements that are strictly positive when evaluated on traces.\ Therefore, if $A$ is a $\C$-algebra as in the statement of Theorem \ref{thm: Classif}, then the invariant $\left(K_0(A), [1_A]_0, K_1(A), T(A), \rho_A\right)$ carries the same information as $\Ell(A)$.\ We will use this observation in the proof of Lemma \ref{lemma: IsomCorner}. 
\end{rmk}

Since we will need to check that certain corners of crossed products by $\mathbb Z$ satisfy the assumptions of the classification theorem, we collect below a number of permanence properties for the hypotheses in the statement of Theorem \ref{thm: Classif}.

\begin{prop}\label{prop: StableIsom}
Let $A,B$ be separable $\C$-algebras such that $A\otimes\mathcal{K}\cong B\otimes\mathcal{K}$. If $A$ is nuclear, $\Z$-stable, and satisfies the UCT, then B is nuclear, $\Z$-stable, and satisfies the UCT. 
\end{prop}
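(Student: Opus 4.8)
The plan is to observe that each of the three properties in the statement — nuclearity, $\Z$-stability, and satisfying the UCT — is a stable invariant, i.e. it passes between a $\C$-algebra and its stabilisation, and is moreover invariant under $*$-isomorphism. Since $A\otimes\mathcal K\cong B\otimes\mathcal K$, it then suffices to note that $A$ has property $P$ iff $A\otimes\mathcal K$ has property $P$ iff $B\otimes\mathcal K$ has property $P$ iff $B$ has property $P$.

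Concretely, I would argue as follows. For nuclearity: nuclearity is preserved under tensoring with $\mathcal K$ (as $\mathcal K$ is nuclear and nuclearity passes to minimal tensor products), and it passes to hereditary subalgebras, hence to $B$ viewed as a corner $pB\otimes\mathcal K\,p$ for a minimal projection $p$; alternatively one can simply cite that nuclearity is a stable property. For $\Z$-stability: since $\Z$ is strongly self-absorbing, $A\otimes\mathcal K$ is $\Z$-stable whenever $A$ is (one has $(A\otimes\mathcal K)\otimes\Z\cong (A\otimes\Z)\otimes\mathcal K\cong A\otimes\mathcal K$), and conversely $\Z$-stability passes from $A\otimes\mathcal K$ to any full hereditary subalgebra, in particular to $B$; this is for instance covered by Toms--Winter type permanence results for $\Z$-stability, or one can invoke \cite[Corollary 3.1]{TW07} on $\Z$-stability passing to hereditary subalgebras together with the isomorphism $B\cong p(B\otimes\mathcal K)p$. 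For the UCT: the UCT class is closed under $KK$-equivalence, and $A$ is $KK$-equivalent to $A\otimes\mathcal K$ (the embedding $a\mapsto a\otimes e_{11}$ induces a $KK$-equivalence), so $A$ satisfies the UCT iff $A\otimes\mathcal K$ does iff $B\otimes\mathcal K$ does iff $B$ does.

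Assembling these three equivalences gives the claim: $B$ is nuclear, $\Z$-stable and satisfies the UCT. I do not expect any genuine obstacle here; the only mild care needed is to phrase the "hereditary subalgebra" or "corner" step correctly (writing $B\cong p(B\otimes\mathcal K)p$ for a rank-one projection $p\in\mathcal K$ and checking this corner is full), and to make sure the permanence statements are cited in the versions that apply to nonunital, merely separable $\C$-algebras — which is exactly the setting here. Since the proof is a short assembly of standard permanence properties, I would keep it to a few lines with the appropriate citations rather than reproving any of them.
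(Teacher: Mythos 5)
Your proposal is correct and follows essentially the same route as the paper: both arguments transfer each property to $B\otimes\mathcal K\cong A\otimes\mathcal K$ and then back down to $B$ via the hereditary subalgebra/corner identification, citing the same permanence results (nuclearity for hereditary subalgebras, \cite[Corollary 3.1]{TW07} for $\Z$-stability, and closure of the UCT class under stable isomorphism as in \cite[22.3.5 (a)]{blackadar}). No issues.
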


\begin{proof}
We first note that $B\otimes\mathcal{K}\cong A\otimes\mathcal{K}$ is nuclear and $\Z$-stable. Since nuclearity passes to hereditary subalgebras, $B$ is nuclear. Moreover, $B$ is $\Z$-stable by \cite[Corollary 3.1]{TW07}. Finally, the UCT is closed under stable isomorphisms (\cite[22.3.5 (a)]{blackadar}), so $B$ satisfies the UCT.
\end{proof}

\begin{prop}\label{prop: PermanenceProp}
Let $B$ be a separable $\C$-algebra, $\gamma$ be an automorphism of $B$, and $e\in B$ be a full projection in $B\rtimes_\gamma\mathbb{Z}$. Then the following hold. 

\begin{enumerate}[label=\textit{(\roman*)}]
\item The corner $e\left(B\rtimes_\gamma\mathbb{Z}\right)e$ is separable and unital.\label{item: PP1}
\item If $B\rtimes_\gamma\mathbb{Z}$ is simple, then $e\left(B\rtimes_\gamma\mathbb{Z}\right)e$ is simple.\label{item: PP2}
\item If $B$ is nuclear, then $e\left(B\rtimes_\gamma\mathbb{Z}\right)e$ is nuclear.\label{item: PP3}
\item If $B$ satisfies the UCT, then so does $e\left(B\rtimes_\gamma\mathbb{Z}\right)e$.\label{item: PP4}
\item If $B\rtimes_\gamma\mathbb Z$ is $\Z$-stable, then so is $e\left(B\rtimes_\gamma\mathbb{Z}\right)e$.\label{item: PP5}
\end{enumerate}
\end{prop}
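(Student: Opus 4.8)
\emph{Proof proposal.} The plan is to factor each of the five assertions through the crossed product $C := B\rtimes_\gamma\mathbb{Z}$ itself: first verify that $C$ enjoys the relevant property, then transport it to the corner $eCe$ using that $eCe$ is a hereditary subalgebra of $C$ and, since $e$ is full, is moreover stably isomorphic to $C$ (so that $eCe\otimes\mathcal{K}\cong C\otimes\mathcal{K}$). Part \ref{item: PP1} is then immediate: $B$ separable implies $C_c(\mathbb{Z},B)$ is separable, hence so is its completion $C$, and separability is inherited by the subalgebra $eCe$; unitality holds because the projection $e$ is a unit for $eCe$.

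For \ref{item: PP2}, recall that a nonzero projection in a simple $\C$-algebra is automatically full and that a full corner is Morita equivalent to the ambient algebra, so $eCe$ and $C$ have isomorphic ideal lattices; since $C$ is simple, so is $eCe$. (Directly: if $J$ is a nonzero closed two-sided ideal of $eCe$, then the closed ideal of $C$ that it generates is nonzero, hence equals $C$ by simplicity, and intersecting back with the hereditary subalgebra $eCe$ returns $J=eCe$.)

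For \ref{item: PP3} and \ref{item: PP4} we use that crossed products by $\mathbb{Z}$ preserve nuclearity (as $\mathbb{Z}$ is amenable) and preserve membership in the UCT class. For the latter, the Pimsner--Voiculescu Toeplitz extension $0\to B\otimes\mathcal{K}\to\mathcal{T}_\gamma\to C\to 0$ has middle term $KK$-equivalent to $B$, so $C$ lies in the bootstrap class whenever $B$ does, by closure of that class under stable isomorphism, $KK$-equivalence, and extensions. Thus $C$ is nuclear, respectively satisfies the UCT, whenever $B$ does; nuclearity then descends to the hereditary subalgebra $eCe$, and the UCT descends to $eCe$ because $eCe$ is stably isomorphic to $C$ and the UCT class is closed under stable isomorphism (\cite[22.3.5 (a)]{blackadar}), exactly as in the proof of Proposition \ref{prop: StableIsom}.

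Finally, for \ref{item: PP5}, if $C$ is $\Z$-stable then, $C$ being separable by \ref{item: PP1} and $eCe$ being a hereditary subalgebra, $\Z$-stability passes to $eCe$ by \cite[Corollary 3.1]{TW07}. Everything here is standard except the input used in \ref{item: PP4}, namely that the UCT/bootstrap class is stable under crossed products by $\mathbb{Z}$; I would invoke the Pimsner--Voiculescu Toeplitz extension together with the known closure properties of the bootstrap class rather than reprove it, and this is the only place a non-trivial tool is needed. One should also read the hypothesis ``$e\in B$ is a full projection in $B\rtimes_\gamma\mathbb{Z}$'' as saying that $e$, viewed inside $C$, is a full projection of $C$, so that $eCe$ is a genuine full corner and the arguments above apply verbatim.
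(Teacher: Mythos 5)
Your proposal is correct and follows essentially the same route as the paper: separability and unitality are immediate, simplicity and nuclearity and $\Z$-stability pass from $B\rtimes_\gamma\mathbb{Z}$ to the (hereditary, full) corner, and the UCT is transported via closure under crossed products by $\mathbb{Z}$ followed by Brown's stable isomorphism of the full corner with the ambient algebra. The only cosmetic difference is that you sketch the Pimsner--Voiculescu Toeplitz-extension argument for why the bootstrap class is closed under crossed products by $\mathbb{Z}$, whereas the paper simply cites \cite[22.3.5 (g)]{blackadar} for this fact.
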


\begin{proof}
Note that $e\left(B\rtimes_\gamma\mathbb{Z}\right)e$ is unital as $e$ is a projection, and it is separable since $B$ is separable and $\mathbb{Z}$ is a countable discrete group. If $B\rtimes_\gamma\mathbb{Z}$ is simple and $e$ is full, then $e\left(B\rtimes_\gamma\mathbb{Z}\right)e$ is simple. If $B$ is nuclear, then $B\rtimes_\gamma\mathbb{Z}$ is nuclear by \cite[Theorem 4.2.4]{BO}. Since nuclearity passes to hereditary subalgebras, $e\left(B\rtimes_\gamma\mathbb{Z}\right)e$ is nuclear. Suppose now that $B$ satisfies the UCT. Since the $\UCT$ is closed under crossed products by $\mathbb{Z}$ (\cite[22.3.5 (g)]{blackadar}), $B\rtimes_\gamma\mathbb{Z}$ satisfies the $\UCT$. Furthermore, $B\rtimes_\gamma\mathbb{Z}$ and $e\left(B\rtimes_\gamma\mathbb{Z}\right)e$ are stably isomorphic (see \cite{Brown77}) and the $\UCT$ is closed under stable isomorphisms (\cite[22.3.5 (a)]{blackadar}). Hence,  $e\left(B\rtimes_\gamma\mathbb{Z}\right)e$ satisfies the $\UCT$. Since $\Z$-stability passes to hereditary subalgebras (\cite[Corollary 3.1]{TW07}) and $B\rtimes_\gamma\mathbb{Z}$ is $\Z$-stable, it follows that $e\left(B\rtimes_\gamma\mathbb{Z}\right)e$ is $\Z$-stable. 
\end{proof}

Furthermore, Theorem \ref{thm: Classif} is complemented by a range-of-the-invariant result from \cite{InvRangeElliott}. Before stating the result, we need to record a few notions regarding ordered abelian groups.

\subsection{Ordered groups and the range of the invariant}

We refer the reader to \cite{goodearlbook} for a detailed account on the content of this subsection.

An ordered abelian group $(G,G_+)$ is called \emph{unperforated} if for any $n\in\mathbb{N}$ and any $g\in G$ such that $ng\in G_+$, then $g\in G_+$.\ Moreover, an ordered abelian group $(G,G_+)$ is called \emph{weakly unperforated} if for any $n\in\mathbb{N}$ and any $g\in G$ such that $ng\in G_+\setminus \{0\}$, then $g\in G_+\setminus \{0\}$.\ Also, $(G,G_+)$ is said to have the \emph{Riesz interpolation property} if for any $g_1,g_2,h_1,h_2\in G$ such that $g_i\leq h_j$ for any $i,j=1,2$ there exists $z\in G$ such that $g_i\leq z\leq h_j$ for any $i,j=1,2$.\ If all inequalities are replaced by strict inequalities, then $(G,G_+)$ is said to satisfy the \emph{strong Riesz interpolation property}. An ordered abelian group $(G,G_+)$ is said to have the \emph{Riesz decomposition property} if for any $g,h_1,h_2 \in G_+$ such that $g \leq  h_1 + h_2$, there exist $g_1,g_2 \in G_+$ such
that $g = g_1 + g_2$ and $g_j \leq h_j$ for each $j=1,2$.

Recall also that a countable ordered abelian group $(G, G_+)$ is called a \emph{dimension group} if it is isomorphic to the inductive limit of a sequence

\[
\begin{tikzcd}
\mathbb{Z}^{r_1} \ar{r}{\alpha_1} & \mathbb{Z}^{r_2} \ar{r}{\alpha_2} & \mathbb{Z}^{r_3} \ar{r}{\alpha_3} & \ldots,
\end{tikzcd}
\] for some natural numbers $r_n$, some positive group homomorphisms $\alpha_n$, where $\mathbb{Z}^r$ is equipped with its standard ordering given by $$(\mathbb{Z}^r)_+=\{(x_1,x_2,\ldots, x_r) : x_j\geq 0\}.$$ Using a classical theorem of Effros, Handelman, and Shen (\cite[Theorem $2.2$]{DimGroup80}), a countable ordered abelian group is a dimension group if and only if it is \emph{unperforated} and has the \emph{Riesz interpolation property}.

Recall that $I$ is an \emph{order ideal} of $(G,G_+)$ if $I$ is a subgroup of $G$ such that $$I=\left(I\cap G_+\right)-\left(I\cap G_+\right)$$ and  $$\text{if}\ 0\leq y\leq x \ \text{in} \ G_+\ \text{and} \ x\in I,\ \text{then}\ y\in I.$$
Moreover, an ordered abelian group $(G,G_+)$ is called \emph{simple} if any non-zero positive element is an order unit.

In Section \ref{sect: ProofA}, we will often use some of the properties of ordered groups defined above in the case when the group $G$ is a priori graded. To avoid any confusion, we remark that the meaning of these properties is unchanged. In particular, $G=G_0\oplus G_1$ is a \emph{graded ordered group} if it is graded and there exists a distinguished subset $G_+$ such that $(G,G_+)$ is an ordered group in the usual sense.

We can now state the range-of-the-invariant result in \cite{InvRangeElliott}. The theorem below is folklore (see also \cite[Remark 2.5]{classif} stated in terms of the $KT_u$-invariant) and it is implicitly contained in \cite{InvRangeElliott} as all of the inductive limit constructions provided in \cite{InvRangeElliott} are simple and have finite nuclear dimension, and hence are $\Z$-stable. We will use this result in the proof of Theorem \ref{thm:ExistenceTracial}.

\begin{theorem}[\cite{InvRangeElliott}]\label{thm: InvRange}
Let $(G,G_+,v)$ be a simple, countable, ordered abelian group which is weakly unperforated and has distinguished order unit $v$, $H$ be a countable abelian group, and $X$ be a compact, metrisable Choquet simplex together with a weakly unperforated pairing $\rho:G\times X\to \mathbb{R}$ which determines $G_+$. Suppose also that the pairing $\rho$ induces a surjection from $X$ onto the positive homomorphisms on $(G,G_+)$ which send $v$ to $1$. Then there exists a simple, separable, unital, nuclear, $\Z$-stable $\C$-algebra $A$ satisfying the UCT such that $$\Ell(A)\cong (G,G_+,v, H, X, \rho).$$ 
\end{theorem}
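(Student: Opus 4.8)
The plan is to reduce the statement to the genuine range-of-the-invariant theorem proved in \cite{InvRangeElliott} for AH-type inductive limits, and then upgrade the conclusion using the classification theorem and the known fact that the constructions in \cite{InvRangeElliott} land in the $\Z$-stable regime. First I would note that the data $(G,G_+,v,H,X,\rho)$ is exactly an abstract Elliott invariant: $(G,G_+,v)$ is a simple, weakly unperforated, countable ordered abelian group with order unit, $H$ is the prescribed $K_1$-group, $X$ is a metrisable Choquet simplex, and the pairing $\rho\colon G\times X\to\mathbb R$ both determines $G_+$ (so that positivity in $G$ is detected by strict positivity of the pairing, matching Remark \ref{rmk: ElliottvsKTu}) and realises $X$ as the state space $S(G,G_+,v)$ via the induced surjection $X\twoheadrightarrow S(G,G_+,v)$. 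The weak unperforation hypothesis on $\rho$ is the compatibility condition ensuring this tuple is realisable. Elliott's range result \cite{InvRangeElliott} produces a separable, nuclear, unital $\C$-algebra $A_0$, built as an inductive limit of homogeneous (or subhomogeneous) building blocks, with $\Ell(A_0)\cong(G,G_+,v,H,X,\rho)$.

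Next I would invoke the observation recorded just before the theorem statement: every inductive limit construction in \cite{InvRangeElliott} is simple and has finite nuclear dimension (the building blocks are homogeneous $\C$-algebras over finite-dimensional spaces with controlled dimension, and simplicity of $G$ forces the limit to be simple), hence $A_0$ is $\Z$-stable by the Toms--Winter type results (finite nuclear dimension $\Rightarrow$ $\Z$-stability for simple, separable, unital, nuclear $\C$-algebras). Moreover $A_0$ satisfies the UCT since it is an inductive limit of type I $\C$-algebras, which satisfy the UCT, and the UCT is closed under inductive limits. Thus $A_0$ is a simple, separable, unital, nuclear, $\Z$-stable $\C$-algebra satisfying the UCT with the prescribed Elliott invariant, and taking $A:=A_0$ completes the argument. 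If one instead wishes to start from a construction not a priori known to be $\Z$-stable, one can tensor with $\Z$: $A_0\otimes\Z$ is $\Z$-stable, still simple, separable, nuclear, UCT, and $\Ell(A_0\otimes\Z)\cong\Ell(A_0)$ because tensoring a simple unital $\C$-algebra by $\Z$ leaves the Elliott invariant unchanged (this is where weak unperforation of $G$ and the order being determined by traces is used: the canonical map $\Ell(A_0)\to\Ell(A_0\otimes\Z)$ is an isomorphism precisely for such invariants).

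The main obstacle, and the only genuine content beyond bookkeeping, is verifying that the tuple $(G,G_+,v,H,X,\rho)$ as axiomatised here really falls within the scope of what \cite{InvRangeElliott} realises --- i.e. matching the hypotheses "weakly unperforated pairing determining $G_+$" and "$\rho$ induces a surjection onto the state space" with the precise input format of that paper (which is typically phrased for unperforated dimension-group-type data or for weakly unperforated groups with Riesz-type properties). Since $G$ here is only assumed weakly unperforated rather than unperforated, one should check that Elliott's construction, or a routine modification of it, accommodates this; this is exactly what the parenthetical remark in the excerpt asserts is "folklore" and "implicitly contained" in \cite{InvRangeElliott}, possibly combined with \cite[Remark 2.5]{classif}. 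I would therefore spend the bulk of the write-up citing the relevant parts of \cite{InvRangeElliott} and \cite{classif} and explaining why weak unperforation suffices, rather than reproving any range result.
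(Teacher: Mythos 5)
Your proposal follows essentially the same route as the paper: invoke the range-of-the-invariant result of \cite{InvRangeElliott}, obtain $\Z$-stability either by tensoring each inductive-limit stage with $\Z$ or from simplicity plus finite nuclear dimension of the building blocks via \cite[Corollary 8.7]{T14}, and get the UCT from closure under inductive limits. The only difference is that the paper cites the \emph{stable} version of Elliott's range theorem (producing a stable algebra $B$ with invariant $(G,G_+,H,\tilde X,\tilde\rho)$, where $\tilde X$ is the cone over $X$) and then cuts down by a full projection $p$ with $[p]_0=v$ to position the order unit, whereas you assume the range theorem directly outputs a unital algebra with the correct class of the unit; this corner cut-down is routine but should be made explicit.
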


\begin{proof}
Let $(G,G_+,v, H, X, \rho)$ be as in the statement of the theorem. Then there exists a simple separable stable nuclear $\C$-algebra $B$ such that $\Ell(B)\cong (G,G_+,H, \tilde{X},\tilde{\rho})$, where $\tilde{X}$ is the cone with base $X$ and $\tilde{\rho}$ is the unique extension of the pairing $\rho$ (\cite[Theorem 5.2.3.2]{InvRangeElliott}). Since the $\C$-algebra $B$ is constructed as an inductive limit, one can simply tensor with $\Z$ at each stage to assume that $B$ is $\Z$-stable, or notice that $B$ is simple and has finite nuclear dimension as each building block has finite nuclear dimension (\cite[Proposition 2.3]{nucdim}). Thus, $B$ is $\Z$-stable by \cite[Corollary 8.7]{T14}. Moreover, the UCT is preserved by inductive limits (see \cite[22.3.5 (e)]{blackadar}), so $B$ satisfies the UCT. 

If $p\in B$ is a projection such that $[p]_0=v$ in $K_0(B)$, then we take $A=pBp$. Since $B$ is simple, $p$ is a full projection, so $A$ is simple separable and unital (see \cite{Brown77}). Moreover, $A$ is nuclear, $\Z$-stable, and it satisfies the UCT by Proposition \ref{prop: StableIsom}. Finally, it is immediate to see that $\Ell(A)\cong (G,G_+,v, H, X, \rho).$
\end{proof}

\subsection{Elliott's classification of A$\mathbb{T}$-algebras of real rank zero}\label{sect: AT}

This subsection contains a summary of Elliott's classification of (possibly non-simple) A$\mathbb{T}$-algebras of real rank zero from \cite{elliottclassifRR0}. 

Let $A$ be a unital $\C$-algebra, $K_*(A)$ be the graded group $K_0(A)\oplus K_1(A)$, equipped with the order 
\begin{equation*}
K_*(A)_+=\left\{\begin{array}{c}
([p]_0,[u\oplus (1_n-p)]_1): p\in \mathcal{P}(M_n(A)) \ \text{for some}\ n\in\mathbb N,\\ u\in\mathcal{U}(pM_n(A)p)
\end{array}
\right\},
\end{equation*}
and $$D_*(A)=\{([p]_0,[u\oplus (1-p)]_1)\in K_*(A)_+\colon p\in A\}.$$ If $A$ is nonunital, then define $$K_*(A)_+=K_*(A)\cap K_*(\tilde{A})_+,$$ and $$D_*(A)=K_*(A)\cap D_*(\tilde{A}),$$ where $\tilde{A}$ is the minimal unitisation of $A$. If $A$ is an A$\mathbb{T}$-algebra of real rank zero, consider the pair $(K_*(A),K_*(A)_+)$. Since $A$ also has stable rank one, the pair $(K_*(A),K_*(A)_+)$ is a graded ordered group (\cite[Theorem 3.2]{elliottclassifRR0}). Then, the graded group $K_*(A)$ together with the \emph{graded dimension range} $D_*(A)$ determine the pair $(K_*(A),K_*(A)_+)$ and constitutes the invariant used by Elliott in his classification of A$\mathbb{T}$-algebras of real rank zero. If $A$ is simple, the invariant $(K_*(A),K_*(A)_+)$ reduces to the usual $K$-theory triple $(K_0(A),K_0(A)_+,K_1(A))$ appearing in the Elliott invariant of a nonunital $\C$-algebra.

We record the following classification of isomorphisms between A$\mathbb{T}$-algebras of real rank zero from \cite{elliottclassifRR0}. Note that the statement does not appear in \cite{elliottclassifRR0}, but, as observed in \cite[Remark 7.3]{elliottclassifRR0}, the proof of \cite[Theorem 7.1]{elliottclassifRR0} not only produces a classification of A$\mathbb{T}$-algebras of real rank zero, but of isomorphisms between them.

\begin{theorem}[{\cite{elliottclassifRR0}}]\label{thm: ElliottAT}
Let $A$ and $B$ be A$\mathbb{T}$-algebras of real rank zero.\ Then $A$ is isomorphic to $B$ if and only if there is a graded group isomorphism $\alpha:K_*(A)\to K_*(B)$ such that $\alpha(D_*(A))=D_*(B)$. Moreover, for each such $\alpha$, there exists an isomorphism $\phi:A\to B$ such that $K_*(\phi)=\alpha$.
\end{theorem}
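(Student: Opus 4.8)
The plan is to handle the two implications of the biconditional separately and to obtain the harder direction, together with the ``moreover'' clause, directly from the analysis already carried out by Elliott in \cite{elliottclassifRR0}.

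For the ``only if'' direction I would argue purely by functoriality of $K$-theory. Given a $\C$-algebra isomorphism $\phi\colon A\to B$, the induced map $K_*(\phi)$ is a graded group isomorphism; since $\phi$ and each amplification $\phi\otimes\id_{M_n}$ carry projections to projections and unitaries to unitaries, $K_*(\phi)$ preserves the positive cones $K_*(\cdot)_+$, and since $\phi$ in particular sends projections of $A$ to projections of $B$ and unitaries of a corner $pAp$ to unitaries of $\phi(p)B\phi(p)$, it sends $D_*(A)$ into $D_*(B)$. Running the same argument for $\phi^{-1}$ gives $K_*(\phi)(D_*(A))=D_*(B)$. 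In the nonunital case I would first pass to the minimal unitisations, where $\tilde\phi$ respects the canonical splittings of $K_*(\tilde A)$ and $K_*(\tilde B)$, and then intersect back to recover the statement for $A$ and $B$.

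For the ``if'' direction together with the ``moreover'' clause, the key observation is that a graded group isomorphism $\alpha\colon K_*(A)\to K_*(B)$ with $\alpha(D_*(A))=D_*(B)$ is exactly an isomorphism of the invariant $(K_*(\cdot),D_*(\cdot))$ used in \cite[Theorem 7.1]{elliottclassifRR0}, so one may invoke that theorem. Concretely, writing $A=\varinjlim A_n$ and $B=\varinjlim B_n$ as inductive limits of finite direct sums of matrix algebras over $C(\mathbb{T})$, the proof runs the Elliott intertwining argument: an \emph{existence} step produces, for each building block $A_n$ and each sufficiently large $m$, a $^*$-homomorphism $A_n\to B_m$ whose action on $(K_*,D_*)$ matches $\alpha$ through the structure maps up to a prescribed tolerance; a \emph{uniqueness} step shows that two $^*$-homomorphisms between circle algebras agreeing on this invariant become approximately unitarily equivalent deeper in the system; and these are assembled into a two-sided approximate intertwining whose limit is an isomorphism $\phi\colon A\to B$ with $K_*(\phi)=\alpha$. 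As pointed out in \cite[Remark 7.3]{elliottclassifRR0}, the proof of \cite[Theorem 7.1]{elliottclassifRR0} yields precisely this statement about realising isomorphisms of the invariant, not merely the bare classification, so in practice I would simply cite Theorem 7.1 and Remark 7.3 of \cite{elliottclassifRR0}.

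The hard part is the existence step, and it is exactly there that real rank zero is indispensable: realising the prescribed $K_1$-data (winding numbers) between circle algebras compatibly with the $K_0$-order and the dimension range requires the connecting maps to have sufficiently spread-out spectra on the circle, which real rank zero supplies through the abundance of projections and the availability of a standard form for the building maps. A further point that must be checked --- and which likewise relies on real rank zero --- is that the trace-free invariant $(K_*,D_*)$ is genuinely complete, the tracial data being recoverable from the ordered $K_0$-group and the dimension range, so that agreement on the invariant through the system forces the finite-stage maps to be compatible with traces in the limit. Both of these are established in \cite{elliottclassifRR0}, so for the present paper invoking Theorem 7.1 and Remark 7.3 there suffices.
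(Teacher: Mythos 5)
Your proposal is correct and matches the paper's treatment: the paper gives no independent proof but, exactly as you do, records the easy direction as functoriality of $K_*$ and defers the converse together with the ``moreover'' clause to \cite[Theorem 7.1]{elliottclassifRR0} via the observation in \cite[Remark 7.3]{elliottclassifRR0} that Elliott's intertwining argument classifies isomorphisms of the invariant, not just the algebras. Your additional sketch of the existence/uniqueness steps and the role of real rank zero is accurate background but not something the paper reproves.
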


We also recall the range-of-the-invariant result in \cite{elliottclassifRR0} for A$\mathbb{T}$-algebras of real rank zero.

\begin{theorem}[{\cite[Theorem 8.1]{elliottclassifRR0}}]
Let $G_* = G_0 \oplus G_1$ be a graded ordered group which is the inductive limit of a sequence $G_{*1} \to G_{*2} \to \ldots$ of graded ordered groups such that each $G_{*i} = G_{0i}\oplus G_{1i}$ is the direct sum of finitely many basic building blocks of the form $\mathbb{Z}\oplus \mathbb{Z}$ with the order determined by strict positivity in the first component. If $G_*$ has the Riesz interpolation property, then there exists a stable A$\mathbb{T}$-algebra of real rank zero $A$ such that $K_*(A)\cong G_*$ and $K_*(A)_+\cong (G_*)_+$. 
\end{theorem}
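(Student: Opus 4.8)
The strategy is to realise $(G_*,(G_*)_+)$ as the graded ordered $K$-theory of an inductive limit of stabilised circle algebras, and then to use the Riesz interpolation hypothesis to force the limit to have real rank zero. The basic building block $\mathbb{Z}\oplus\mathbb{Z}$, ordered so that an element is positive exactly when its first coordinate is strictly positive (or the element is $0$), is $(K_*(\mathcal{C}),K_*(\mathcal{C})_+)$ for $\mathcal{C}=C(\mathbb{T})\otimes\mathcal{K}$: one has $K_0(\mathcal{C})\cong\mathbb{Z}\cong K_1(\mathcal{C})$, a short computation with projections and unitaries over $\widetilde{\mathcal{C}}$ gives $K_*(\mathcal{C})_+=\{(m,k):m\geq 1\}\cup\{(0,0)\}$, and stability of $\mathcal{C}$ forces $D_*(\mathcal{C})=K_*(\mathcal{C})_+$. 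Passing to finite direct sums, for each $i$ I would fix a finite direct sum $A_i$ of copies of $\mathcal{C}$ and a graded order isomorphism $K_*(A_i)\cong G_{*i}$ carrying $D_*(A_i)$ onto $(G_{*i})_+$.

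Next I would realise the connecting maps. A connecting homomorphism $G_{*i}\to G_{*(i+1)}$ is a graded positive group homomorphism, hence block-diagonal: its $K_0$-part is multiplication by a non-negative integer matrix $(a_{jl})$ of multiplicities, and its $K_1$-part is an integer matrix $(c_{jl})$ whose $(j,l)$-entry vanishes whenever $a_{jl}=0$ (probing positivity with elements supported on a single building block forces exactly this). Such a map is induced by an honest $^*$-homomorphism $\phi_i\colon A_i\to A_{i+1}$: on the $(j,l)$-block one takes the zero map if $a_{jl}=0$, and otherwise a diagonal $^*$-homomorphism $C(\mathbb{T})\to M_{a_{jl}}(C(\mathbb{T}))$ built from one degree-$c_{jl}$ covering $f(z)\mapsto f(z^{c_{jl}})$ (contributing $K_1$-multiplicity $c_{jl}$) together with $a_{jl}-1$ point-evaluations $f\mapsto f(1)$ (contributing the rest of the $K_0$-multiplicity), then amplifies and stabilises. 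The inductive limit $A:=\varinjlim(A_i,\phi_i)$ is a stable A$\mathbb{T}$-algebra, and since $K$-theory is continuous and circle algebras and their inductive limits have stable rank one (so that the order and the dimension range of the limit are the corresponding inductive limits of the stages), $(K_*(A),K_*(A)_+)\cong(G_*,(G_*)_+)$ as graded ordered groups.

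\textbf{The main obstacle is real rank zero.} With arbitrary choices of the $\phi_i$ the construction above yields an A$\mathbb{T}$-algebra with the correct invariant but a priori without real rank zero: circle algebras contain very few projections, and a single building block already fails Riesz interpolation. This is exactly where the hypothesis on $G_*$ enters. I would use the standard criterion for an inductive limit of circle algebras to have a real-rank-zero limit, namely that between suitable stages each block of the connecting map splits as a point-evaluation part plus a remainder absorbing all of the winding, so that the point-evaluation parts supply enough projections; and observe that the Riesz interpolation property of $G_*=\varinjlim G_{*i}$ is precisely what allows one, after telescoping, to choose the $\phi_i$ meeting this condition — whenever finitely many classes at a stage have $K_0$-parts that become comparable further along, interpolation supplies an intermediate class which one realises as the class of a projection appearing in the point-evaluation part of a connecting $^*$-homomorphism, all while respecting the grading and the (now redundant, full) dimension ranges. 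Carrying out this adjustment of the inductive system — retaining the prescribed effect on $K_*$ while threading in the necessary point-evaluations — is the technical heart of the argument.

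Both demands on the system, that $K_*(\phi_i)$ be the prescribed map and that the $\phi_i$ eventually carry the point-evaluations needed for real rank zero, can be met simultaneously, since many $^*$-homomorphisms induce the same $K$-theory map and one may telescope the given sequence at will; a diagonalisation of these two requirements (or an Elliott-type approximate intertwining) produces a single inductive system of stabilised circle algebras whose limit $A$ is a stable A$\mathbb{T}$-algebra of real rank zero with $K_*(A)\cong G_*$ and $K_*(A)_+\cong(G_*)_+$, as claimed. Note that one cannot bypass this construction by first building an arbitrary A$\mathbb{T}$-algebra with the right $K$-theory and then quoting Theorem \ref{thm: ElliottAT}, since that classification theorem presupposes a real-rank-zero model.
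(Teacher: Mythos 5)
The paper does not prove this statement: it is quoted verbatim as a citation of Elliott's Theorem 8.1 in \cite{elliottclassifRR0}, so there is no internal proof to compare your argument against. Your outline does follow what I take to be the intended route: realise each basic building block as $K_*\bigl(C(\mathbb{T})\otimes\mathcal{K}\bigr)$ with its graded dimension range, realise the positive graded connecting maps by $^*$-homomorphisms assembled from finite coverings and point evaluations, and use the Riesz interpolation hypothesis to force real rank zero. The routine parts are correct: the computation of $K_*(\mathcal{C})_+$ and $D_*(\mathcal{C})$ for $\mathcal{C}=C(\mathbb{T})\otimes\mathcal{K}$, the observation that a positive graded homomorphism is block-diagonal with $c_{jl}=0$ whenever $a_{jl}=0$, the realisation of each block by one degree-$c_{jl}$ covering plus $a_{jl}-1$ point evaluations, the continuity of $(K_*,K_{*+},D_*)$ under inductive limits of stable-rank-one algebras, and the identification of the mechanism for real rank zero (densely distributed point evaluations give small eigenvalue variation, hence approximability of self-adjoint elements by ones with finite spectrum).

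There is, however, a genuine gap at exactly the step you yourself label ``the technical heart,'' and that step is essentially the entire content of the theorem. You assert that the Riesz interpolation property of $G_*$ ``is precisely what allows one, after telescoping, to choose the $\phi_i$'' so that the composite connecting maps carry enough densely distributed point evaluations, but no argument is given, and the one-line mechanism you propose (interpolation supplies an intermediate class which one realises as a projection in the point-evaluation part) does not explain how an interpolating element of the \emph{limit} group is converted into a factorisation of a \emph{connecting map} through point evaluations compatibly with the prescribed $K_1$-data, the grading, and the (possibly complicated) ideal structure. Two concrete obstructions: first, when $c_{jl}\neq 0$ at most $a_{jl}-1$ of the $a_{jl}$ summands can be point evaluations, so real rank zero requires the composite $K_0$-multiplicities to grow without bound along every thread on which $K_1$ survives; the given presentation of $G_*$ need not have this growth, and telescoping does not change composite multiplicities, so one must construct a genuinely different presentation of $(G_*,(G_*)_+)$ by building blocks (an Effros--Handelman--Shen-type re-presentation lemma driven by interpolation). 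Second, one must also arrange that the point evaluations accumulated in the composites become $\delta$-dense in every circle for every $\delta>0$, uniformly over the blocks, which is a separate bookkeeping argument. As written, the proposal is an accurate plan rather than a proof; to complete it you would need to prove the re-presentation lemma (this is the bulk of Elliott's Section 8, and is closely related to the Blackadar--Bratteli--Elliott--Kumjian and Blackadar--Dadarlat--R{\o}rdam analyses of real rank zero for inductive limits of circle algebras) or cite it explicitly.
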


We record the following folklore result relating traces and positive homomorphisms on the $K_0$-group of A$\mathbb{T}$-algebras of real rank zero.

\begin{prop}\label{thm: TracesAT}
Let $B$ be a separable, stable A$\mathbb{T}$-algebra of real rank zero.\ Then the canonical map from the densely defined lower semicontinous traces on $B$ to positive group homomorphisms $K_0(B)\to \mathbb R$ is an affine homeomorphism.
\end{prop}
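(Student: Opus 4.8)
The plan is to put the topology of pointwise convergence on both cones --- pointwise convergence on $K_0(B)$ for the positive homomorphisms, and pointwise convergence on the Pedersen ideal $\mathrm{Ped}(B)$ for the densely defined lower semicontinuous traces --- and to check that the canonical map $\Phi\colon\tau\mapsto\big([p]_0-[q]_0\mapsto\tau(p)-\tau(q)\big)$ is a well-defined, affine, continuous bijection whose inverse is continuous. Well-definedness and continuity of $\Phi$ are routine: as $B$ is stable, every class in $K_0(B)$ is a difference of classes of projections in $B$; as every A$\mathbb{T}$-algebra has stable rank one, $K_0(B)$ has cancellation, so $\Phi(\tau)$ is a well-defined homomorphism, visibly positive and affine in $\tau$; and since projections lie in $\mathrm{Ped}(B)$, pointwise convergence $\tau_i\to\tau$ yields $\Phi(\tau_i)\to\Phi(\tau)$.

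Injectivity is where real rank zero is used. If $\Phi(\tau)=\Phi(\tau')$ then $\tau$ and $\tau'$ agree on every projection of $B$. By real rank zero, every positive element of $\mathrm{Ped}(B)$ is a norm-limit of positive linear combinations of projections from $\mathrm{Ped}(B)$; moreover, using real rank zero together with the fact that $\mathrm{Ped}(B)$ is a hereditary dense ideal, a given such element and the projections occurring in its approximants may be taken to lie in $eBe$ for a single projection $e\in\mathrm{Ped}(B)$. On $eBe$ both traces are bounded (by $\|\cdot\|\,\tau(e)$ and $\|\cdot\|\,\tau'(e)$, with $\tau(e),\tau'(e)<\infty$), hence norm-continuous, so they agree on $\mathrm{Ped}(B)_{\mathrm{sa}}$ and therefore $\tau=\tau'$. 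The same approximation --- with the bound $\tau_i(e)=\Phi(\tau_i)([e]_0)$ controlled along a convergent net --- gives continuity of $\Phi^{-1}$, once surjectivity is available: convergence of $\Phi(\tau_i)$ on $[e]_0$ and on the classes of the finitely many projections in an approximant of a fixed $a\in\mathrm{Ped}(B)$ forces $\tau_i(a)\to\tau(a)$.

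The substantive point is surjectivity, where the A$\mathbb{T}$-structure enters. Write $B=\varinjlim(B_n,\phi_n)$ with each $B_n$ a finite direct sum of (stabilised) matrix algebras over $C(\mathbb{T})$. By the standard functoriality of traces and of $K$-theory under inductive limits, $\Phi$ is the inverse limit of the natural maps $\Phi_n\colon T(B_n)\to\Hom_+(K_0(B_n),\mathbb{R})$, with bonding maps induced by $\phi_n$ and by $K_0(\phi_n)$. Fix $g\in\Hom_+(K_0(B),\mathbb{R})$, corresponding to a compatible family $(g_n)$. On a single building block $M_k(C(\mathbb{T}))$ (or its stabilisation) one has $K_0\cong\mathbb{Z}$ with positive cone $\mathbb{Z}_{\geq 0}$, and a positive homomorphism to $\mathbb{R}$ sending the generator to $\lambda\geq 0$ is induced by any positive Radon measure on $\mathbb{T}$ of total mass $\lambda$. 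Hence $F_n:=\Phi_n^{-1}(g_n)$ is nonempty, and --- being a finite product, over the direct summands of $B_n$, of weak-$*$-compact sets of positive measures of prescribed total mass --- is compact. The bonding maps restrict to continuous maps $F_{n+1}\to F_n$, so $\varprojlim F_n$ is nonempty, being an inverse limit of nonempty compact Hausdorff spaces; any element of it, regarded inside $\varprojlim T(B_n)=T(B)$, is a densely defined lower semicontinuous trace $\tau$ on $B$ with $\Phi(\tau)=g$.

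I expect the main obstacle to be technical rather than conceptual: carefully justifying the identification $T(B)=\varprojlim T(B_n)$ for possibly unbounded traces, and the assertion that real rank zero forces such a trace to be determined by --- and continuous in --- its values on the projections of $\mathrm{Ped}(B)$. Both reduce to standard facts about Pedersen ideals, real rank zero, and functoriality of the cone of lower semicontinuous traces, but they are the places where one must be attentive in the non-unital, unbounded setting.
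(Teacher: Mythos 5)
Your proposal is correct in outline but takes a genuinely different route from the paper. The paper's proof of this proposition is a citation: it invokes the classical correspondence, valid for $\C$-algebras of real rank zero, between lower semicontinuous (quasi)traces and positive homomorphisms on $K_0$ (Goodearl, Blackadar--Handelman), together with the fact that every lower semicontinuous extended quasitrace on an A$\mathbb{T}$-algebra is a trace. Your argument is instead a self-contained construction from the inductive-limit presentation: surjectivity via an inverse limit of nonempty compact fibres of positive measures on $\mathbb{T}$ of prescribed mass over each finite stage, and injectivity together with continuity of the inverse via real rank zero. The citation buys brevity and outsources exactly the two technical points you flag; your route buys transparency about where each hypothesis enters --- in particular it makes visible that surjectivity uses neither real rank zero nor anything about $\mathbb{T}$ beyond weak-$*$ compactness of the fibres, whereas injectivity genuinely fails without real rank zero (e.g.\ for $C(\mathbb{T})\otimes\mathcal{K}$, where all measures of the same total mass induce the same homomorphism on $K_0$).

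One assertion is stated more strongly than you can cheaply justify: that a positive $a\in\mathrm{Ped}(B)$ ``may be taken to lie in $eBe$ for a single projection $e\in\mathrm{Ped}(B)$.'' Exact containment in a corner is not automatic in a $\sigma$-unital real rank zero algebra; real rank zero gives an approximate unit of projections, not exact local units for arbitrary elements of the Pedersen ideal. The robust substitute, which suffices for injectivity, is: with $(q_r)$ an approximate unit of projections, $\tau(a)=\sup_r\tau(q_raq_r)$ for $a\geq 0$ in $\mathrm{Ped}(B)$ (by lower semicontinuity, positivity, and the trace identity $\tau(a^{1/2}q_ra^{1/2})=\tau(q_raq_r)$), and each restriction $\tau|_{q_rBq_r}$ is a bounded trace on a unital real rank zero corner, hence determined by its values on projections. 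For continuity of $\Phi^{-1}$ you additionally need the upper estimate $\limsup_i\tau_i(a)\leq\tau(a)$, i.e.\ a bound on $\tau_i(a)-\tau_i(q_raq_r)$ that is uniform in $i$; this is where the defining property of the Pedersen ideal (existence of $b$ with $ab=a$, whence $a\leq\|a\|h(b)$ for suitable $h$) must actually be used, or alternatively where one should exploit the local structure of the A$\mathbb{T}$-limit, in which the dense union $\bigcup_n\phi_{n,\infty}(B_n)$ does sit exactly inside corners by the projections $\phi_{n,\infty}(1_{B_n})$. This is precisely the kind of care in the non-unital, unbounded setting that you yourself anticipate; it is doable, but as written that step is the one place the argument is not yet complete.
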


\begin{proof}
This follows from \cite[Theorem 12.3]{G96} (or \cite[Theorem III.1.3]{BH82}) since any lower semicontinuous extended quasitrace on an A$\mathbb{T}$-algebra is an extended trace (\cite[Theorem 6]{BW11}).
\end{proof}

\subsection{Finite Rokhlin dimension}

The Rokhlin dimension of a group action on a unital $\C$-algebra was introduced in \cite{HWZ15} as a tool to obtain finite nuclear dimension and hence $\Z$-stability of the corresponding crossed product.\ In \cite{HP15}, Hirshberg and Phillips extended finite Rokhlin dimension to actions on possibly nonunital $\C$-algebras.\ We will use the simplified definition from \cite{bhishan}.

\begin{defn}[{\cite[Definition 4.6]{bhishan}}]\label{defn: RokhDim}
An automorphism $\gamma$ of a $\C$-algebra $A$ is said to have \emph{Rokhlin dimension} $d$ if $d$ is the least nonnegative integer with the following property. For any finite set $F\subseteq A$, integer $p\geq 1$ and $\epsilon>0$, there are positive contractions $f^{(l)}_{0},\dots,f^{(l)}_{p-1}\in A$, $l\in\{0,1,\dots,d\}$, such that:
\begin{enumerate}[label=\textit{(\roman*)}]
\item $\|f^{(l)}_{k}f^{(l)}_{j}a\| < \epsilon$ for every $a\in F$, $l\in\{0,1,\dots,d\}$, $j\neq k\in\{0,1,\dots,p-1\}$;\label{item:R1}
\item $\left\|\left(\sum_{l=0}^d\sum_{k=0}^{p-1}f^{(l)}_{k}\right)a-a\right\| < \epsilon$ for every $a\in F$;\label{item:R2}
\item $\|[f^{(l)}_{j},a]\| < \epsilon$ for every $a\in F$, $l\in\{0,1,\dots,d\}$, $j\in\{0,1,\dots,p-1\}$;\label{item:R4}
\item $\left\|\left(\gamma(f^{(l)}_{j})-f^{(l)}_{j+1}\right)a\right\| < \epsilon$ for every $a\in F$, $l\in\{0,1,\dots,d\}$, $j\in\{0,1,\dots,p-1\}$, where $f^{(l)}_{p}:=f^{(l)}_{0}$.\label{item:R3}
\end{enumerate}
\end{defn}

We will use finite Rokhlin dimension as a tool to detect when densely defined lower semicontinuous traces on a crossed product by $\mathbb{Z}$ can be identified with densely defined lower semicontinuous invariant traces on the algebra.\ The next lemma essentially follows from \cite[Proposition 2.3]{HCLiao16}, as the proof also works in the nonunital and nonsimple setting.\ That the proof works in the nonunital setting was previously observed in \cite[Lemma 4.8]{bhishan}. However, the proof in \cite[Proposition 2.3]{HCLiao16}, only deals with bounded traces. To generalise the result for densely defined lower semicontinuous traces, we will further assume the existence of an approximate unit of projections.

\begin{lemma}[cf.{\cite[Proposition 2.3]{HCLiao16}}]\label{lemma: TracesCrossProd}
Let $A$ be a separable $\C$-algebra with an approximate unit of projections $(q_r)_{r\in\mathbb{N}}$ and $\gamma$ be an automorphism of $A$ which has finite Rokhlin dimension. Then the restriction map from densely defined lower semicontinuous traces on $A\rtimes_\gamma\mathbb{Z}$ to $\gamma$-invariant densely defined lower semicontinuous traces on $A$ is bijective.
\end{lemma}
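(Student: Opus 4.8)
The plan is to follow the strategy of \cite[Proposition 2.3]{HCLiao16}, adapting it to the nonunital and nonsimple setting by exploiting the approximate unit of projections $(q_r)_{r\in\mathbb{N}}$. Recall that $A\rtimes_\gamma\mathbb{Z}$ contains a canonical copy of $A$ and a unitary $u$ in the multiplier algebra implementing $\gamma$ via $uau^* = \gamma(a)$, and there is a faithful canonical conditional expectation $E\colon A\rtimes_\gamma\mathbb{Z}\to A$ onto this copy of $A$. The map in one direction is clear: if $\tau$ is a densely defined lower semicontinuous trace on $A\rtimes_\gamma\mathbb{Z}$, then its restriction $\tau|_A$ is a densely defined lower semicontinuous trace on $A$ (density of the domain is where the approximate unit of projections is used, since the projections $q_r$ lie in the Pedersen ideal and are in the domain), and it is $\gamma$-invariant because $\tau(\gamma(a)) = \tau(uau^*) = \tau(a)$ for $a$ in the domain. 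So the content is to show this restriction map is both injective and surjective.

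For injectivity, I would argue that a densely defined lower semicontinuous trace $\tau$ on $A\rtimes_\gamma\mathbb{Z}$ is determined by its values on $A$. The key point is that $\tau = \tau\circ E$: for the positive elements $a u^n$ with $n\neq 0$ and the trace property, one shows $\tau$ vanishes on the off-diagonal part. Concretely, for $n \neq 0$ and a suitable positive element $x$ in the domain with spectral cutoffs by $q_r$, the Rokhlin towers let one write $x$ (approximately, up to an element supported near $A$) as a sum of unitary conjugates that telescope, forcing $\tau$ to kill the part of $x$ in the range of $\mathrm{id} - E$. More carefully, following Liao, one fixes $a\in A$ in the Pedersen ideal and $p\geq 1$, applies Definition \ref{defn: RokhDim} with $F$ containing $a$ and $q_r$ for appropriate $r$, and uses the Rokhlin elements $f^{(l)}_k$ together with the relation $\gamma(f^{(l)}_j)\approx f^{(l)}_{j+1}$ to show that $\tau(a u) $ is approximately $\tau$ of a telescoping sum that is norm-small; letting $p\to\infty$ and $\epsilon\to 0$ gives $\tau(au^n)=0$ for $n\neq 0$. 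Then $\tau = \tau\circ E$, and since $E$ maps into $A$, the trace $\tau$ is determined by $\tau|_A$.

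For surjectivity, given a $\gamma$-invariant densely defined lower semicontinuous trace $\sigma$ on $A$, I would set $\tilde\sigma := \sigma\circ E$ and check it is a densely defined lower semicontinuous trace on $A\rtimes_\gamma\mathbb{Z}$. Lower semicontinuity follows because $E$ is positive and continuous and $\sigma$ is lower semicontinuous. Density of the domain uses that $E(q_r) = q_r$ lies in the domain of $\sigma$ and that $\{q_r\}$ is an approximate unit, so the Pedersen ideal of $A\rtimes_\gamma\mathbb{Z}$ (which contains the $q_r$) lies in the domain of $\tilde\sigma$. The trace identity $\tilde\sigma(xy)=\tilde\sigma(yx)$ is the part requiring the $\gamma$-invariance of $\sigma$: on the dense $^*$-subalgebra of finite sums $\sum a_n u^n$ one reduces, using the trace property of $\sigma$ and $E(u^n a u^m) $ being $\gamma$-twisted, to the identity $\sigma(\gamma^n(a)) = \sigma(a)$, which holds by hypothesis; one then extends by lower semicontinuity and continuity of $E$. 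Finally $\tilde\sigma|_A = \sigma$ since $E|_A = \mathrm{id}_A$, so the restriction map is onto.

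The main obstacle is the passage from the bounded-trace argument of \cite[Proposition 2.3]{HCLiao16} to densely defined lower semicontinuous traces: one must take genuine care that all manipulations (applying the Rokhlin estimates, the telescoping sums, the identities $\tau = \tau\circ E$ and $\tilde\sigma = \sigma\circ E$) take place within the domain of definition of the trace, and that lower semicontinuity legitimately extends identities verified on the Pedersen ideal to the whole positive cone. This is exactly where the approximate unit of projections $(q_r)_{r\in\mathbb{N}}$ is indispensable: it provides a cofinal family of projections in the Pedersen ideal on which the traces are finite, allowing one to localise the unbounded problem to a sequence of bounded ones, apply Liao's argument with $F$ enlarged to include the relevant $q_r$, and then pass to the limit. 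The Rokhlin-dimension input itself is used exactly as in the simple unital case — only the bookkeeping around domains changes.
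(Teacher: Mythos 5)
Your proposal follows essentially the same route as the paper: reduce to the bounded functionals $x\mapsto\tau(q_rxq_r)$ using the approximate unit of projections, run Liao's Rokhlin-tower estimate (with $F$ enlarged to include $q_r$ and a commutation estimate for ${f_k^{(\ell)}}^{1/2}$ against $q_r$) to show $\tau$ vanishes on the off-diagonal Fourier modes $au^n$, and conclude $\tau=\tau|_A\circ P$, with surjectivity handled by $\sigma\mapsto\sigma\circ P$. The only cosmetic differences are your ``telescoping'' phrasing for what is really the orthogonality estimate $\|{f_k^{(\ell)}}^{1/2}a\gamma^n({f_k^{(\ell)}}^{1/2})\|<\epsilon'$ with $p=2n$ fixed (no limit $p\to\infty$ is needed), and that you spell out the surjectivity direction which the paper only asserts.
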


\begin{proof}
Using the strategy in \cite[Lemma 3.4]{KT21}, we will adapt Liao's proof of \cite[Proposition 2.3]{HCLiao16}.\ Let $P:A\rtimes_\gamma\mathbb{Z}\to A$ be the canonical conditional expectation.\ Given a densely defined lower semicontinuous trace $\tau$ on $A\rtimes_\gamma\mathbb{Z}$, we will show that $\tau=\tau|_A\circ P$. Since $q_r$ is a projection and hence contained in the Pedersen ideal of $A$, we have that $\tau(q_r)<\infty$. Moreover, $$\tau(x)=\lim\limits_{r\to\infty}\tau(q_rxq_r)$$ and $$\tau(P(x))=\lim\limits_{r\to\infty}\tau(q_rP(x)q_r)$$ for any positive element $x$ in $A\rtimes_\gamma\mathbb{Z}$. Therefore, to show that $\tau=\tau|_A\circ P$, it suffices to show that $$\tau(q_rxq_r)=\tau(q_rP(x)q_r)$$ for any positive element $x$ in $A\rtimes_\gamma\mathbb{Z}$ and any $r\in\mathbb N$. Since $\tau$ is nonzero, we can assume that $\tau(q_r)>0$ for any $r\in\mathbb{N}$.

Let $u$ be the unitary in $\mathcal{M}(A\rtimes_\gamma\mathbb{Z})$ implementing the action. Since the map $x\mapsto \tau(q_rxq_r)$ is bounded on $A\rtimes_\gamma\mathbb{Z}$ with norm $\tau(q_r)$, it suffices to show that $\tau(q_rau^nq_r)=0$ for any $a\in A$, and any $r,n\in\mathbb{N}$.

Recall that the automorphism $\gamma$ has finite Rokhlin dimension, say $d\in\mathbb{N}$.\ Let $r,n\in \N$, $a\in A$ be a contraction, and $\epsilon > 0$.\ Let $p = 2n$ and $\epsilon' = \frac{\epsilon}{(6(d+1)n+1)\tau(q_r)}$.\ Using \ref{item:R2} of Definition \ref{defn: RokhDim}, it follows that there exist $\{ f_{k}^{(\ell)}:0\leq k\leq p-1, \;0\leq \ell \leq d  \}$ in $A$ such that 
\begin{equation}\label{item: Liao1}
\left\| \left(\sum_{\ell=0}^d \sum_{k=0}^{p-1} f_k^{(\ell)}\right)a - a \right\| < \epsilon'.
\end{equation} Combining \ref{item:R1}, \ref{item:R4}, and \ref{item:R3} of Definition \ref{defn: RokhDim}, we can further assume that
\begin{equation}\label{item: Liao2}
\|  { f_k^{(\ell)} }^{ \frac{1}{2} }  a \gamma^n(  { f_k^{(\ell)} }^{\frac{1}{2}}  )  \| < \epsilon'\;\;\;\; (\ell=0,1,...,d;\; k=0,...,p-1).
\end{equation} Moreover, by \ref{item:R4} of Definition \ref{defn: RokhDim}, we can ensure that 
\begin{equation}\label{eq: Comm1}
\|{f_k^{(l)}}^{\frac{1}{2}}q_r-q_r{f_k^{(l)}}^{\frac{1}{2}}\|<\epsilon'
\end{equation}for any $0\leq k\leq p-1$ and any $0\leq l\leq d$.

Then
\begin{equation}\label{eq: Estim1}
| \tau(q_rau^nq_r) | \stackrel{\eqref{item: Liao1}}{\leq} \left|  \sum_{\ell=0}^d\sum_{k=0}^{p-1} \tau(q_rf_k^{(\ell)}au^nq_r)  \right| + \tau(q_r)\epsilon'.
\end{equation} Fix $k,l$ and let $x\coloneqq {f_k^{(l)}}^{\frac{1}{2}}au^n$. Using that $\tau$ is a trace and that $x$ is a contraction, we get that
\begin{equation}
\begin{array}{rcl}
|\tau(q_rx(q_r{f_k^{(l)}}^{\frac{1}{2}}-{f_k^{(l)}}^{\frac{1}{2}}q_r))|&=& |\tau(q_rx(q_r{f_k^{(l)}}^{\frac{1}{2}}-{f_k^{(l)}}^{\frac{1}{2}}q_r)q_r)|\\ &\leq&\tau(q_r)\|x(q_r{f_k^{(l)}}^{\frac{1}{2}}-{f_k^{(l)}}^{\frac{1}{2}}q_r)\|\\  &\stackrel{\eqref{eq: Comm1}}{\leq}& \tau(q_r)\epsilon'.
\end{array}
\end{equation} Similarly, one has that 
\begin{align*}
|\tau({f_k^{(\ell)}}^{\frac{1}{2}}q_r{f_k^{(\ell)}}^{\frac{1}{2}}au^nq_r)-\tau(q_rf_k^{(\ell)}au^nq_r)| &=|\tau(q_r({f_k^{(\ell)}}^{\frac{1}{2}}q_r-q_r{f_k^{(\ell)}}^{\frac{1}{2}})xq_r)|\\&\leq \tau(q_r)\epsilon'.    
\end{align*}

Therefore, using the last two estimations above, we get that 
\begin{align*}
    &|\tau(q_r{f_k^{(\ell)}}^{\frac{1}{2}}  a u^n  {f_k^{(\ell)}}^{\frac{1}{2}}q_r)-\tau(q_rf_k^{(\ell)}au^nq_r)|\\&\leq |\tau(q_r{f_k^{(\ell)}}^{\frac{1}{2}}au^nq_r{f_k^{(\ell)}}^{\frac{1}{2}})-\tau(q_rf_k^{(\ell)}au^nq_r)|+\tau(q_r)\epsilon'\\ &= |\tau({f_k^{(\ell)}}^{\frac{1}{2}}q_r{f_k^{(\ell)}}^{\frac{1}{2}}au^nq_r)-\tau(q_rf_k^{(\ell)}au^nq_r)|+\tau(q_r)\epsilon'\\ &\leq 2\tau(q_r)\epsilon'.
\end{align*}

Thus, from \eqref{eq: Estim1} we now get that 
$$|\tau(q_rau^nq_r)|\leq \sum_{\ell=0}^d\sum_{k=0}^{p-1} \left|  \tau(q_r{f_k^{(\ell)}}^{\frac{1}{2}}  a u^n  {f_k^{(\ell)}}^{\frac{1}{2}}q_r)   \right| +(2(d+1)p+1)\tau(q_r)\epsilon'$$
\begin{equation*}
\begin{array}{rcl}
&=&\sum_{\ell=0}^d\sum_{k=0}^{p-1} \left|  \tau(q_r{f_k^{(\ell)}}^{\frac{1}{2}} a \gamma^n( {f_k^{(\ell)}}^{\frac{1}{2}}   ) u^n q_r)    \right| + (2(d+1)p+1)\tau(q_r)\epsilon' \\
&\stackrel{\eqref{item: Liao2}}{\leq}&((d+1)p + 2(d+1)p+1)\tau(q_r)\epsilon' \\
&=&(6(d+1)n+1)\tau(q_r)\epsilon' = \epsilon.
\end{array}
\end{equation*}
Hence, $\tau(q_rau^nq_r)=0$ for any $r,n\in\mathbb{N}$, which yields that $\tau(q_rxq_r)=\tau(q_rP(x)q_r)$ for any $r\in\mathbb{N}$ and any $x\in A\rtimes_\gamma\mathbb{Z}$. Therefore, the canonical restriction induces an injection from the space of densely defined lower semicontinuous traces on $A\rtimes_{\gamma}\mathbb{Z}$ into the space of $\gamma$-invariant densely defined lower semicontinuous traces on $A$. Since this restriction is also surjective, the conclusion follows.
\end{proof}

\section{Bundles on Kirchberg algebras}\label{sect: ProofA}

In this section, we will prove Theorem \ref{thm:ExistenceKirchberg}. The construction of the required flow follows the strategy in \cite[Section 5]{EST}, together with some modifications that allow us to remove the assumption of no torsion in the $K_1$-group. We briefly describe the construction for the convenience of the reader, but we will often refer to \cite[Section 5]{EST}.

\subsection{Realising Kirchberg algebras as corners of crossed products by $\mathbb{Z}$}\label{subsect: Kirch}

Throughout this section, let $A$ be a unital $\UCT$ Kirchberg algebra and $(S,\pi)$ be a proper simplex bundle such that $\pi^{-1}(0)=\emptyset$.\ The main step in proving Theorem \ref{thm:ExistenceKirchberg} is realising $A$ as a full corner of a crossed product $B\rtimes_\gamma\mathbb{Z}$, with $B$ being a stable, possibly nonsimple, A$\mathbb{T}$-algebra with real rank zero. Comparing with the construction in \cite[Theorem 5.1]{EST}, it is precisely constructing $B$ to be an A$\mathbb{T}$-algebra rather than an AF-algebra that allows for torsion in $K_1(A)$.\ We collect the differences to the proof of \cite[Theorem 5.1]{EST} in Remark \ref{rmk: DifferenceEST}.

We first need to construct suitable $K$-theory groups, together with group isomorphisms between them. We will then obtain $B$ and $\gamma$ by using Elliott's classification of morphisms between stable, possibly nonsimple A$\mathbb{T}$-algebras with real rank zero from \cite{elliottclassifRR0}.\ We proceed to build the $K$-theory of the desired A$\mathbb{T}$-algebra $B$.\ Since $A$ will be identified with a full corner of a crossed product of $B$ with $\mathbb{Z}$, the construction of $K_0(B)$ needs to recover $K_0(A)$ via the Pimsner-Voiculescu exact sequence.\ To recover the simplex bundle $(S,\pi)$, we will ensure that $K_0(B)$ contains a dense subset of $\mathcal{A}(S,\pi)$. We will now sketch the relevant parts of the construction in \cite[Theorem 5.1]{EST}.  If $S=\emptyset$, then we can take $\theta$ to be the trivial flow on $A$, so assume that $S\neq \emptyset$ and $\pi^{-1}(0)=\emptyset$.

\begin{lemma}[cf.{\cite[Theorem 5.1]{EST}}]\label{lemma: PropGKirchberg}
Let $(S,\pi)$ be a nonempty proper simplex bundle such that $\pi^{-1}(0)=\emptyset$. Then there exists a countable torsion free ordered group $(G,G_+)$ together with an automorphism $\alpha$ of $(G,G_+)$ such that $G=\mathbb{Q}G_0$, where $G_0$ is a countable subgroup of $\mathcal{A}(S,\pi)$, $\alpha(g)(x)=e^{-\pi(x)}g(x)$ for any $g\in G$ and $x\in S$, and the following conditions hold:
\begin{enumerate}[label=\textit{(\roman*)}]
    \item for any $n\in\mathbb{N}$, $\epsilon>0$, and $f\in \mathcal{A}(S,\pi)$ such that $\supp(f)\subseteq \pi^{-1}([-n,n])$, there exists $g\in G_0$ with $\supp(g)\subseteq \pi^{-1}([-n,n])$ and $$\sup\limits_{x\in S}|f(x)-g(x)|<\epsilon;$$\label{item: ConstrG2}
    \item $(G,G_+)$ is a dimension group satisfying the strong Riesz interpolation property;\label{item: ConstrG4}
    \item the maps $\alpha$ and $\id-\alpha$  are automorphisms of $G$, and $\alpha(G_+)=G_+$;\label{item: ConstrG3}
    \item if $I$ is an order ideal in $G$ such that $\alpha(I)=I$, then $I=\{0\}$ or $I=G$;\label{item: ConstrG5}
    \item if $\psi\colon G\to\mathbb{R}$ is a positive group homomorphism such that $\psi(1)=1$, where $1$ denotes the constant function $1$ on $S$ and $\beta\in\mathbb{R}$ satisfies $\psi\circ\alpha=e^{-\beta}\psi$, then there exists a unique $\omega\in\pi^{-1}(\beta)$ such that $\psi(g)=g(\omega)$ for all $g\in G$.\label{item: ConstrG6}
\end{enumerate}
\end{lemma}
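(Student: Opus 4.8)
The plan is to build $G_0 \subseteq \mathcal{A}(S,\pi)$ as an increasing union of finitely generated subgroups, carefully arranged so that after tensoring with $\mathbb{Q}$ the resulting group $G = \mathbb{Q}G_0$ is a dimension group with all the listed properties. First I would fix a countable dense subset $\{f_n\}_{n\in\mathbb{N}}$ of $\mathcal{A}(S,\pi)$ in the topology of uniform convergence (available since $S$ is second countable and $\mathcal{A}(S,\pi) \subseteq C_0(S)$ by properness — note $\pi^{-1}(0) = \emptyset$ forces all elements of $\mathcal{A}(S,\pi)$ to vanish at infinity in the relevant sense), chosen so that each $f_n$ has support in some $\pi^{-1}([-m_n,m_n])$ and so that the collection of such $f_n$ with support in $\pi^{-1}([-m,m])$ is dense among all such functions for every $m$; this will give \textit{(i)}. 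The multiplication operator $\alpha \colon f \mapsto (x \mapsto e^{-\pi(x)}f(x))$ preserves $\mathcal{A}(S,\pi)$ since $\pi$ is affine on each fibre and continuous, and it is an automorphism of the ambient vector space with inverse $f \mapsto e^{\pi(x)}f(x)$; similarly $\id - \alpha$ acts as multiplication by $1 - e^{-\pi(x)}$, which is nonvanishing precisely because $\pi^{-1}(0) = \emptyset$, hence also invertible on $\mathcal{A}(S,\pi)$. So I would close the generating set up under the (bi-infinite) orbit of $\alpha$ and under $(\id-\alpha)^{\pm 1}$, and under the operations needed for interpolation below, keeping everything countable.

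The order is the natural one: $G_+ = \{g \in G : g(x) > 0 \text{ for all } x \in S\} \cup \{0\}$ — or rather its closure under the strict-positivity condition, mirroring Remark \ref{rmk: ElliottvsKTu}. Since this order is inherited from the function system $\mathcal{A}(S,\pi)$ it is automatically unperforated and torsion-free, and $\alpha(G_+) = G_+$ because multiplication by the strictly positive function $e^{-\pi(x)}$ preserves strict positivity; this handles \textit{(iii)} once invertibility of $\alpha$ and $\id-\alpha$ is arranged as above. For \textit{(ii)}, I would use the Effros–Handelman–Shen theorem: it suffices to get the Riesz interpolation property, and in fact I would aim for the strong Riesz interpolation property directly. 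Given $g_1, g_2 < h_1, h_2$ (strict inequalities of continuous functions on the locally compact $S$), one wants $z \in G$ strictly between them; the obstacle is that the naive choice $z = \tfrac12(\max(g_1,g_2) + \min(h_1,h_2))$ need not lie in $G$ and need not be affine on fibres. The fix is the standard one: using density \textit{(i)} one approximates such a separating function by an element of $G_0$ closely enough that the strict inequalities survive — this is where one must be careful that the approximation can be taken with the correct support and where one genuinely uses $\mathbb{Q}$-divisibility (to absorb the small error). I expect this interpolation step, done at the level of the countable subgroups and iterated cofinally, to be the main technical obstacle: one must interleave the density requirement, the $\alpha$-orbit closure, and the interpolation closure in a single exhaustion so that the limit group $G$ has all three properties simultaneously.

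For \textit{(iv)}, suppose $I \subseteq G$ is a nonzero $\alpha$-invariant order ideal. An order ideal of a function system like $G$ corresponds to functions vanishing on a closed subset $Y \subseteq S$ (the common zero set of $I \cap G_+$); $\alpha$-invariance of $I$ translates, via $\alpha(g)(x) = e^{-\pi(x)}g(x)$ and the fact that $e^{-\pi(x)} > 0$ everywhere, into the statement that $Y$ is unchanged — which is automatic — so the real content is that $Y$ must be invariant under the "dynamics" encoded by $\alpha$ combined with the density property; since by construction $G_0$ separates points of $S$ (we may include in our generating set, using properness, enough functions from $\mathcal{A}(S,\pi)$ to separate points), a nonzero $\alpha$-invariant ideal whose zero set $Y$ is nonempty would force, together with $\alpha$-invariance and point separation, that $Y = \emptyset$, giving $I = G$; and $Y = S$ gives $I = \{0\}$. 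I would spell this out using that $\alpha$ and $\id - \alpha$ are both automorphisms preserving $G_+$, which rigidifies the ideal structure. Finally \textit{(v)}: given a positive homomorphism $\psi \colon G \to \mathbb{R}$ with $\psi(1) = 1$ and $\psi \circ \alpha = e^{-\beta}\psi$, restrict $\psi$ to $G_0 \subseteq \mathcal{A}(S,\pi)$; by the density \textit{(i)} and positivity, $\psi$ is norm-continuous, hence extends to a state on the closure of $G_0$ in $C_0(S)$, which by (a suitable version of) the Riesz representation / Kadison function representation theorem is given by integration against a probability measure $\mu$ on $S$. The eigenvalue relation $\psi \circ \alpha = e^{-\beta}\psi$ forces $\int e^{-\pi(x)} g \, d\mu = e^{-\beta}\int g\, d\mu$ for all $g$, which pins $\mu$ down to be supported on $\pi^{-1}(\beta)$; then affineness of each $g \in G$ on the simplex $\pi^{-1}(\beta)$ together with the fact that $G_0$ is dense in $\mathcal{A}(S,\pi)$ and hence separates (and, on a Choquet simplex, the affine continuous functions determine points via barycentres) shows $\mu = \delta_\omega$ for a unique $\omega \in \pi^{-1}(\beta)$, so $\psi(g) = g(\omega)$. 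This is essentially the argument of \cite[Theorem 5.1]{EST}; the only new point is keeping track of the $\mathbb{Q}$-coefficients, which is harmless since $\psi$ is automatically $\mathbb{Q}$-linear.
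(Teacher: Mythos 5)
Your overall architecture matches the paper's (a countable subgroup $G_0\subseteq\mathcal{A}(S,\pi)$ closed under multiplication by $e^{n\pi}(1-e^{-\pi})^m$, then $G=\mathbb{Q}G_0$ with the strict-positivity cone), but several of the mechanisms you propose for the individual items would not work. First, the claim that $\mathcal{A}(S,\pi)\subseteq C_0(S)$ is false: properness of $(S,\pi)$ says $\pi^{-1}(K)$ is compact for compact $K\subseteq\mathbb{R}$, and the constant function $1$ lies in $\mathcal{A}(S,\pi)$ without vanishing at infinity. This is not cosmetic, because item \textit{(v)} requires $1\in G$ (it normalises $\psi$), and the later lemmas use the order unit $v=(w,1)$; a $G_0$ generated only by compactly supported functions and their $\alpha$-orbits never contains $1$. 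The paper's construction deliberately adjoins the non-compactly-supported generators $(\chi_{(-\infty,0]}\circ\pi)e^{n\pi}(1-e^{-\pi})^m$ and $(\chi_{[0,\infty)}\circ\pi)e^{n\pi}(1-e^{-\pi})^m$ (continuous precisely because $\pi^{-1}(0)=\emptyset$), whose sum at $n=m=0$ is $1$; these are also what drive the arguments for \textit{(ii)} and \textit{(iv)}.

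Second, your interpolation step for \textit{(ii)} fails as described: $\tfrac12(\max(g_1,g_2)+\min(h_1,h_2))$ is not affine on the fibres, and a function with a genuine kink on a fibre cannot be uniformly approximated by fibrewise-affine functions, so "approximate it by an element of $G_0$" is not available. The correct route (this is the content of the cited Lemma 5.3 of Elliott--Sato--Thomsen) interpolates fibrewise using that each $\pi^{-1}(t)$ is a Choquet simplex, where $\Aff(\pi^{-1}(t))$ has strong interpolation by Edwards' separation theorem, and then patches across fibres using properness. Third, your model for \textit{(iv)} is calibrated to the wrong order: with $G_+=\{f: f(x)>0 \ \forall x\}\cup\{0\}$, every nonzero positive element is strictly positive on all of $S$, so a nonzero order ideal has empty common zero set and the "ideal $=$ functions vanishing on a closed $Y$" picture collapses. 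Order ideals here record rates of decay as $\pi\to\pm\infty$, and it is the fact that $\alpha$ (multiplication by $e^{-\pi}$) shifts these decay classes, together with the $\chi_{[0,\infty)}\circ\pi$-type generators, that forces an $\alpha$-invariant ideal to be $\{0\}$ or $G$. Your sketch of \textit{(v)} is essentially the right idea (norm-contractivity from positivity and $\mathbb{Q}$-divisibility, the eigenvalue relation confining the representing measure to the compact fibre $\pi^{-1}(\beta)$, barycentres for affine functions), though you should be careful that the representation-by-a-measure step must control mass escaping to infinity on the noncompact $S$; the paper handles existence by citation and proves uniqueness directly by extending a separating element of $\Aff(\pi^{-1}(\beta))$ to a compactly supported element of $\mathcal{A}(S,\pi)$ and invoking \textit{(i)}.
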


\begin{proof}
The construction of the pair $(G,G_+)$ is the one provided in \cite[Theorem 5.1]{EST}. Since the topology on $S$ is second countable, we can choose a countable subgroup $G_0$ of $\mathcal{A}(S,\pi)$ satisfying \ref{item: ConstrG2} in the statement of the lemma.\ Moreover, we can ensure that
\begin{itemize}
    \item the function $$x\mapsto e^{n\pi(x)}(1-e^{-\pi(x)})^mf(x)$$ is in $G_0$ for any $f\in G_0$ and any $n,m\in\mathbb{Z}$;
    \item the functions $$x\mapsto (\chi_{(-\infty,0]}\circ\pi)(x)e^{n\pi(x)}(1-e^{-\pi(x)})^m$$ and $$x\mapsto (\chi_{[0,\infty)}\circ\pi)(x)e^{n\pi(x)}(1-e^{-\pi(x)})^m$$ are in $G_0$ for any $n,m\in\mathbb{Z}$.\ The functions $\chi_{(-\infty,0]}\circ\pi$ and $\chi_{[0,\infty)}\circ\pi$ are continuous on $S$ because $\pi^{-1}(0)=\emptyset$.
\end{itemize} Let $G=\mathbb{Q}G_0$ 
and $G_+=\{f\in G: f(x)>0, x\in S\}\cup\{0\}.$ 

That $(G,G_+)$ is a dimension group satisfying the strong Riesz interpolation property follows from \cite[Lemma 5.3]{EST}.\ Since the function $$x\mapsto e^{n\pi(x)}(1-e^{-\pi(x)})^mf(x)$$ is in $G_0$ for any $f\in G_0$ and any $n,m\in\mathbb{Z}$, we get that $\alpha$ is an automorphism of $G$ and $\alpha(G_+)=G_+$.\ Furthermore, note that $\pi^{-1}(0)=\emptyset$, so $\id-\alpha$ is also an automorphism of $G$.\ This proves \ref{item: ConstrG3}.\ Condition \ref{item: ConstrG5} follows from the proof of \cite[Lemma 5.5]{EST}, while the existence of an element $\omega\in\pi^{-1}(\beta)$ as in \ref{item: ConstrG6} is shown in \cite[Lemma 5.6]{EST} (note that both the group $G$ and the map $\psi$ appearing in the proof of \cite[Lemma 5.6]{EST} coincide with the '$G$' and '$\psi$' in the statement of the lemma). 

Suppose that there exists $\omega'\neq \omega$ in $\pi^{-1}(\beta)$ such that $g(\omega)=g(\omega')$ for all $g\in G$. Then, by \ref{item: ConstrG2} above, $h(\omega)=h(\omega')$ for any $h\in \mathcal{A}(S,\pi)$ with compact support. As $\omega,\omega'\in \pi^{-1}(\beta)$, there exists $f\in \Aff(\pi^{-1}(\beta))$ such that $f(\omega)\neq f(\omega')$. Therefore, there exist $n\in\mathbb{N}$ and $h\in\mathcal{A}(S,\pi)$ such that the support of $h$ is contained in $\pi^{-1}([-n,n])$ and $h(\omega)\neq h(\omega')$, which is a contradiction. Hence, $\omega$ is unique.
\end{proof}

The pair $(G,G_+)$ only contains information about the simplex $S$.\ To complete the construction, the key ingredient is the following proposition which follows as a direct application of \cite[Proposition 3.5]{SESRordam}. 

\begin{prop}\label{prop: SESRordam}
There exist countable, abelian, torsion free groups $H_0\neq \{0\}$ and $H_1$, automorphisms $\kappa_0$ and $\kappa_1$ of $H_0$ and $H_1$ respectively and homomorphisms $q_i:H_i\to K_i(A)$ for $i=0,1$ such that 
 \[
\begin{tikzcd}
0 \ar{r} & H_i \ar{r}{\id-\kappa_i} & H_i\ar{r}{q_i} & K_i(A)\ar{r} & 0
\end{tikzcd}
\] is exact for $i=0,1$.
\end{prop}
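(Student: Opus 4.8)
The goal is to produce, for each $i = 0,1$, a short exact sequence
\[
\begin{tikzcd}
0 \ar{r} & H_i \ar{r}{\id-\kappa_i} & H_i \ar{r}{q_i} & K_i(A) \ar{r} & 0
\end{tikzcd}
\]
with $H_i$ countable, torsion free, abelian, $H_0 \neq \{0\}$, and $\kappa_i$ an automorphism. The plan is to invoke \cite[Proposition 3.5]{SESRordam} (which the excerpt already advertises as the key input): that result should say that any countable abelian group $K$ can be written as the cokernel of a map $\id - \kappa$ on a countable \emph{torsion free} abelian group $H$ with $\kappa$ an automorphism. Since $A$ is a Kirchberg algebra, $K_0(A)$ and $K_1(A)$ are countable abelian groups with no further constraint, so applying that proposition twice, once with $K = K_0(A)$ and once with $K = K_1(A)$, yields groups $H_0, H_1$, automorphisms $\kappa_0, \kappa_1$, and surjections $q_i \colon H_i \to K_i(A)$ with kernel exactly the image of $\id - \kappa_i$. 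Exactness at the left (injectivity of $\id - \kappa_i$) is automatic once $H_i$ is torsion free: if $(\id - \kappa_i)h = 0$ then $\kappa_i$ fixes $h$, and in the construction behind \cite[Proposition 3.5]{SESRordam} the fixed-point set of $\kappa_i$ is trivial — or, more robustly, one arranges $H_i$ so that $\id - \kappa_i$ is injective by construction.

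\textbf{Steps, in order.} First I would recall the precise statement of \cite[Proposition 3.5]{SESRordam} and check that its hypotheses are met by an arbitrary countable abelian group; the $K$-groups of a unital UCT Kirchberg algebra are exactly arbitrary countable abelian groups (indeed $K_0(A)$ carries the class of the unit but that plays no role here). Second, apply it to $K_0(A)$ to get $(H_0, \kappa_0, q_0)$ with the desired exact sequence. Third, apply it to $K_1(A)$ to get $(H_1, \kappa_1, q_1)$. Fourth, verify the bookkeeping claims: $H_i$ torsion free (given by the cited proposition), countable (ditto), $\kappa_i$ an automorphism (ditto), and $\id - \kappa_i$ injective (from torsion-freeness plus triviality of $\kappa_i$-fixed points, or directly from the construction). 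Fifth, note $H_0 \neq \{0\}$: if $K_0(A) \neq \{0\}$ this is immediate since $q_0$ is onto; if $K_0(A) = \{0\}$ then the cited construction still produces a nontrivial $H_0$ (the point of needing torsion-freeness is precisely that $H_0$ can — indeed must — be infinite even when the cokernel vanishes, e.g.\ $H_0 = \mathbb{Z}$ with $\kappa_0 = \id$ gives cokernel $\mathbb{Z}$, whereas $H_0 = \mathbb{Z}$ with a suitable automorphism structure can give cokernel $0$); in any case one can always enlarge $H_0$ by a direct summand $\bigoplus_{\mathbb{N}} \mathbb{Z}[t, t^{-1}]$-type piece on which $\id - \kappa_0$ is an isomorphism, which changes neither the cokernel nor torsion-freeness.

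\textbf{Main obstacle.} There is essentially no hard mathematics here — the statement is a direct corollary of \cite[Proposition 3.5]{SESRordam}, exactly as the text preceding it announces. The only genuine point requiring a sentence of care is the left-exactness (injectivity of $\id - \kappa_i$), which must be read off from the specific construction in \cite{SESRordam} rather than being formal: an automorphism of a torsion free group can have nontrivial fixed points (e.g.\ the swap on $\mathbb{Z}^2$ fixes the diagonal), so one cannot deduce injectivity of $\id - \kappa_i$ from torsion-freeness alone. I expect the cited proposition is stated so that this is built in; if not, the standard fix is to realise $H_i$ as a module over $\mathbb{Z}[t, t^{-1}]$ (with $\kappa_i$ = multiplication by $t$) that is torsion free as a $\mathbb{Z}[t,t^{-1}]$-module, on which $\id - \kappa_i = $ multiplication by $1 - t$ is injective since $1 - t$ is a non-zero-divisor. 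The remaining verifications (countability, torsion-freeness, $H_0 \neq \{0\}$) are routine bookkeeping.
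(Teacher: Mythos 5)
Your approach is exactly the paper's: the proof is a two-line application of \cite[Proposition 3.5]{SESRordam}, applied to the \emph{pairs} $(K_0(A),0)$ and $(K_1(A),0)$ -- that result realises a prescribed kernel and cokernel of $\id-\kappa_i$ simultaneously, so taking the kernel to be $0$ builds in the left-exactness you correctly flag as the one point needing care -- and the degenerate case is patched by taking $H_0=\mathbb{Q}$ with $\kappa_0(x)=2x$ when $K_0(A)=0$. Two of your parenthetical illustrations are wrong, though neither is load-bearing: $\mathbb{Z}$ admits no automorphism $\kappa$ with $\mathrm{coker}(\id-\kappa)=0$ (its only automorphisms are $\pm\id$, giving cokernel $\mathbb{Z}$ or $\mathbb{Z}/2$), and on $\mathbb{Z}[t,t^{-1}]$ with $\kappa=$ multiplication by $t$ the map $\id-\kappa$ is multiplication by $1-t$, which is injective but not surjective (cokernel $\mathbb{Z}$), so that summand does not do what you want; the correct witness for your enlargement idea is precisely the paper's $(\mathbb{Q},\,x\mapsto 2x)$, where $1-2=-1$ is a unit.
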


\begin{proof}
We apply \cite[Proposition 3.5]{SESRordam} to the pairs $(K_0(A),0)$ and $(K_1(A),0)$.\ Moreover, if $H_0=\{0\}$, which happens only when $K_0(A)=0$, we take $H_0=\mathbb{Q}$ and $\kappa_0(x)=2x$.
\end{proof}

\begin{lemma}\label{lemma: ConstrK0Kirchberg}
Set $\Gamma=H_0\oplus G$ and if $p:\Gamma\to G$ is the canonical projection, let $$\Gamma_+=\{x\in \Gamma: p(x)\in G_+\setminus\{0\}\}\cup\{0\}.$$ Then the following conditions hold:
\begin{enumerate}[label=\textit{(\roman*)}]
    \item the pair $(\Gamma,\Gamma_+)$ is a dimension group;\label{lemma: dimgroup}
    \item for any $g\in \Gamma_+$, there exist $h_1,h_2\in \Gamma_+$ such that $g=2h_1+3h_2$ i.e. $\Gamma_+$ is weakly divisible;\label{lemma:LargeDen}
    \item If $I$ is a nonzero order ideal of $\Gamma$, then $I=H_0\oplus p(I)$;\label{item: IdealGamma}
    \item the only order ideals $I$ in $\Gamma$ such that $(\kappa_0\oplus\alpha)(I)=I$ are $I=\{0\}$ and $I=\Gamma$;\label{lemma: InvIdeals}
    \item the ordered group $(\Gamma,\Gamma_+)$ has no nonzero liminary subquotients i.e. no subquotient has all its prime quotients isomorphic to $\mathbb{Z}$.\label{item: liminary}
\end{enumerate}
\end{lemma}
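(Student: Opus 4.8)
The plan is to establish the five properties essentially independently, exploiting that $\Gamma$ is built as a direct sum $H_0\oplus G$ with the order pulled back along the projection $p$, so that $\Gamma_+$ depends only on $G_+$. First I would record the basic observation that, since $G$ is simple (this is what property \ref{item: ConstrG5} of Lemma \ref{lemma: PropGKirchberg} gives: the only $\alpha$-invariant order ideals are trivial, but actually simplicity of $G$ follows from the fact that $G$ is a dense subgroup of $\mathcal{A}(S,\pi)$ with $G_+$ given by strict positivity — I should check whether the excerpt already gives simplicity of $G$; if not, I would argue it directly from the strong Riesz interpolation property together with density), the positive cone $\Gamma_+$ consists of $0$ together with everything mapping into $G_+\setminus\{0\}$. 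For \ref{lemma: dimgroup}, I would verify unperforation and Riesz interpolation directly: if $ng\in\Gamma_+$ with $g\neq 0$, then $np(g)\in G_+\setminus\{0\}$, and since $G$ is unperforated (it is a dimension group by \ref{item: ConstrG4}) we get $p(g)\in G_+$, and $p(g)\neq 0$ since $np(g)\neq 0$; hence $g\in\Gamma_+$. For interpolation, given $g_i\le h_j$ in $\Gamma$, apply $p$ and use the strong Riesz interpolation property of $G$ to find a strict interpolant $z_0\in G$ with $p(g_i)<z_0<p(h_j)$ whenever the corresponding differences are nonzero; then lift $z_0$ to any $z\in\Gamma$ with $p(z)=z_0$, taking care of the degenerate cases where some $g_i=h_j$. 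The slight subtlety here — handling equalities versus strict inequalities in the pulled-back order — is exactly the kind of bookkeeping that appears when the order is defined by strict positivity on a quotient, and I expect this to be the most fiddly (though not deep) part of \ref{lemma: dimgroup}.

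Property \ref{lemma:LargeDen} is where I would use that $G=\mathbb{Q}G_0$, so $G$ is a $\mathbb{Q}$-vector space and in particular $2$- and $3$-divisible; given $g\in\Gamma_+$ write $g=(h,g')$ with $g'=p(g)\in G_+\setminus\{0\}$, set $h_1=(0,\tfrac{1}{5}g')$ — no, better: choose the decomposition so the $H_0$-component is also split. Since $H_0$ is torsion free but not necessarily divisible, I would instead write $g'= 2\cdot\tfrac{1}{5}g' + 3\cdot\tfrac{1}{5}g'$ inside $G$ (using $\tfrac{1}{5}g'\in G_+\setminus\{0\}$ by divisibility and simplicity), and then solve $h = 2a+3b$ in $H_0$ — which is always possible in any abelian group since $\gcd(2,3)=1$, e.g. $a=-h$, $b=h$. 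Then $h_1=(a,\tfrac15 g')$ and $h_2=(b,\tfrac15 g')$ lie in $\Gamma_+$ because their $p$-images are $\tfrac15 g'\in G_+\setminus\{0\}$, and $2h_1+3h_2=(2a+3b,\,g')=(h,g')=g$. For \ref{item: IdealGamma}: let $I$ be a nonzero order ideal; pick $0\neq x\in I$, so $p(x)\in G_+\setminus\{0\}$ is an order unit of $G$ (by simplicity of $G$); then for any $(h,0)\in\Gamma$ one has $0\le (h,0) \le (0, N p(x)) \le $ (a multiple of $|x|$ in $\Gamma_+$) for $N$ large — more carefully, $(h,0)$ is dominated in $\Gamma$ by $(h', g'')$ for suitable $g''$ a large multiple of $p(x)$, hence $(h,0)\in I$; thus $H_0\oplus 0\subseteq I$, and then $I = (H_0\oplus 0) + (0\oplus p(I)) = H_0\oplus p(I)$, using that $p(I)$ is an order ideal of $G$ and that $I$ is saturated.

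Property \ref{lemma: InvIdeals} then follows by combining \ref{item: IdealGamma} with \ref{item: ConstrG5} of Lemma \ref{lemma: PropGKirchberg}: if $I$ is a nonzero $(\kappa_0\oplus\alpha)$-invariant order ideal, then $I=H_0\oplus p(I)$ and $p(I)$ is an $\alpha$-invariant order ideal of $G$ (since $(\kappa_0\oplus\alpha)(I)=I$ forces $\alpha(p(I))=p(I)$); by \ref{item: ConstrG5} either $p(I)=0$, giving $I=H_0\oplus 0$ which is not invariant unless $H_0 = 0$ — here I would invoke that $(\kappa_0\oplus\alpha)$-invariance of $H_0\oplus 0$ together with the exact sequence forces a contradiction, since actually any order ideal must be of the form $H_0 \oplus p(I)$ with $p(I)$ an order ideal, and $H_0\oplus 0$ is indeed an order ideal, so I must rule it out by invariance: $(\kappa_0\oplus\alpha)(H_0\oplus 0) = H_0\oplus 0$, which IS invariant — so I need to reexamine. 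The resolution: $H_0\oplus 0$ is \emph{not} an order ideal of $\Gamma$ because it is not of the form $H_0\oplus p(I)$ for an order ideal $p(I)$ of $G$ containing... wait, $p(H_0\oplus 0)=0$ which is an order ideal, so by \ref{item: IdealGamma} applied contrapositively, $H_0\oplus 0$ can only be an order ideal if it is \emph{zero} — i.e.\ \ref{item: IdealGamma} says every \emph{nonzero} order ideal contains all of $H_0$, so if $I\supseteq H_0\oplus 0$ and $I\neq 0$ then $p(I)\neq 0$; hence $H_0\oplus 0$ is not an order ideal at all (it is nonzero but does not contain a nonzero element with nonzero $p$-image, contradicting that it would have to equal $H_0\oplus p(I)$ with $p(I)\neq 0$). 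So the only $(\kappa_0\oplus\alpha)$-invariant order ideals have $p(I)\in\{0,G\}$ with $p(I)=0$ impossible for nonzero $I$, giving $p(I)=G$ and $I=\Gamma$. Finally, \ref{item: liminary}: by \ref{lemma:LargeDen} the cone $\Gamma_+$ is weakly divisible, and I would argue that weak divisibility passes to all subquotients in the ordered-group sense; since a liminary ordered group has a prime quotient isomorphic to $\mathbb{Z}$, whose positive cone $\mathbb{Z}_{\ge 0}$ is manifestly not weakly divisible ($1\neq 2a+3b$ for $a,b\ge 0$), no nonzero subquotient can be liminary. The main obstacle I anticipate is not any single hard estimate but rather getting the ideal-structure bookkeeping in \ref{item: IdealGamma}--\ref{lemma: InvIdeals} exactly right — in particular pinning down precisely which subgroups of $H_0\oplus G$ are order ideals, and making sure the degenerate case $H_0=\{0\}$ is excluded (which is why Proposition \ref{prop: SESRordam} was stated with $H_0\neq\{0\}$).
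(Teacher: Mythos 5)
Most of your plan is sound, and parts \ref{lemma:LargeDen} and \ref{item: liminary} are arguably cleaner than the paper's own treatment: the paper proves \ref{lemma:LargeDen} by producing $h$ with $2h\leq g\leq 3h$ and setting $h_1=3h-g$, $h_2=g-2h$, and proves \ref{item: liminary} by a bare-hands case analysis on the value of a putative positive surjection onto $\mathbb{Z}$ at $(h,\tfrac{1}{2}i)$; your ``weak divisibility passes to subquotients and $1\neq 2a+3b$ in $\mathbb{Z}_{\geq 0}$'' packages the same idea better, provided you verify that the elements $h_1,h_2$ from \ref{lemma:LargeDen} can be taken inside the order ideal $I$ (they can, since $\tfrac{1}{5}p(g)\in p(I)$ by heredity of $p(I)$ in $G$). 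For \ref{lemma: dimgroup} the paper simply cites \cite[Lemma 3.2]{DimGroup80}; your direct verification of unperforation and interpolation works (the equality cases are handled by taking $z=g_i$ when $g_i=h_j$), modulo also recording directedness of $\Gamma$.

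There is, however, a genuine gap in your argument for \ref{item: IdealGamma}. First, the inequality $0\leq (h,0)$ is false for $h\neq 0$: by definition $\Gamma_+\setminus\{0\}=p^{-1}(G_+\setminus\{0\})$, so $(h,0)$ is never positive. Second, your fallback --- ``$(h,0)$ is dominated in $\Gamma$ by an element of $I$, hence $(h,0)\in I$'' --- is not a valid inference: order ideals are hereditary only for elements sandwiched between $0$ and a member of the ideal, and mere domination gives nothing (in $\mathbb{Z}^2$ with the coordinatewise order, $(1,-1)\leq(1,0)\in\mathbb{Z}\oplus 0$, yet $(1,-1)\notin\mathbb{Z}\oplus 0$). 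Third, you lean on ``simplicity of $G$'' to make $p(x)$ an order unit; $G$ need not be simple when $S$ is non-compact (a strictly positive function in $\mathcal{A}(S,\pi)$ need not be bounded below, and \ref{item: ConstrG5} of Lemma \ref{lemma: PropGKirchberg} only rules out nontrivial $\alpha$-\emph{invariant} order ideals). The correct argument, which is the paper's, needs none of this: since $I=(I\cap\Gamma_+)-(I\cap\Gamma_+)$ is nonzero, pick $(h',g)\in I\cap\Gamma_+$ with $g\in G_+\setminus\{0\}$; for any $h\in H_0$ the element $(h+h',g)$ has $p$-image $g$, so $0<(h+h',g)<2(h',g)$, whence $(h+h',g)\in I$ by heredity, and subtracting $(h',g)\in I$ gives $(h,0)\in I$. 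Once $H_0\oplus 0\subseteq I$ is established this way, your derivation of $I=H_0\oplus p(I)$ and of \ref{lemma: InvIdeals} goes through, and your eventual resolution of the $H_0\oplus 0$ worry is correct: a nonzero order ideal must contain a nonzero positive element, whose $p$-image is nonzero, so $H_0\oplus 0$ is not an order ideal and $p(I)\neq\{0\}$ for every nonzero order ideal $I$.
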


\begin{proof}
We first check \ref{lemma: dimgroup}. As $(G,G_+)$ is a dimension group satisfying the strong Riesz interpolation property (Lemma \ref{lemma: PropGKirchberg}\ref{item: ConstrG4}) and $H_0$ is nonzero and torsion free, $(\Gamma, \Gamma_+)$ is a dimension group by \cite[Lemma 3.2]{DimGroup80}. 

To check \ref{lemma:LargeDen}, it suffices to find $h\in \Gamma_+$ such that $2h\leq g\leq 3h$.\ We then let $h_1=3h-g$ and $h_2=g-2h$. Let $g\in \Gamma_+$ be nonzero. Since $G=\mathbb{Q}G$, there exists $h'\in G_+$ nonzero such that $\frac{7}{3}h'\leq p(g)\leq \frac{8}{3}h'$. Then, $p(g-2(0,h'))=p(g)-2h'>0$ and $p(3(0,h')-g)=3h'-p(g)>0$, so we can take $h=(0,h')$ to conclude that $2h\leq g\leq 3h$ in $\Gamma$. 

We now check \ref{item: IdealGamma}. Let $I$ be a nonzero order ideal of $\Gamma$.\ Since $I=\left(I\cap \Gamma_+\right)-\left(I\cap\Gamma_+\right)$, there exist $h'\in H_0$ and $g\in G_+\setminus\{0\}$ such that $(h',g)\in I$.\ Then, for any $h\in H_0$, one has that $$0<(h+h',g)<2(h',g)$$ in $\Gamma$.\ Since $I$ is an order ideal, it follows that $(h+h',g)\in I$.\ Combining this with the fact that $(h',g)\in I$, yields that $(h,0)\in I$.\ Hence $H_0\oplus 0\subseteq I$.\ If $g\in p(I)$, there exists $h'\in H_0$ such that $(h',g)\in I$.\ If $h\in H_0$, then $(h+h',0)\in I$, so $(h,g)\in I$.\ Hence, we get that $I=H_0\oplus p(I)$.

The proof of \ref{lemma: InvIdeals} is contained in \cite[Lemma 5.5]{EST}, but we will include the details for the convenience of the reader.\ Let $I$ be a nonzero order ideal of $\Gamma$ such that $(\kappa_0\oplus\alpha)(I)=I$.\ By condition \ref{item: IdealGamma}, it follows that $I=H_0\oplus p(I)$.\ Moreover, $p(I)$ is an order ideal in $G$ such that $\alpha(p(I))=p(I)$.\ Condition \ref{item: ConstrG5} of Lemma \ref{lemma: PropGKirchberg} yields that $G=p(I)$.\ Thus, $I=\Gamma$.

We prove \ref{item: liminary} by contradiction. Let $J\subseteq I$ be order ideals of $\Gamma$ such that $I/J$ is nonzero and liminary in the category of ordered groups i.e. any prime quotient of $I/J$ is isomorphic to $\mathbb{Z}$. Then, by \ref{item: IdealGamma} above, $I=H_0\oplus p(I)$. Suppose that there exists a surjection $q:(H_0\oplus p(I))/J\to\mathbb{Z}$ and let $(h,i)$ be positive in $H_0\oplus p(I)$ such that $q(h,i)=1$.\ As $p(I)\subseteq G$ is an order ideal, it follows that $\frac{1}{2}i\in p(I)$. If $(h,\frac{1}{2}i)\in J$, then $(0,\frac{1}{2}i)\in J$ by \ref{item: IdealGamma}. Therefore, $(h,i)\in J$, which is a contradiction, so $(h,\frac{1}{2}i)\notin J$. Hence, one gets that $$0\leq q(h,\frac{1}{2}i)\leq 1.$$ If $q(h,\frac{1}{2}i)=0$, then $q(4h,2i)=0$. But $(h,i)\leq (4h,2i)$, so $1=q(h,i)\leq 0$, which is a contradiction. If $q(h,\frac{1}{2}i)=1$, then $q(0,\frac{1}{2}i)=0$, which gives that $q(0,2i)=0$. But $(h,i)\leq (0,2i)$, so $1=q(h,i)\leq 0$, which is a contradiction. Thus, the subquotient $I/J$ is not liminary.
\end{proof}

\begin{lemma}\label{lemma: BStates}
Choose $w\in H_0$ such that $q_0(w)=[1_A]_0$.\ Let $v=(w,1)\in \Gamma_+$, where $1$ stands for the constant function $1$ on $S$.\ Let $\varphi: \Gamma\to\mathbb{R}$ be a positive homomorphism and $\beta\in\mathbb{R}$ such that $\varphi(v)=1$ and $\varphi\circ(\kappa_0\oplus\alpha)=e^{-\beta}\varphi$.\ Then, there is a unique $\omega\in \pi^{-1}(\beta)$ such that $\varphi(h,g)=g(\omega)$ for all $(h,g)\in \Gamma$.\ In particular, there are no positive homomorphisms on $\Gamma$ sending $v$ to $1$ that are invariant under the automorphism $\kappa_0\oplus\alpha$.  
\end{lemma}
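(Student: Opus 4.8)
The plan is to reduce everything to Lemma \ref{lemma: PropGKirchberg}\ref{item: ConstrG6} by pushing the positive homomorphism $\varphi$ on $\Gamma$ down to a positive homomorphism $\psi$ on $G$. The natural candidate is $\psi := \varphi \circ \iota$, where $\iota : G \to \Gamma$ is the inclusion $g \mapsto (0,g)$. First I would check that $\psi$ is positive: if $g \in G_+\setminus\{0\}$, then $(0,g) \in \Gamma_+$ by the definition of $\Gamma_+$ (since $p(0,g)=g \in G_+\setminus\{0\}$), so $\psi(g)=\varphi(0,g)\geq 0$. Next, $\psi$ is compatible with $\alpha$ in the required way: since $(\kappa_0\oplus\alpha)(0,g)=(0,\alpha(g))$, we get $\psi\circ\alpha = \varphi\circ(\kappa_0\oplus\alpha)\circ\iota = e^{-\beta}\varphi\circ\iota = e^{-\beta}\psi$.

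The one genuine subtlety is the normalisation $\psi(1)=1$, since the hypothesis gives $\varphi(v)=\varphi(w,1)=1$, not $\varphi(0,1)=1$. So I must show $\varphi(w,0)=0$, i.e.\ that $\varphi$ annihilates the $H_0$-summand. This is where the structural input on $\Gamma$ is used. The point is that $H_0 \oplus 0$ is contained in every nonzero order ideal (this is essentially the content of Lemma \ref{lemma: ConstrK0Kirchberg}\ref{item: IdealGamma}, or more directly: for any $h\in H_0$ and any nonzero $g\in G_+$ one has $0 \leq (h + h', g) \leq 2(h',g)$ in $\Gamma$ regardless of $h$, witnessing that $H_0\oplus 0$ lies in the order ideal generated by any nonzero positive element). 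Now consider $\ker\varphi$ — or rather the order ideal $I_\varphi := \{x \in \Gamma : \varphi(|x|) = 0\}$ associated to $\varphi$; since $\varphi$ is a positive homomorphism on a dimension group, $I_\varphi$ is an order ideal, and it is proper because $\varphi(v)=1 \neq 0$. If $\varphi(w,0) \neq 0$ then $(w,0) \notin I_\varphi$, but more usefully: I claim $H_0\oplus 0 \subseteq I_\varphi$. Indeed, for any fixed nonzero $g_0 \in G_+$ with (after rescaling, using $G = \mathbb{Q}G$) $\varphi(0,g_0)$ finite, we have for every $h\in H_0$ that $|(h,0)| \leq$ some multiple that is dominated by elements $(h',g)$ with $g$ made arbitrarily $\varphi$-small — more carefully, $0 \leq (h,0)+(h',g) \leq 2(h',g)$ forces $0\leq \varphi(h,0)+\varphi(h',g)\leq 2\varphi(h',g)$, and since $G=\mathbb{Q}G$ we can choose $(h',g)\in\Gamma_+$ with $\varphi(h',g)$ as small as we like (take $g$ a small positive rational multiple of some fixed positive element, adjusting $h'$), whence $\varphi(h,0)=0$. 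Thus $\varphi(w,0)=0$, so $\psi(1)=\varphi(0,1)=\varphi(w,1)-\varphi(w,0)=1$.

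With $\psi : G \to \mathbb{R}$ a positive homomorphism satisfying $\psi(1)=1$ and $\psi\circ\alpha = e^{-\beta}\psi$, Lemma \ref{lemma: PropGKirchberg}\ref{item: ConstrG6} provides a unique $\omega \in \pi^{-1}(\beta)$ with $\psi(g)=g(\omega)$ for all $g\in G$. Then for $(h,g)\in\Gamma$ we compute $\varphi(h,g) = \varphi(h,0)+\varphi(0,g) = 0 + \psi(g) = g(\omega)$, using $\varphi(h,0)=0$ for all $h\in H_0$ established above. Uniqueness of $\omega$ transfers directly from the uniqueness in Lemma \ref{lemma: PropGKirchberg}\ref{item: ConstrG6}: any $\omega'$ with $\varphi(h,g)=g(\omega')$ for all $(h,g)$ in particular satisfies $g(\omega')=\psi(g)$ for all $g\in G$, forcing $\omega'=\omega$.

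**The last sentence** — no $(\kappa_0\oplus\alpha)$-invariant positive homomorphism on $\Gamma$ sends $v$ to $1$ — follows immediately: if $\varphi$ were such, then $\varphi = \varphi\circ(\kappa_0\oplus\alpha) = e^{-\beta}\varphi$ with $\beta = 0$, so $\omega \in \pi^{-1}(0)$, contradicting $\pi^{-1}(0)=\emptyset$. (Equivalently, $\psi$ would be an $\alpha$-invariant state on $G$, and invariance forces the associated $\beta$ to be $0$, again contradicting $\pi^{-1}(0)=\emptyset$ via Lemma \ref{lemma: PropGKirchberg}\ref{item: ConstrG6}.)

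**The main obstacle** I anticipate is making the argument "$\varphi(h,0)=0$ for all $h \in H_0$" fully rigorous: one must be careful that $H_0$ may be infinitely generated and torsion free but not divisible, so the rescaling trick has to be run through the $G$-component (which is a $\mathbb{Q}$-vector space), using that for a fixed $h\in H_0$ the element $(h,0)$ is squeezed between $0$ and $2(h',g)$ for a \emph{suitable} $h'$ (depending on $h$) and an arbitrarily $\varphi$-small positive $g$. This is exactly the mechanism already exploited in the proof of Lemma \ref{lemma: ConstrK0Kirchberg}\ref{item: IdealGamma}, so the estimate is available; the work is just in phrasing it as a statement about the homomorphism $\varphi$ rather than about order ideals.
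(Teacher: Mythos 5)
Your argument is correct and follows essentially the same route as the paper: annihilate the $H_0$-summand (showing $\varphi(h,0)=0$ for all $h\in H_0$), descend to a positive homomorphism $\psi$ on $G$ with $\psi(1)=1$ and $\psi\circ\alpha=e^{-\beta}\psi$, and invoke Lemma \ref{lemma: PropGKirchberg}\ref{item: ConstrG6}. The paper's squeeze is the marginally more direct $\pm n(h,0)+v\in\Gamma_+$, giving $|\varphi(h,0)|\leq 1/n$, whereas you shrink the dominating positive element using divisibility of $G$ --- both work, though your aside about the order ideal $I_\varphi=\{x\in\Gamma:\varphi(|x|)=0\}$ should be dropped, since $|x|$ is not defined in a general dimension group and that detour plays no role in your final argument.
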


\begin{proof}
This follows as in \cite[Lemma 5.6]{EST}.\ Let $h\in H_0$ and $n\geq 1$. Then $n(h,0)+v\in \Gamma_+$ and $-n(h,0)+v\in \Gamma_+$, which yields that $n\varphi(h,0)+1\geq 0$ and $-n\varphi(h,0)+1\geq 0$ for all $n\geq 1$.\ Hence, $\varphi(h,0)=0$, which yields a positive homomorphism $\psi:G\to\mathbb{R}$ such that $\psi\circ p=\varphi$.\ Since $\varphi(v)=1$ and $p(v)=1$, it follows that $\psi(1)=1$.\ Moreover, $\varphi\circ(\kappa_0\oplus\alpha)=e^{-\beta}\varphi$ and $p\circ(\kappa_0\oplus\alpha)(g)=\alpha(g)$ for any $g\in G$, so $\psi\circ\alpha=e^{-\beta}\psi$.\ Thus, \ref{item: ConstrG6} of Lemma \ref{lemma: PropGKirchberg} shows that there is a unique $\omega\in \pi^{-1}(\beta)$ such that $\psi(g)=g(\omega)$ for all $g\in G$.\ Hence, $\varphi(h,g)=g(\omega)$ for all $(h,g)\in \Gamma$.\ The last statement follows by taking $\beta=0$, as $\pi^{-1}(0)=\emptyset$.
\end{proof}

We consider the triple $(\Gamma,\Gamma_+,H_1\oplus G)$ and we claim that it realises the $K$-theory of some stable A$\mathbb{T}$-algebra with real rank zero.\ This will follow from Elliott's classification of A$\mathbb{T}$-algebras with real rank zero from \cite{elliottclassifRR0}.\ We first need to impose a suitable order on the graded group $\Gamma\oplus (H_1\oplus G)$.\ We will do so using \cite[Theorem 4.28]{ElliottRR0II}.

\begin{lemma}\label{lemma: OrderAT}
Consider the graded group $\Lambda_*=\Gamma\oplus (H_1\oplus G)$ and define $$(\Lambda_*)_+=\{((h_0,g_0),(h_1,g_1))\in \Lambda_*\colon (h_0,g_0)\in \Gamma_+\setminus\{0\}, g_1\in I(g_0)\}\cup\{0\},$$ where $I(g_0)$ is the order ideal of $G$ generated by $g_0$.\ Then the following conditions hold:
\begin{enumerate}[label=\textit{(\roman*)}]
\item $(\Lambda_*,(\Lambda_*)_+)$ is a countable graded ordered group;\label{item: O0}
\item $(\Lambda_*,(\Lambda_*)_+)$ is unperforated;\label{item: O1}
\item $(\Lambda_*,(\Lambda_*)_+)$ has the Riesz decomposition property and the Riesz interpolation property;\label{item: O2}
\item $(\Lambda_*,(\Lambda_*)_+)$ is the inductive limit of a sequence $\Lambda_{*1}\to \Lambda_{*2} \to \ldots$ of graded ordered groups such that each $\Lambda_{*i} = \Lambda_{0i}\oplus \Lambda_{1i}$ is the direct sum of finitely many basic building blocks of the form $\mathbb Z \oplus\mathbb{Z}$ with the order determined by strict positivity in the first component.\label{item: O3}
\end{enumerate}
\end{lemma}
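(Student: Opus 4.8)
The plan is to obtain $(\Lambda_*,(\Lambda_*)_+)$ as an instance of the realization result \cite[Theorem 4.28]{ElliottRR0II}, which produces graded ordered groups of precisely this shape --- a dimension group in degree $0$, together with an abelian group in degree $1$ carrying an order-ideal-indexed family of subgroups --- and simultaneously exhibits them as inductive limits of the building blocks appearing in \ref{item: O3}. The degree-$0$ datum is $(\Gamma,\Gamma_+)$, which is a dimension group by Lemma \ref{lemma: ConstrK0Kirchberg}\ref{lemma: dimgroup}; the degree-$1$ datum is $H_1\oplus G$; and the linking datum is the assignment $(h_0,g_0)\mapsto H_1\oplus I(g_0)$. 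Before invoking the theorem I would check the properties such a family must have: it is \emph{monotone}, since $(h_0,g_0)\le(h_0',g_0')$ in $\Gamma$ forces $g_0\le g_0'$ in $G$ and hence $I(g_0)\subseteq I(g_0')$; it is \emph{additive}, i.e. $I(g_0+g_0')=I(g_0)+I(g_0')$ for $g_0,g_0'\in G_+$, because $I(g_0)+I(g_0')$ is an order ideal of the dimension group $G$ (Lemma \ref{lemma: PropGKirchberg}\ref{item: ConstrG4}) that contains $g_0+g_0'$ and is contained in $I(g_0+g_0')$; and it is \emph{exhaustive}, since every element of $H_1\oplus G$ lies in $H_1\oplus I(g_0)$ for some $g_0\in G_+\setminus\{0\}$, using that $G$ is directed and $G_+\ne\{0\}$. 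Matching the cone produced by Theorem 4.28 to the formula in the statement then amounts to identifying the order ideals of $\Gamma$ as the subgroups $H_0\oplus I(g_0)$, which is Lemma \ref{lemma: ConstrK0Kirchberg}\ref{item: IdealGamma}.

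Granting this, Theorem 4.28 yields \ref{item: O0}, \ref{item: O1}, \ref{item: O2} and \ref{item: O3} simultaneously. For \ref{item: O3} the mechanism is that, by the Effros--Handelman--Shen theorem \cite{DimGroup80}, $\Gamma$ is the inductive limit of a sequence of finite direct sums of copies of $\mathbb{Z}$, and the degree-$1$ group and the linking family can be carried compatibly along this sequence, producing the system $\Lambda_{*1}\to\Lambda_{*2}\to\cdots$ of finite direct sums of copies of the building block $\mathbb{Z}\oplus\mathbb{Z}$ (ordered by strict positivity in the first coordinate) with limit $(\Lambda_*,(\Lambda_*)_+)$. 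The weak divisibility of $\Gamma_+$ from Lemma \ref{lemma: ConstrK0Kirchberg}\ref{lemma:LargeDen} is what permits the building blocks to be taken in this minimal $\mathbb{Z}\oplus\mathbb{Z}$ form --- equivalently, it forces the A$\mathbb{T}$-algebra eventually produced from $(\Lambda_*,(\Lambda_*)_+)$ to have no nonzero finite-dimensional quotient --- so I would invoke it at the point where Theorem 4.28 requires it.

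Anything in the statement not delivered verbatim by Theorem 4.28 I would verify directly. Items \ref{item: O0} and \ref{item: O1} are then routine: closure of $(\Lambda_*)_+$ under addition uses that a sum of strictly positive functions on $S$ is strictly positive together with monotonicity and additivity of $g_0\mapsto I(g_0)$; properness, directedness and countability are immediate; and unperforation reduces to unperforation of $\Gamma$ together with the fact that the order ideals of the rational vector space $G$ are saturated under division by integers (if $ng_1\in I(g_0)$ then $-mg_0\le ng_1\le mg_0$ for some $m$, and for $m'$ with $m'n\ge m$ one gets $n(m'g_0-g_1)\ge 0$, hence $m'g_0-g_1\ge 0$ by unperforation of $G$, and symmetrically, so $g_1\in I(g_0)$). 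The main obstacle is \ref{item: O2}: proving Riesz decomposition means, given $x\le y_1+y_2$ in $(\Lambda_*)_+$, first splitting the degree-$0$ part of $x$ by Riesz decomposition in the dimension group $\Gamma$ --- taking care that a piece of this splitting with vanishing $G$-component is forced to equal $0$, since $\Gamma_+$ contains no nonzero element with trivial $G$-part --- and then splitting the degree-$1$ $G$-component of $x$ inside the order ideals generated by those pieces, using the identity $I(d_0^{(1)}+d_0^{(2)})=I(d_0^{(1)})+I(d_0^{(2)})$ and an auxiliary Riesz interpolation inside $G$ to make the constraints on the complementary pieces come out right, while the $H_1$-component is split arbitrarily; Riesz interpolation then follows from Riesz decomposition for directed groups. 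This interlocking of the degree-$0$ dimension-group combinatorics with the degree-$1$ ideal constraints is the heart of the matter, and is exactly what Theorem 4.28 is designed to handle.
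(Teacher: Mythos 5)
Your overall strategy --- feeding the ideal-indexed family $H_0\oplus p(I)\mapsto H_1\oplus p(I)$ into Elliott's Theorem 4.28 of \cite{ElliottRR0II} --- is exactly the route the paper takes, and your verifications of monotonicity, additivity and exhaustiveness of the family are the right preliminary checks. The unperforation argument in \ref{item: O1} is also essentially the paper's.

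There is, however, a genuine gap at \ref{item: O2}. The Riesz decomposition conclusion of Theorem 4.28 is \emph{not} automatic for a graded ordered group of this shape: it requires, besides the family preserving finite sums and intersections, that $(\Gamma,\Gamma_+)$ have \emph{no liminary subquotients}, and the paper proves and invokes exactly this (Lemma \ref{lemma: ConstrK0Kirchberg}\ref{item: liminary}). Your proposal never mentions this hypothesis, and your fallback direct argument hand-waves at precisely the point where it matters. To see that something beyond "split the degree-$0$ part by Riesz in $\Gamma$, then split the degree-$1$ part, with the $H_1$-component arbitrary" is needed, take $\Gamma=\mathbb{Z}$ with its usual order and attach the degree-one group $\mathbb{Z}/2$ to the unique nonzero ideal: then $(1,1)\le(1,0)+(1,0)$, but the only positive elements dominated by $(1,0)$ are $0$ and $(1,0)$ itself, so $(1,1)$ admits no Riesz decomposition. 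The obstruction is that a piece $c^i$ of the degree-$0$ splitting, or its complement $(a_0^i,b_0^i)-c^i$, may be nonzero yet generate an order ideal too small to absorb the corresponding degree-$1$ $G$-component; ruling this out is exactly what the no-liminary-subquotient condition (here ultimately coming from divisibility of $G=\mathbb{Q}G_0$) accomplishes. Your "auxiliary Riesz interpolation inside $G$" gestures at this but does not identify the property that makes it work, so as written the proof of \ref{item: O2} is incomplete. (A smaller point: the paper obtains \ref{item: O3} from \cite[Theorem 5.2]{Ell90} together with torsion-freeness of $H_1\oplus G$, rather than from Theorem 4.28; your Effros--Handelman--Shen sketch also silently uses that torsion-freeness when realising the degree-one groups as limits of copies of $\mathbb{Z}$, and this should be said. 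Note also that weak divisibility of $\Gamma_+$ is not used in this lemma at all --- it enters only later, in Lemma \ref{lemma: ConstrBKirch}, to get $\Z$-stability of $B$.)
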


\begin{proof}
Conditions \ref{item: O0} and \ref{item: O2} follow as a direct application of \cite[Theorem 4.28]{ElliottRR0II}. It is worth noting that in \cite{ElliottRR0II}, the term ideal stands for what is called order ideal in this paper (see \cite[Remark 4.27]{ElliottRR0II}).

We first need to define a map from order ideals of $\Gamma$ to subgroups of $H_1\oplus G$.\ If $I$ is a nonzero order ideal of $\Gamma$, then $I=H_0\oplus p(I)$ by \ref{item: IdealGamma} of Lemma \ref{lemma: ConstrK0Kirchberg}.\ We then consider the map from order ideals of $\Gamma$ to subgroups of $H_1\oplus G$ given by $H_0\oplus p(I)\mapsto H_1\oplus p(I)$ which maps $0$ to $0$.\ This map also preserves inclusions, upward directed unions, and maps $\Gamma$ into $H_1\oplus G$.\ Condition \ref{item: O0} now follows from \cite[Theorem 4.28]{ElliottRR0II}.

We now prove \ref{item: O1}.\ Let $((h_0,g_0),(h_1,g_1))\in \Lambda_*$ and $n\in\mathbb{N}$ such that $n((h_0,g_0),(h_1,g_1))\in (\Lambda_*)_+$.\ Recall from Proposition \ref{prop: SESRordam} that $H_0$ and $H_1$ are torsion free.\ Then $\Lambda_*$ is torsion free, so we can assume that $(h_0,g_0)\in \Gamma\setminus\{0\}$.\ By \ref{lemma: dimgroup} of Lemma \ref{lemma: ConstrK0Kirchberg}, $\Gamma$ is a dimension group and hence unperforated, so $(h_0,g_0)\in\Gamma_+$.\ Moreover, since $n((h_0,g_0),(h_1,g_1))\in (\Lambda_*)_+$, we have that $ng_1$ is in the order ideal of $G$ generated by $ng_0$, say $I_0$.\ As $g_0$ is positive, it follows that $g_0\in I_0$, so $I_0$ is generated by $g_0$.\ Since $ng_1\in I_0$, there exists $k\in\mathbb{N}$ such that $$-kng_0\leq ng_1\leq kng_0$$ in $I_0$.\ Recall from Lemma \ref{lemma: PropGKirchberg} that $G=\mathbb{Q}G$. In particular, $G$ is divisible, so it follows that $-kg_0\leq g_1\leq kg_0$, which implies that $0\leq g_1+kg_0\leq 2kg_0$.\ This yields that $g_1+kg_0\in I_0$ and so $g_1\in I_0$.\ Thus $((h_0,g_0),(h_1,g_1))\in (\Lambda_*)_+$, which shows \ref{item: O1}.

By \ref{lemma: dimgroup} of Lemma \ref{lemma: ConstrK0Kirchberg}, $(\Gamma,\Gamma_+)$ is a dimension group, so it is weakly unperforated and satisfies the Riesz interpolation property, which coincides with the Riesz decomposition property (see \cite[Proposition 2.1]{goodearlbook}).\ The correspondence $H_0\oplus p(I)\mapsto H_1\oplus p(I)$ preserves finite sums and intersections.\ Moreover, no subquotient of $\Gamma$ is liminary by \ref{item: liminary} of Lemma \ref{lemma: ConstrK0Kirchberg}, so $\Lambda_*$ satisfies the Riesz decomposition property by \cite[Theorem 4.28]{ElliottRR0II}, thus proving \ref{item: O2}.

Condition \ref{item: O3} follows from \cite[Theorem 5.2]{Ell90} and the fact that $H_1\oplus G$ is torsion free.
\end{proof}

To apply classification results, we will need to construct an A$\mathbb{T}$-algebra which is $\Z$-stable.\ Building on Winter's techniques from \cite{Win12}, in \cite{ZStableNonSimple}, Robert and Tikuisis show that under some extra assumptions, finite nuclear dimension implies $\Z$-stability even in the nonsimple setting.\ We will use this result to prove that the A$\mathbb{T}$-algebra we obtain is $\Z$-stable.

\begin{lemma}\label{lemma: ConstrBKirch}
There exists a separable stable $A\mathbb{T}$-algebra $B$ with real rank zero such that the following conditions hold:
\begin{enumerate}[label=\textit{(\roman*)}]
\item $(K_0(B),K_0(B)_+,K_1(B))\cong (\Gamma,\Gamma_+,H_1\oplus G)$; \label{item: B1}

\item $D_*(B)\cong K_*(B)_+\cong (\Lambda_*)_+$;\label{item: B3}

\item $B$ is $\Z$-stable;\label{item: B2}
\end{enumerate}
\end{lemma}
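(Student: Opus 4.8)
The plan is to assemble the A$\mathbb{T}$-algebra $B$ from the purely $K$-theoretic data built in the preceding lemmas by invoking Elliott's range-of-the-invariant theorem for A$\mathbb{T}$-algebras of real rank zero (the theorem quoted just after Theorem~\ref{thm: ElliottAT}), and then upgrade it to be $\Z$-stable using the Robert--Tikuisis result that finite nuclear dimension forces $\Z$-stability in the nonsimple setting.

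\textbf{Step 1: Produce $B$ realising the graded ordered $K$-group.} By Lemma~\ref{lemma: OrderAT}\ref{item: O3}, the graded ordered group $(\Lambda_*,(\Lambda_*)_+)$ is the inductive limit of a sequence of finite direct sums of basic building blocks $\mathbb{Z}\oplus\mathbb{Z}$ with the order determined by strict positivity in the first coordinate, and by Lemma~\ref{lemma: OrderAT}\ref{item: O2} it has the Riesz interpolation property. Hence the range-of-the-invariant theorem from \cite[Theorem 8.1]{elliottclassifRR0} applies and yields a separable stable A$\mathbb{T}$-algebra $B$ of real rank zero with $K_*(B)\cong \Lambda_*$ and $K_*(B)_+\cong (\Lambda_*)_+$ as graded ordered groups. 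Looking at the grading, the degree-zero part gives $(K_0(B),K_0(B)_+)\cong(\Gamma,\Gamma_+)$ and the degree-one part gives $K_1(B)\cong H_1\oplus G$, which is \ref{item: B1}. For \ref{item: B3}, the isomorphism $K_*(B)_+\cong(\Lambda_*)_+$ is already part of the output of the theorem; the statement $D_*(B)\cong K_*(B)_+$ holds because $B$ is stable, so every element of $K_*(B)_+$ is represented by a projection in $B$ itself (a stable algebra has an approximate unit of projections, and $M_n(B)\cong B$), whence $D_*(B)=K_*(B)_+$ by definition of the graded dimension range in Section~\ref{sect: AT}.

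\textbf{Step 2: Arrange $\Z$-stability.} This is where care is needed, since $B$ is in general not simple. Following the strategy in the proof of Theorem~\ref{thm: InvRange}, I would observe that the A$\mathbb{T}$-algebra produced by \cite[Theorem 8.1]{elliottclassifRR0} is an inductive limit of algebras of the form $\bigoplus_j M_{k_j}(C(\mathbb{T}))$ (tensored with $\mathcal K$, or with finite-dimensional corner data, depending on the precise building blocks), each of which has nuclear dimension at most one. Since nuclear dimension does not increase under inductive limits with injective connecting maps (\cite[Proposition 2.3]{nucdim}), $B$ has finite — indeed at most one-dimensional — nuclear dimension. Here the hypotheses verified in Lemma~\ref{lemma: ConstrK0Kirchberg} are crucial: \ref{lemma: InvIdeals} together with \ref{item: liminary} and the weak divisibility \ref{lemma:LargeDen} of $\Gamma_+$ ensure $B$ has no nonzero elementary (type I) ideals and, more to the point, that its primitive ideal space is well-behaved enough for the Robert--Tikuisis theorem from \cite{ZStableNonSimple} to apply. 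Concretely, finite nuclear dimension plus the absence of nonzero elementary subquotients (coming from Lemma~\ref{lemma: ConstrK0Kirchberg}\ref{item: liminary}, equivalently the weak divisibility of $K_0$) gives $\Z$-stability of $B$ by \cite{ZStableNonSimple}, which is \ref{item: B2}. Alternatively, and perhaps more cleanly, one can tensor each building block in the inductive system by $\Z$ before taking the limit — since tensoring a building block $M_k(C(\mathbb{T}))\otimes\mathcal K$ by $\Z$ does not change its $K$-theory or ordered $K$-theory (as $\Z$ is $KK$-equivalent to $\mathbb{C}$ and has trivial ordered $K_0$) — so that the limit $B\otimes\Z\cong B$ has the same invariant while being manifestly $\Z$-stable; one then re-runs Step 1's identification of the invariant.

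\textbf{The main obstacle} is Step 2, specifically checking that the technical hypotheses of the Robert--Tikuisis $\Z$-stability theorem are met by a \emph{nonsimple} A$\mathbb{T}$-algebra; this is exactly where Lemma~\ref{lemma: ConstrK0Kirchberg}\ref{item: liminary} and \ref{lemma:LargeDen} are used, and it is the reason those somewhat delicate properties were established. If the ``tensor-by-$\Z$ in the inductive system'' route is taken instead, the obstacle shifts to confirming that this modification preserves the graded ordered $K$-group on the nose — which follows from the Künneth theorem and the fact that the ordering on $\Lambda_*$ is determined by strict positivity in the $K_0$ first coordinate, a property stable under tensoring with $\Z$ — after which \ref{item: B1}, \ref{item: B3} go through verbatim as in Step 1.
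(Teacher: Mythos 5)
Your primary route is essentially the paper's proof: Lemma~\ref{lemma: OrderAT}\ref{item: O2} and \ref{item: O3} feed into \cite[Theorem 8.1]{elliottclassifRR0} to produce the stable A$\mathbb{T}$-algebra $B$ of real rank zero with the prescribed graded ordered $K$-group, stability gives $D_*(B)=K_*(B)_+$, and $\Z$-stability comes from \cite{ZStableNonSimple} using the weak divisibility established in Lemma~\ref{lemma: ConstrK0Kirchberg}\ref{lemma:LargeDen}. Two small corrections to how you verify the hypotheses of the Robert--Tikuisis theorem: it is the weak divisibility \ref{lemma:LargeDen} (via the results of \cite{NowhereScattered}), not the non-liminary condition \ref{item: liminary}, that rules out nonzero elementary ideal quotients (the liminary condition is consumed earlier, in establishing the Riesz decomposition property of $\Lambda_*$ in Lemma~\ref{lemma: OrderAT}); and the paper invokes finite \emph{decomposition rank} together with real rank zero to get the remaining hypotheses of \cite[Corollary 7.11]{ZStableNonSimple} (no quotient has a simple purely infinite ideal; the primitive ideal space has a basis of compact open sets), which finite nuclear dimension alone does not obviously supply.

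Your proposed alternative --- tensoring each building block by $\Z$ before taking the limit --- should be discarded: the resulting limit is no longer an A$\mathbb{T}$-algebra, and the A$\mathbb{T}$-structure together with real rank zero is not incidental here. It is exactly what licenses the application of Elliott's classification of automorphisms of A$\mathbb{T}$-algebras of real rank zero (Theorem~\ref{thm: ElliottAT}) to $B$ in Lemma~\ref{lemma: ConstrCrossProdKirch}, where the automorphism $\gamma$ with $K_*(\gamma)=(\kappa_0\oplus\alpha)\oplus(\kappa_1\oplus\alpha)$ is produced. A $\Z$-stabilised limit would satisfy \ref{item: B1}--\ref{item: B2} but would break the downstream argument, which is why the paper takes the (genuinely necessary) detour through \cite{ZStableNonSimple}.
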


\begin{proof}
Consider the graded group $\Lambda_*=\Gamma\oplus (H_1\oplus G)$ as in Lemma \ref{lemma: OrderAT}.\ Combining \ref{item: O2} and \ref{item: O3} of Lemma \ref{lemma: OrderAT} with \cite[Theorem 8.1]{elliottclassifRR0}, it follows that there exists a separable stable A$\mathbb{T}$-algebra $B$ with real rank zero such that \ref{item: B1} holds and $K_*(B)_+\cong (\Lambda_*)_+$. Since $B$ is stable, we can identify $K_*(B)_+$ with the graded dimension range $D_*(B)$ (see Section \ref{sect: AT}), so \ref{item: B3} is also satisfied.

We will prove that $B$ is $\Z$-stable. Since $B$ is an A$\mathbb{T}$-algebra, it follows that $B$ has finite decomposition rank (see \cite[Example 4.2]{decomprank}). We are going to use {\cite[Corollary 7.11]{ZStableNonSimple}}.\ Since $B$ has finite decomposition rank and real rank zero, no quotient of $B$ has a simple purely infinite ideal and the space of primitive ideals of $B$ has a basis of compact open sets (see the remark (a) after \cite[Corollary 7.11]{ZStableNonSimple}).\ Moreover, the Murray-von Neumann semigroup of $B$ is weakly divisible by \ref{lemma:LargeDen} of Lemma \ref{lemma: ConstrK0Kirchberg}, which yields that $B$ has no nonzero elementary ideal quotients (see \cite[Theorem 9.1]{NowhereScattered} together with the implication (1)$\implies$ (3) of \cite[Theorem B]{NowhereScattered}).\ Therefore, $B$ is $\Z$-stable by {\cite[Corollary 7.11]{ZStableNonSimple}}.
\end{proof}

In the next lemma, we will realise the automorphisms $\kappa_0\oplus\alpha$ and $\kappa_1\oplus\alpha$ as the $K$-theory of an automorphism $\gamma$ of $B$.\ Moreover, we choose $\gamma$ such that the crossed product $B\rtimes_\gamma\mathbb{Z}$ is simple using a variation of a result by Kishimoto from \cite{SimpleCrossedProdKishimoto}.\ Recall that an automorphism $\sigma$ on a $\C$-algebra $E$ is called \emph{properly outer} if for every nonzero $\sigma$-invariant closed two-sided ideal $I$ of $E$ and for every unitary $u$ in $\mathcal{M}(I)$ one has $\|\sigma|_I-\Ad(u)\|=2$.

\begin{lemma}\label{lemma: ConstrCrossProdKirch}
Let $B$ be the A$\mathbb{T}$-algebra obtained in Lemma \ref{lemma: ConstrBKirch}.\ Then there exists an automorphism $\gamma$ of $B$ such that the following conditions hold:
\begin{enumerate}[label=\textit{(\roman*)}]
\item $K_0(\gamma)=\kappa_0\oplus\alpha$ and $K_1(\gamma)=\kappa_1\oplus\alpha$;\label{item: CP1}
\item $B\rtimes_{\gamma}\mathbb{Z}$ is simple;\label{item: CP2}
\item $B\rtimes_{\gamma}\mathbb{Z}$ is $\Z$-stable;\label{item: CP3}
\item the canonical restriction induces a bijection between the cone of densely defined lower semicontinuous traces on $B\rtimes_{\gamma}\mathbb{Z}$ and the cone of $\gamma$-invariant densely defined lower semicontinuous traces on $B$.\label{item: CP4}
\end{enumerate}
\end{lemma}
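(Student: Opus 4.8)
The strategy is to produce the automorphism $\gamma$ by Elliott's classification of automorphisms of A$\mathbb{T}$-algebras of real rank zero (Theorem \ref{thm: ElliottAT}), then verify simplicity of the crossed product via a Kishimoto-type criterion, deduce $\Z$-stability from finite Rokhlin dimension, and finally apply Lemma \ref{lemma: TracesCrossProd}.

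\emph{Step 1: Constructing $\gamma$.} The pair $\alpha_* := (\kappa_0 \oplus \alpha) \oplus (\kappa_1 \oplus \alpha)$ is a graded group automorphism of $K_*(B) \cong \Gamma \oplus (H_1 \oplus G)$. To invoke Theorem \ref{thm: ElliottAT} I must check that $\alpha_*$ preserves the graded dimension range $D_*(B) \cong (\Lambda_*)_+$. Since $\alpha$ is an order automorphism of $(G,G_+)$ with $\alpha(G_+) = G_+$ (Lemma \ref{lemma: PropGKirchberg}\ref{item: ConstrG3}) and $\kappa_0$ is a group automorphism of $H_0$, the map $\kappa_0 \oplus \alpha$ preserves $\Gamma_+$ (the positivity only sees the $G$-component via $p$, and $\alpha$ fixes $G_+$). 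Because $\alpha$ preserves $G_+$, it permutes the order ideals of $G$, and in particular $\alpha(I(g_0)) = I(\alpha(g_0))$; combined with the fact that $\kappa_1 \oplus \alpha$ acts as $\alpha$ in the $G$-coordinate, one checks directly from the definition of $(\Lambda_*)_+$ in Lemma \ref{lemma: OrderAT} that $\alpha_*((\Lambda_*)_+) = (\Lambda_*)_+$. Hence Theorem \ref{thm: ElliottAT} yields an automorphism $\gamma$ of $B$ with $K_*(\gamma) = \alpha_*$, giving \ref{item: CP1}.

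\emph{Step 2: Simplicity of $B \rtimes_\gamma \mathbb{Z}$.} By Kishimoto's criterion (the variant cited from \cite{SimpleCrossedProdKishimoto}), it suffices to show that $\gamma^n$ is properly outer for every $n \neq 0$ and that $B$ has no nontrivial $\gamma$-invariant ideals. The latter follows from Lemma \ref{lemma: ConstrK0Kirchberg}\ref{lemma: InvIdeals}: ideals of $B$ correspond to order ideals of $(\Gamma, \Gamma_+)$ (using real rank zero / the ordered $K_0$ picture), and the only $(\kappa_0 \oplus \alpha)$-invariant ones are $\{0\}$ and $\Gamma$. Proper outerness should be arranged either as part of the construction of $\gamma$ (perturbing within the same $KK$-class, as Kishimoto's existence theorem allows) or deduced from the fact that on each invariant ideal $\gamma^n$ acts nontrivially on $K$-theory — here Lemma \ref{lemma: BStates} is the key input, since it rules out $(\kappa_0 \oplus \alpha)$-invariant positive homomorphisms (traces), which obstructs $\gamma^n$ from being approximately inner on any invariant subquotient. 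This gives \ref{item: CP2}.

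\emph{Steps 3 and 4: $\Z$-stability and traces.} For \ref{item: CP3}, note that $B$ has an approximate unit of projections (it is a stable A$\mathbb{T}$-algebra of real rank zero), and the construction of $\gamma$ can be arranged to have finite Rokhlin dimension — this is where I would again lean on Kishimoto-type absorption, or on the fact that $B$ is $\Z$-stable (Lemma \ref{lemma: ConstrBKirch}\ref{item: B2}) together with an automatic-finite-Rokhlin-dimension result for aperiodic automorphisms of such algebras. Finite Rokhlin dimension of $\gamma$ then forces $B \rtimes_\gamma \mathbb{Z}$ to have finite nuclear dimension, and since it is simple (Step 2) it is $\Z$-stable. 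Finally, \ref{item: CP4} is immediate from Lemma \ref{lemma: TracesCrossProd}, applied with $A = B$: $B$ is separable, has an approximate unit of projections, and $\gamma$ has finite Rokhlin dimension, so the restriction map from densely defined lower semicontinuous traces on $B \rtimes_\gamma \mathbb{Z}$ to $\gamma$-invariant ones on $B$ is a bijection; it is visibly affine and continuous, hence a bijection of cones. The main obstacle I expect is \emph{Step 2} — arranging proper outerness of all powers of $\gamma$ simultaneously with the prescribed $K$-theory — since Elliott's classification of automorphisms only controls the $K$-theoretic data and one needs an additional genericity/perturbation argument (in the spirit of Kishimoto's work on outer actions with prescribed invariants) to secure outerness on every invariant subquotient; finite Rokhlin dimension in Step 3 is closely tied to this and may need to be established by the same mechanism.
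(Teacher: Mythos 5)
Your overall architecture matches the paper's proof: classification of automorphisms of A$\mathbb{T}$-algebras of real rank zero to realise the prescribed $K$-theory, a Kishimoto-type criterion for simplicity, finite Rokhlin dimension to get finite nuclear dimension and hence $\Z$-stability, and Lemma \ref{lemma: TracesCrossProd} for the traces. Step 1 and Step 4 are essentially the paper's argument. However, there is a genuine gap in your Step 2, precisely at the point you flag as the main obstacle, and the repair you sketch points at the wrong tool. Lemma \ref{lemma: BStates} rules out $(\kappa_0\oplus\alpha)$-invariant positive homomorphisms normalised at $v$; in the paper this is used later (in Lemma \ref{lemma: CrossProdCorner}) to show the corner has no traces and is therefore purely infinite. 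It does not obstruct $\gamma^n$ from being close to an inner automorphism on an invariant subquotient, so it cannot deliver proper outerness. What the paper actually does is elementary and $K$-theoretic: since $\alpha^k\neq\id$ for $k\neq 0$ and $B$ is stable, every class in $K_0(B)$ is realised by a projection, so there is a projection $p_k$ with $[\gamma^k(p_k)]_0\neq[p_k]_0$; hence $\gamma^k(p_k)$ is not Murray--von Neumann equivalent to $up_ku^*$ for any unitary $u\in\mathcal{M}(B)$, giving $\|\gamma^k(p_k)-up_ku^*\|=1$. Combined with the fact that the only $(\kappa_0\oplus\alpha)$-invariant order ideals of $\Gamma$ are trivial (Lemma \ref{lemma: ConstrK0Kirchberg}\ref{lemma: InvIdeals}), this yields proper outerness of every $\gamma^k$ via the implication (iii)$\implies$(ii) of the cited criterion in \cite{SimpleCrossedProd}, and then simplicity of $B\rtimes_\gamma\mathbb{Z}$ follows since $B$ has no nontrivial $\gamma$-invariant ideals (ideals of an A$\mathbb{T}$-algebra of real rank zero are detected by order ideals of $K_0$).

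A second, smaller imprecision is the order of operations around finite Rokhlin dimension in Step 3. This is not "arranged" after the fact by a genericity argument: the paper first obtains $\gamma'$ from Elliott's classification and then \emph{replaces} it by an automorphism $\gamma$ with finite Rokhlin dimension inducing the same maps on $K_*$ (via \cite[Lemma 4.7]{bhishan}, building on \cite{RokhlinDim19}); all subsequent steps (finite nuclear dimension of the crossed product, proper outerness, simplicity, $\Z$-stability via \cite[Corollary 8.7]{T14}, and the trace bijection of Lemma \ref{lemma: TracesCrossProd}) are carried out for this $\gamma$. Once you insert the correct proper outerness mechanism and make the replacement step explicit, your argument coincides with the paper's.
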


\begin{proof}
Consider the automorphism of $\Gamma\oplus (H_1\oplus G)$ given by $(\kappa_0\oplus\alpha)\oplus(\kappa_1\oplus\alpha)$. Since $\alpha(G_+)=G_+$ by \ref{item: ConstrG3} of Lemma \ref{lemma: PropGKirchberg}, $(\kappa_0\oplus\alpha)\oplus(\kappa_1\oplus\alpha)$ preserves $(\Lambda_*)_+$. Therefore, we can apply Elliott's classification of automorphisms of A$\mathbb{T}$-algebras with real rank zero.\ Thus, by Theorem \ref{thm: ElliottAT}, there exists $\gamma'\in \Aut(B)$ such that $K_0(\gamma')=\kappa_0\oplus\alpha$ and $K_1(\gamma')=\kappa_1\oplus\alpha$.

We claim that we can replace $\gamma'$ by another automorphism $\gamma$ which induces the same maps in $K$-theory and the crossed product has finite nuclear dimension.\ Building on the work in \cite{RokhlinDim19}, we can take an automorphism $\gamma$ of $B$ with finite Rokhlin dimension such that $K_i(\gamma)=K_i(\gamma')$ for $i=0,1$ (\cite[Lemma 4.7]{bhishan}).\ Since $B$ has finite nuclear dimension and $\gamma$ has finite Rokhlin dimension, it follows that the crossed product $B\rtimes_{\gamma}\mathbb{Z}$ has finite nuclear dimension (\cite[Theorem 6.2]{RokhlinDim19} or \cite[Theorem 3.1]{HP15}).

We now show that $B\rtimes_{\gamma}\mathbb{Z}$ is simple.\ Since $\alpha^k$ is non-trivial for all $k\neq 0$, then $K_0(\gamma)^k$ is non-trivial, which implies that no non-trivial power of $\gamma$ is inner.\ Moreover, since $B$ is stable, the fact that $K_0(\gamma)^k$ is non-trivial, implies that there exists a projection $p_k\in B$ such that $\gamma^k(p_k)$ is not equivalent to $p_k$.\ Then, if $u$ is a unitary in $\mathcal{M}(B)$, $\gamma^k(p_k)$ is not equivalent to $up_ku^*$, so $$\|\gamma^k(p_k)-up_ku^*\|=1.$$ 

By \ref{lemma: InvIdeals} of Lemma \ref{lemma: ConstrK0Kirchberg}, the only order ideals $I$ in $\Gamma$ such that $(\kappa_0\oplus\alpha)(I)=I$ are $I=\{0\}$ and $I=\Gamma$.\ Thus, $\gamma^k$ is properly outer by the implication (iii)$\implies$(ii) in {\cite[Theorem 6.6]{SimpleCrossedProd}}. Moreover, by \ref{lemma: InvIdeals} of Lemma \ref{lemma: ConstrK0Kirchberg}, $K_0(B)$ has no non-trivial $K_0(\gamma)$-invariant ideals. Since $B$ is an A$\mathbb{T}$-algebra of real rank zero, this implies that $B$ has no non-trivial $\gamma$-invariant ideals (\cite[Proposition 1.5.3]{rordambook}), and hence the crossed product $B\rtimes_\gamma \mathbb{Z}$ is simple by {\cite[Theorem 7.2]{SimpleCrossedProd}}.\ Furthermore, $B\rtimes_\gamma \mathbb{Z}$ has finite nuclear dimension, so it is $\Z$-stable by \cite[Corollary 8.7]{T14}.

Note that $B$ has real rank zero, so it has an approximate unit of projections. Moreover, $\gamma$ has finite Rokhlin dimension. Hence, \ref{item: CP4} follows from Lemma \ref{lemma: TracesCrossProd}.
\end{proof}

We now have all the necessary ingredients to show that $A$ can be realised as a corner of a crossed product by the integers.

\begin{lemma}\label{lemma: CrossProdCorner}
Let $B$ the the stable A$\mathbb{T}$-algebra with real rank zero from Lemma \ref{lemma: ConstrBKirch}, and let $\gamma$ be the automorphism of $B$ from Lemma \ref{lemma: ConstrCrossProdKirch}. Then, there exists a projection $e\in B$ such that $e\left(B\rtimes_\gamma\mathbb{Z}\right)e\cong A$.
\end{lemma}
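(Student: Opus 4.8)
The goal is to identify $A$ with a full corner of $B \rtimes_\gamma \mathbb{Z}$, using the classification theorem for Kirchberg algebras (Kirchberg--Phillips). First I would record the $K$-theory of $C := B \rtimes_\gamma \mathbb{Z}$ via the Pimsner--Voiculescu exact sequence. For $i = 0, 1$ this gives a six-term sequence whose connecting maps involve $\id - K_i(\gamma) = \id - (\kappa_i \oplus \alpha)$. By Lemma~\ref{lemma: PropGKirchberg}\ref{item: ConstrG3}, $\id - \alpha$ is an automorphism of $G$, so $\id - (\kappa_i \oplus \alpha)$ on $H_i \oplus G$ decomposes as $(\id - \kappa_i) \oplus (\id - \alpha)$; since $\id - \alpha$ is invertible, the cokernel of this map is $H_i / (\id - \kappa_i) H_i \cong K_i(A)$ by Proposition~\ref{prop: SESRordam}, and its kernel is trivial because $\id - \kappa_i$ is injective (again Proposition~\ref{prop: SESRordam}). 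Thus the PV sequence collapses: $K_0(C) \cong \operatorname{coker}(\id - K_1(\gamma)) \cong K_1(A)$ and $K_1(C) \cong \operatorname{coker}(\id - K_0(\gamma)) \cong K_0(A)$ — with the degree shift characteristic of crossed products by $\mathbb{Z}$. One must check the connecting maps in PV vanish appropriately; this follows from the injectivity statements just noted.

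\emph{Next I would verify that $C$ is a unital-corner-of Kirchberg algebra.} We already know from Lemma~\ref{lemma: ConstrCrossProdKirch} that $C = B \rtimes_\gamma \mathbb{Z}$ is simple, $\Z$-stable, nuclear (as $B$ is nuclear, crossed products by $\mathbb{Z}$ preserve nuclearity), and satisfies the UCT (closed under crossed products by $\mathbb{Z}$, cf.\ Proposition~\ref{prop: PermanenceProp}). It remains to see $C$ is purely infinite. By Lemma~\ref{lemma: BStates}, there are no positive homomorphisms $\Gamma \to \mathbb{R}$ sending $v$ to $1$ that are $(\kappa_0 \oplus \alpha)$-invariant; more relevantly, Lemma~\ref{lemma: ConstrCrossProdKirch}\ref{item: CP4} identifies densely defined lower semicontinuous traces on $C$ with $\gamma$-invariant such traces on $B$, and via Proposition~\ref{thm: TracesAT} these correspond to $(\kappa_0 \oplus \alpha)$-invariant positive homomorphisms $K_0(B) = \Gamma \to \mathbb{R}$ — but Lemma~\ref{lemma: BStates} (with $\beta = 0$, using $\pi^{-1}(0) = \emptyset$) shows there are none. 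So $C$ is a simple, separable, nuclear, $\Z$-stable $\C$-algebra with no nonzero densely defined lower semicontinuous trace; by the Toms--Winter picture (a simple $\Z$-stable $\C$-algebra is either stably finite, hence has a trace after stabilizing, or purely infinite), $C$ is purely infinite. Hence $C$ is a stable Kirchberg algebra, or rather $C \otimes \mathcal{K}$ is, and any full corner of $C$ is a unital Kirchberg algebra satisfying the UCT (Proposition~\ref{prop: PermanenceProp}).

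\emph{Now choose the projection.} The element $v = (w, 1) \in \Gamma_+ = K_0(B)_+$ is represented, since $B$ is stable with real rank zero and hence has plenty of projections, by some projection $e \in B$ with $[e]_0 = v$. Because $B$ is simple with no bounded trace issues obstructing fullness — in fact any nonzero projection in a stable simple $\C$-algebra is full — $e$ is full in $B$, hence full in $C = B \rtimes_\gamma \mathbb{Z}$. By Proposition~\ref{prop: PermanenceProp}, $eCe$ is a unital, separable, simple, nuclear, $\Z$-stable $\C$-algebra satisfying the UCT, and it is purely infinite as a corner of the purely infinite $C$. So $eCe$ is a unital UCT Kirchberg algebra. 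By Kirchberg--Phillips classification, to conclude $eCe \cong A$ it suffices to produce an isomorphism of pointed $K$-theory: $(K_0(eCe), [1_{eCe}]_0, K_1(eCe)) \cong (K_0(A), [1_A]_0, K_1(A))$. Since $eCe$ is stably isomorphic to $C$ (Brown), $K_i(eCe) \cong K_i(C)$, and under the isomorphisms computed above, the class $[e]_0 = v = (w,1)$ maps to $q_0(w) = [1_A]_0$ under the map $\Gamma \to K_0(A)$ induced by $p$ composed with $q_0$ (here one traces through the PV identification of $K_1(C)$ with $\operatorname{coker}(\id - K_0(\gamma)) = \operatorname{coker}((\id-\kappa_0)\oplus(\id-\alpha))$, whose $G$-part is killed and whose $H_0$-part is exactly $K_0(A)$ via $q_0$). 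This matches the base point, and the $K_1$-groups match by construction, so Kirchberg--Phillips gives $eCe \cong A$.

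\emph{Expected main obstacle.} The routine-looking but genuinely delicate step is the careful bookkeeping in the Pimsner--Voiculescu sequence: one must confirm that the connecting maps vanish (using injectivity of $\id - \kappa_i$ and of $\id - \alpha$), track the degree shift correctly so that $[1_A]_0 \in K_0(A)$ lands in $K_1(C)$ rather than $K_0(C)$, and then verify that the class of the chosen projection $e$ is carried precisely to $[1_A]_0$ under the resulting identification — not merely to \emph{some} generator. Establishing pure infiniteness of $C$ is conceptually the other load-bearing point, but it follows cleanly once the absence of traces is in hand from Lemma~\ref{lemma: BStates} and Lemma~\ref{lemma: ConstrCrossProdKirch}\ref{item: CP4}.
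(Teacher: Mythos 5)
Your overall strategy coincides with the paper's: show $e(B\rtimes_\gamma\mathbb{Z})e$ is a unital UCT Kirchberg algebra by ruling out traces via Lemma \ref{lemma: BStates} and Lemma \ref{lemma: ConstrCrossProdKirch}\ref{item: CP4}, compute its $K$-theory by Pimsner--Voiculescu, and invoke Kirchberg--Phillips. However, there is a genuine error in your $K$-theory bookkeeping. You assert that $K_0(C)\cong\operatorname{coker}(\id-K_1(\gamma))\cong K_1(A)$ and $K_1(C)\cong\operatorname{coker}(\id-K_0(\gamma))\cong K_0(A)$, ``with the degree shift characteristic of crossed products by $\mathbb{Z}$.'' There is no such degree shift. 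The Pimsner--Voiculescu sequence reads
\[
K_i(B)\xrightarrow{\;\id-K_i(\gamma)\;}K_i(B)\xrightarrow{\;\iota_*\;}K_i(B\rtimes_\gamma\mathbb{Z})\xrightarrow{\;\partial\;}K_{1-i}(B)\to\cdots,
\]
so the inclusion $\iota\colon B\hookrightarrow B\rtimes_\gamma\mathbb{Z}$ preserves degree, and once the boundary maps are seen to vanish (using injectivity of $\id-\kappa_{1-i}$ and of $\id-\alpha$, exactly as you note) one gets $K_0(C)\cong\operatorname{coker}(\id-K_0(\gamma))\cong H_0/(\id-\kappa_0)(H_0)\cong K_0(A)$ and $K_1(C)\cong\operatorname{coker}(\id-K_1(\gamma))\cong K_1(A)$. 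The shift you have in mind belongs to Connes' Thom isomorphism for crossed products by $\mathbb{R}$ (or to Takai duality), not to $\mathbb{Z}$-crossed products.

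This is not a cosmetic slip: under your identification one would need $K_0(A)\cong K_1(A)$ for the lemma to hold, which fails in general, and your final step is internally inconsistent --- you place $[e]_0$, which is a $K_0$-class of a projection and hence lives in $K_0(eCe)\cong K_0(C)$, inside ``the PV identification of $K_1(C)$ with $\operatorname{coker}(\id-K_0(\gamma))$.'' With the degrees corrected, the rest of your argument goes through as in the paper: $\iota_*$ carries $[e]_0=v=(w,1)$ to the class of $w$ in $H_0/(\id-\kappa_0)(H_0)\cong K_0(A)$, which is $q_0(w)=[1_A]_0$, and Kirchberg--Phillips applies. A secondary, more minor point: rather than invoking the dichotomy for the non-unital stable algebra $C$, the paper passes directly to the unital corner $eCe$, shows it has no tracial states (any tracial state would extend to a densely defined lower semicontinuous trace on $C$ and then contradict Lemma \ref{lemma: BStates}), and concludes pure infiniteness from \cite[Corollary 5.1]{Rordam04}; your route is workable but requires more care about densely defined traces on non-unital simple algebras.
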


\begin{proof}
Recall that $v=(w,1)\in \Gamma_+$, where $w\in H_0$ is such that $q_0(w)=[1_A]_0$. Let $e$ be a projection in $B$ such that $[e]_0=v$ in $K_0(B)$. We will prove that $A\cong e\left(B\rtimes_\gamma\mathbb{Z}\right)e$ using the Kirchberg-Phillips classification theorem (\cite{kirchbergclass,phillipsclass}). First, we claim that $e\left(B\rtimes_\gamma\mathbb{Z}\right)e$ is a simple, separable, unital, nuclear, purely infinite $\C$-algebra satisfying the $\UCT$.

Note that $B\rtimes_\gamma\mathbb{Z}$ is simple by \ref{item: CP2} of Lemma \ref{lemma: ConstrCrossProdKirch}, so $e$ is a full projection. Moreover, $B$ is nuclear, satisfies the UCT, and $B\rtimes_\gamma\mathbb{Z}$ is $\Z$-stable by \ref{item: CP3} of Lemma \ref{lemma: ConstrCrossProdKirch}. Therefore, by Proposition \ref{prop: PermanenceProp}$, e\left(B\rtimes_\gamma\mathbb{Z}\right)e$ is a simple, separable, unital, nuclear, $\Z$-stable $\C$-algebra which satisfies the UCT. By {\cite[Corollary 5.1]{Rordam04}}, to show that $e\left(B\rtimes_\gamma\mathbb{Z}\right)e$ is purely infinite, it now suffices to prove that $e\left(B\rtimes_\gamma\mathbb{Z}\right)e$ has no quasitraces.\ Since $e\left(B\rtimes_\gamma\mathbb{Z}\right)e$ is unital and nuclear, any quasitrace is a trace by \cite{QuasiTraces}. Therefore, it suffices to show that $e\left(B\rtimes_\gamma\mathbb{Z}\right)e$ has no tracial states. If $\tau$ is a tracial state on $e\left(B\rtimes_\gamma\mathbb{Z}\right)e$, then it extends to a lower semicontinuous (possibly unbounded) trace $\tau'$ on $B\rtimes_\gamma\mathbb{Z}$ by \cite[Corollary 5.2]{nucdim}.\ By \ref{item: CP4} of Lemma \ref{lemma: ConstrCrossProdKirch}, the restriction of $\tau'$ to $B$ induces a positive group homomorphism $\tau'_*:K_0(B)\to \mathbb{R}$ such that $\tau'_*(v)=1$ and $\tau'_*\circ K_0(\gamma)=\tau'_*$.\ Recall from \ref{item: B1} of Lemma \ref{lemma: ConstrBKirch} and \ref{item: CP1} of Lemma \ref{lemma: ConstrCrossProdKirch} that $K_0(B)=\Gamma$ and $K_0(\gamma)=\kappa_0\oplus\alpha$.\ Thus, we obtain a contradiction with Lemma \ref{lemma: BStates}.\ Hence, $e\left(B\rtimes_\gamma\mathbb{Z}\right)e$ has no tracial states.\ Moreover, it is nuclear and $\Z$-stable, so it is purely infinite by {\cite[Corollary 5.1]{Rordam04}}.

Therefore, by the Kirchberg-Phillips' classification theorem, it suffices to check that $e\left(B\rtimes_\gamma\mathbb{Z}\right)e$ and $A$ have the same Elliott invariant. First note that $e\left(B\rtimes_\gamma\mathbb{Z}\right)e$ and $B\rtimes_\gamma\mathbb{Z}$ have the same $K$-groups. Applying the Pimsner-Voiculescu exact sequence (\cite[Theorem 2.4]{PV80}) to $B$ and the automorphism $\gamma$, we get the six-term exact sequence 
\[
\begin{tikzcd}
 H_0\oplus G\ar{r}{\id-K_0(\gamma)} & H_0\oplus G \ar{r} & K_0(B\rtimes_\gamma\mathbb{Z}) \ar{d} \\
 K_1(B\rtimes_\gamma\mathbb{Z}) \ar{u} & H_1\oplus G \ar{l}  & H_1\oplus G\ar{l}{\id-K_1(\gamma)}.
\end{tikzcd}
\] By \ref{item: CP1} of Lemma \ref{lemma: ConstrCrossProdKirch}, $\id_{H_1\oplus G}-K_1(\gamma)=(\id_{H_1}-\kappa_1)\oplus(\id_G-\alpha)$, which is injective by Proposition \ref{prop: SESRordam} and since $\id_G-\alpha$ is an automorphism of $G$ (\ref{item: ConstrG3} of Lemma \ref{lemma: PropGKirchberg}).\ Then, the downward map $K_0(B\rtimes_\gamma\mathbb{Z})\to H_1\oplus G$ is zero.\ This further implies that the map $H_0\oplus G\to K_0(B\rtimes_\gamma\mathbb{Z})$ is surjective, which yields that $$K_0(B\rtimes_\gamma\mathbb{Z})\cong(H_0\oplus G)/(\id_{H_0\oplus G}-K_0(\gamma))(H_0\oplus G).$$ By Lemma \ref{item: CP1} of \ref{lemma: ConstrCrossProdKirch}, $\id_{H_0\oplus G}-K_0(\gamma)=(\id_{H_0}-\kappa_0)\oplus(\id_G-\alpha)$.\ By \ref{item: ConstrG3} of Lemma \ref{lemma: PropGKirchberg}, $\id_G-\alpha$ is an automorphism of $G$, so $$K_0(B\rtimes_\gamma\mathbb{Z})\cong H_0/(\id_{H_0}-\kappa_0)(H_0)\cong K_0(A),$$where the last isomorphism is given by Proposition \ref{prop: SESRordam}. Note that $[e]_0=(w,1)$ in $K_0(B)$, where $w$ is mapped to $[1_A]_0$ by the isomorphism $H_0/(\id_{H_0}-\kappa_0)(H_0)\cong K_0(A)$. Therefore, the resulting isomorphism $K_0(e\left(B\rtimes_\gamma\mathbb{Z}\right)e)\to K_0(A)$ sends $[e]_0$ to $[1_A]_0$.

Further examining the Pimsner-Voiculescu exact sequence and using that $\id_{H_0\oplus G}-K_0(\gamma)$ is injective, it follows that the map $K_1(B\rtimes_\gamma\mathbb{Z})\to H_0\oplus G$ is zero.\ Therefore, we obtain the exact sequence 
 \[
\begin{tikzcd}
0 \ar{r}  & H_1\oplus G \ar{r}{\id-K_1(\gamma)} & H_1\oplus G\ar{r} & K_1(B\rtimes_\gamma\mathbb{Z}) \ar{r} & 0,
\end{tikzcd}
\] which yields that $K_1(B\rtimes_\gamma\mathbb{Z})\cong (H_1\oplus G)/(\id_{H_1\oplus G}-(\kappa_1\oplus\alpha))(H_1\oplus G)\cong H_1/(\id_{H_1\oplus G}-\kappa_1)(H_1)$, where the last identification follows as $\id_G-\alpha$ is an automorphism on $G$ (\ref{item: ConstrG3} of Lemma \ref{lemma: PropGKirchberg}).\ Hence $K_1(B\rtimes_\gamma\mathbb{Z})\cong K_1(A)$ by Proposition \ref{prop: SESRordam}.\ Thus, $e\left(B\rtimes_\gamma\mathbb{Z}\right)e\cong A$ by the Kirchberg-Phillips' classification theorem (see for example \cite[Theorem 8.4.1(iv)]{rordambook}).
\end{proof}

\subsection{KMS bundles on Kirchberg algebras}
In this subsection, we will finish the proof of Theorem \ref{thm:ExistenceKirchberg}. Recall from Lemmas \ref{lemma: ConstrBKirch} and \ref{lemma: ConstrCrossProdKirch} that $B$ is a stable $A\mathbb{T}$-algebra with real rank zero and $\gamma$ is an automorphism on $B$ with specified behaviour on $K$-theory such that $B\rtimes_\gamma\mathbb{Z}$ is $\Z$-stable.

Consider the dual action $\hat{\gamma}$ on $B\rtimes_\gamma\mathbb{Z}$ as a $2\pi$-periodic flow.\ Recall that $\hat{\gamma}_t(f)(x)=e^{-ixt}f(x)$ for any $t\in\mathbb{R}, f\in C_c(\mathbb Z, B)$ and $x\in\mathbb Z$, so that $\hat{\gamma}$ acts trivially on $B$. Since $e\in B$, $\hat{\gamma}$ restricts to an action on $e\left(B\rtimes_\gamma\mathbb{Z}\right)e$ which we denote by $\theta$.\ First, we need a result which will relate $\KMS$ states on $e\left(B\rtimes_\gamma\mathbb{Z}\right)e$ to positive homomorphisms on $K_0(B)$.\ The last statement of the following proposition is not explicitly contained in \cite[Lemma 4.1]{ET}. However, it is implied by the remark leading to \cite[Corollary 4.2]{ET}.

\begin{prop}\label{prop: tracesKMS}{\cite[Lemma 4.1]{ET}}
Let $D$ be a $\C$-algebra. Let $\rho$ be an automorphism of $D$ and let $q \in D$ a projection in $D$ which is full in $D\rtimes_\rho \mathbb{Z}$. Let $\hat{\rho}$ denote the restriction of the dual action on $D\rtimes_\rho \mathbb{Z}$ to $q(D\rtimes_\rho \mathbb{Z})q$ and consider it as a $2\pi$-periodic flow. If $P:D\rtimes_\rho\mathbb{Z}\to D$ is the canonical conditional expectation, then for each $\beta\in\mathbb{R}$, the map $\tau\mapsto \tau\circ P|_{q(D \rtimes_\rho \mathbb{Z})q}$ is an affine homeomorphism from the densely defined lower semicontinuous traces on $D$ satisfying $$\tau\circ\rho =e^{-\beta}\tau \quad \text{and} \quad \tau(q)=1,$$ onto the set of $\beta$-$\KMS$ states for the dual action $\hat{\rho}$ on $q(D \rtimes_\rho \mathbb{Z})q$. Moreover, the inverse is given by the map $\omega\mapsto\hat{\omega}|_D$, where $\hat{\omega}$ is a $\beta$-$\KMS$ weight for the dual action on $D\rtimes_\rho\mathbb{Z}$, which extends $\omega$.
\end{prop}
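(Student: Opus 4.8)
The plan is to recover this from the Elliott--Thomsen analysis of $\KMS$ states for a periodic dual action: the bijection itself is \cite[Lemma 4.1]{ET}, and the description of the inverse follows from the discussion preceding \cite[Corollary 4.2]{ET}. I would first work at the level of $D\rtimes_\rho\mathbb{Z}$ and only afterwards pass to the corner. Since $\hat\rho$ is $2\pi$-periodic, the canonical conditional expectation $P\colon D\rtimes_\rho\mathbb{Z}\to D$ is the average $P(x)=\frac{1}{2\pi}\int_0^{2\pi}\hat\rho_t(x)\,dt$, and $D$ is exactly the fixed-point algebra of $\hat\rho$; writing $u$ for the canonical unitary one has $\hat\rho_t(u)=e^{-it}u$, so $u$ is analytic with $\hat\rho_{-i\beta/2}(u)=e^{-\beta/2}u$.

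Given a densely defined lower semicontinuous trace $\tau$ on $D$ with $\tau\circ\rho=e^{-\beta}\tau$, I would set $\hat\tau:=\tau\circ P$. This is a densely defined lower semicontinuous weight on $D\rtimes_\rho\mathbb{Z}$, since the domain and lower semicontinuity of $\tau$ survive composition with the positive contractive expectation $P$; it is $\hat\rho$-invariant because $P\circ\hat\rho_t=P$; and the $\KMS$ identity need only be verified on the dense set of analytic elements $\sum_n b_nu^n$ with $b_n\in D$, where, using $\hat\rho_{-i\beta/2}(b_nu^n)=e^{-n\beta/2}b_nu^n$, that $P(u^n)=0$ for $n\neq0$, and that $\tau$ is a trace, it reduces to the single scaling relation $\tau\circ\rho=e^{-\beta}\tau$. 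Conversely, for a $\beta$-$\KMS$ weight $\hat\omega$ on $D\rtimes_\rho\mathbb{Z}$, its restriction $\tau:=\hat\omega|_D$ to the fixed-point algebra $D$ is a trace (apply the $\KMS$ identity to elements fixed by $\hat\rho$, on which $\hat\rho_{-i\beta/2}$ is the identity), the $\KMS$ identity applied to $bu$ forces $\tau\circ\rho=e^{-\beta}\tau$, and $\hat\rho$-invariance together with $P=\frac{1}{2\pi}\int_0^{2\pi}\hat\rho_t\,dt$ forces $\hat\omega(x)=\hat\omega(P(x))$ for positive $x$ in the domain, so $\hat\omega=\tau\circ P$. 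Hence $\tau\mapsto\tau\circ P$ is an affine bijection from the densely defined lower semicontinuous traces $\tau$ on $D$ with $\tau\circ\rho=e^{-\beta}\tau$ onto the $\beta$-$\KMS$ weights on $D\rtimes_\rho\mathbb{Z}$, with inverse the restriction map.

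It remains to transport this along the full corner. As $q$ lies in $D$, it is fixed by $\hat\rho$, so the flow restricts to $q(D\rtimes_\rho\mathbb{Z})q$; and as $q$ is full, restriction of weights to this hereditary subalgebra, together with the unique extension of a $\beta$-$\KMS$ state on the corner back to a $\beta$-$\KMS$ weight on $D\rtimes_\rho\mathbb{Z}$, gives a bijection between the $\beta$-$\KMS$ weights $\hat\omega$ on $D\rtimes_\rho\mathbb{Z}$ with $\hat\omega(q)=1$ and the $\beta$-$\KMS$ states of $q(D\rtimes_\rho\mathbb{Z})q$. Since $P(q)=q$, the normalisation $\hat\tau(q)=\tau(q)$ matches the condition $\tau(q)=1$, so composing with the bijection of the previous paragraph yields the claimed affine bijection $\tau\mapsto\tau\circ P|_{q(D\rtimes_\rho\mathbb{Z})q}$ and identifies its inverse as $\omega\mapsto\hat\omega|_D$, with $\hat\omega$ the $\beta$-$\KMS$ weight on $D\rtimes_\rho\mathbb{Z}$ extending $\omega$. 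For the homeomorphism one equips the traces with the topology of pointwise convergence on $\mathrm{Ped}(D)$ and the $\KMS$ states with the topology of pointwise convergence on $q(D\rtimes_\rho\mathbb{Z})q$, and checks that convergence on one side is equivalent to convergence on the other; this is routine once one notes that $P$ carries the Pedersen ideal of $q(D\rtimes_\rho\mathbb{Z})q$ into $\mathrm{Ped}(D)$.

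The main obstacle is the weight-theoretic bookkeeping underpinning the two middle paragraphs: keeping track of domains and lower semicontinuity throughout, justifying the interchange of $\hat\omega$ with the averaging integral, and in particular establishing the existence and uniqueness of the $\KMS$-weight extension of a $\KMS$ state on the full corner. This is precisely the technical content provided by \cite[Lemma 4.1]{ET}, so in practice I would cite that result rather than reproving it.
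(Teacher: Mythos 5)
Your proposal is correct and follows essentially the same route as the paper: the paper takes the main bijection directly from \cite[Lemma 4.1]{ET} and obtains the description of the inverse by combining the unique extension of a $\beta$-$\KMS$ state on the full corner to a $\beta$-$\KMS$ weight on $D\rtimes_\rho\mathbb{Z}$ (\cite[Remark 3.3]{ExtendKMS}) with the correspondence $\tau\mapsto\tau\circ P$ between scaled densely defined lower semicontinuous traces on $D$ and $\beta$-$\KMS$ weights on the crossed product (\cite[Lemma 3.1]{KT21}, restated as Lemma \ref{lemma: TracesTh}). The only difference is that you sketch proofs of these two ingredients where the paper simply cites them, and you correctly identify the weight-theoretic extension step as the point where a citation is the sensible course.
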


\begin{proof}
We will only comment on the proof of the last statement. Let $\omega$ be a $\beta$-$\KMS$ state for the dual action $\hat{\rho}$ on $q(D \rtimes_\rho \mathbb{Z})q$. Then, by \cite[Remark 3.3]{ExtendKMS}, there exists a unique $\beta$-$\KMS$ weight $\hat{\omega}$ for the dual action on $D\rtimes_\rho\mathbb{Z}$ which extends $\omega$.\ The result now follows by \cite[Lemma 3.1]{KT21}.
\end{proof}

\begin{proof}[Proof of Theorem \ref{thm:ExistenceKirchberg}] The proof follows the strategy in \cite[Lemma 5.8]{EST}. Recall that if $S=\emptyset$, then we take $\theta$ to be the trivial flow on $A$. Therefore, we assume that $S$ is nonempty and we claim that the $\KMS$ bundle $(S^\theta,\pi^\theta)$ of $\theta$ is isomorphic to $(S,\pi)$. This will finish the proof.    

Let $(\omega,\beta)\in S^\theta$. By \cite[Remark 3.3]{ExtendKMS}, there exists a unique $\beta$-$\KMS$ weight $\hat{\omega}$ for the dual action $\hat{\gamma}$ on $B\rtimes_\gamma\mathbb{Z}$ which extends $\omega$. Furthermore, $B\rtimes_\gamma\mathbb{Z}$ is simple by \ref{item: CP2} of Lemma \ref{lemma: ConstrCrossProdKirch}, so $e$ is full in $B\rtimes_\gamma\mathbb{Z}$.\ Then, by Proposition \ref{prop: tracesKMS}, $\hat{\omega}|_B$ is a densely defined lower semicontinuous trace on $B$ such that $\hat{\omega}|_B\circ\gamma=e^{-\beta}\hat{\omega}|_B$ and $\hat{\omega}|_B(e)=1$. Therefore, $(\hat{\omega}|_B)_*$ is a positive homomorphism on $K_0(B)$ such that $$(\hat{\omega}|_B)_*\circ K_0(\gamma)=e^{-\beta}(\hat{\omega}|_B)_* \ \text{and}\ (\hat{\omega}|_B)_*([e]_0)=1.$$ Then, by Lemma \ref{lemma: BStates}, there exists a unique $\omega'\in\pi^{-1}(\beta)$ such that 
\begin{equation}\label{eq: BundleHom}
(\hat{\omega}|_B)_*(h,g)=g(\omega')
\end{equation}
for all $(h,g)\in\Gamma\cong K_0(B)$. Define $\varphi:S^{\theta}\to S$ by $\varphi(\omega,\beta)=\omega'$ and note that $\pi\circ\varphi=\pi^\theta$ as $\omega'\in \pi^{-1}(\beta)$. Moreover, the restriction $\varphi:(\pi^\theta)^{-1}(\beta)\to \pi^{-1}(\beta)$ is affine by construction for any $\beta\in\mathbb{R}$.


We claim that $\varphi$ is surjective. Let $\mu\in S$ and define $\ev_{\mu}:K_0(B)\to\mathbb{R}$ by $\ev_{\mu}(h,g)=g(\mu)$. Recall that $K_0(\gamma)=\kappa_0\oplus\alpha$ by \ref{item: CP1} of Lemma \ref{lemma: ConstrCrossProdKirch} and $\alpha(g)(x)=e^{-\pi(x)}g(x)$ for any $g\in G$. Then, $\ev_{\mu}$ is a positive group homomorphism such that $\ev_{\mu}\circ K_0(\gamma)=e^{-\pi(\mu)}\ev_{\mu}$ and $\ev_{\mu}(v)=1$ as $v=(w,1)$. Then, by Proposition \ref{thm: TracesAT}, there exists a unique densely defined lower semicontinuous trace $\tau_\mu$ on $B$ such that $(\tau_\mu)_*=\ev_\mu$. By Proposition \ref{prop: tracesKMS}, there exists a $\pi(\mu)$-$\KMS$ state $\omega$ for $\theta$ such that $(\hat{\omega}|_B)_*=\ev_{\mu}$.\ Then $\varphi(\omega,\pi(\mu))=\mu$, so $\varphi$ is surjective.

We further check that $\varphi$ is injective. Let $(\omega_1,\beta_1),(\omega_2,\beta_2)\in S^\theta$ such that $\varphi(\omega_1,\beta_1)=\varphi(\omega_2,\beta_2)$. Then, $$\beta_1=\pi(\varphi(\omega_1,\beta_1))=\pi(\varphi(\omega_2,\beta_2))=\beta_2.$$ Moreover, by the definition of the map $\varphi$, $$(\hat{\omega}_1|_B)_*=(\hat{\omega}_2|_B)_*.$$ By Proposition \ref{prop: tracesKMS}, $\hat{\omega}_1|_B$ and $\hat{\omega}_2|_B$ are densely defined lower semicontinuous traces on $B$ such that $$(\hat{\omega}_1|_B)(e)=(\hat{\omega}_2|_B)(e)=1.$$ By Proposition \ref{thm: TracesAT}, it follows that $\hat{\omega}_1|_B=\hat{\omega}_2|_B.$ Then, Proposition \ref{prop: tracesKMS} yields that $\omega_1=\omega_2$, which shows that $\varphi$ is injective. 

If we show that $\varphi^{-1}:S\to S^\theta$ is continuous, then $\varphi$ is a homeomorphism by Lemma \ref{lemma: BundleHomeom}. Recall from Remark \ref{rmk: MetrisBundle} that both $S^\theta$ and $S$ are metrisable and let $\omega_n'$ be a sequence in $S$ which converges to $\omega'$. If $\varphi^{-1}(\omega_n')=(\omega_n,\beta_n)$ and $\varphi^{-1}(\omega')=(\omega,\beta)$, then \eqref{eq: BundleHom} yields that $(\hat{\omega}_n|_B)_*(h,g)$ converges to $(\hat{\omega}|_B)_*(h,g)$ for any $(h,g)\in \Gamma\cong K_0(B)$. Then, $\hat{\omega}_n|_B$ converges to $\hat{\omega}|_B$ by Proposition \ref{thm: TracesAT}, so $\omega_n$ converges to $\omega$ by Proposition \ref{prop: tracesKMS}. Moreover, $\beta_n=\pi(\omega_n')$, which converges to $\pi(\omega')=\beta$ by continuity of $\pi$. Thus, $(\omega_n,\beta_n)$ converges to $(\omega,\beta)$, so $\varphi^{-1}$ is continuous. Together with Lemma \ref{lemma: CrossProdCorner}, this yields the conclusion of Theorem \ref{thm:ExistenceKirchberg}.
\end{proof}


\begin{rmk}\label{rmk: DifferenceEST}
Note that in \cite[Section 5]{EST}, to realise the $K$-theory of a stable AF-algebra, the authors apply Proposition \ref{prop: SESRordam} to the pair $(K_0(A),K_1(A))$.\ Applying Proposition \ref{prop: SESRordam} to the pairs $(K_0(A),0)$ and $(K_1(A),0)$ instead of the pair $(K_0(A), K_1(A))$ is precisely what allows us to avoid assuming that $K_1(A)$ is torsion free.\ This idea has its roots in \cite[Theorem 3.6]{SESRordam}, where Rørdam showed that any pair of countable discrete abelian groups can be realised as the $K$-theory of an endomorphism crossed product of an $A\mathbb{T}$-algebra.\ Then, since we are working with an A$\mathbb{T}$-algebra, rather than an AF-algebra, different arguments are needed in Lemmas \ref{lemma: ConstrBKirch} and \ref{lemma: ConstrCrossProdKirch} to show that $B$ is $\Z$-stable and the crossed product is simple and $\Z$-stable.\ Moreover, the invariant for classifying possibly nonsimple A$\mathbb{T}$-algebras of real rank zero is more intricate than in the case of AF-algebras (see Section \ref{sect: AT}), so more detailed $K$-theory groups have to be constructed.
\end{rmk}

\section{Bundles on tracial classifiable C$^*$-algebras}\label{sect: ProofB}

In this section, we will prove Theorem \ref{thm:ExistenceTracial}. The construction of the required flow is in the spirit of \cite[Section 3]{EST}. Adapting the construction in \cite[Section 3]{EST} and using classification of unital $^*$-homomorphisms between classifiable $\C$-algebras from \cite{classif}, we will extend \cite[Theorem 3.14]{EST} to any simple, separable, unital, nuclear, stably finite, $\Z$-stable $\C$-algebra, with real rank zero, and which satisfies the $\UCT$.

\subsection{Realising tracial C$^*$-algebras as corners of crossed products by $\mathbb{Z}$}\label{subsect: Tracial}

We will follow a similar strategy as in Section \ref{sect: ProofA}, where we proved the corresponding result for Kirchberg algebras.\ Throughout this section, $A$ will be a simple, separable, unital, nuclear, stably finite, $\Z$-stable $\C$-algebra, with real rank zero, and which satisfies the $\UCT$. We also let $(S,\pi)$ be a compact simplex bundle such that $\pi^{-1}(0)\cong T(A)$. We will freely identify these simplices.\ The main step in proving Theorem \ref{thm:ExistenceTracial} is realising $A$ as a unital corner of a crossed product $(B\otimes\mathcal{K})\rtimes_\gamma\mathbb{Z}$, with $B$ being a simple, separable, unital, nuclear, stably finite, $\Z$-stable $\C$-algebra satisfying the $\UCT$.\ We first need to build the $K$-theory of the desired $\C$-algebra $B$.\ Let $\mathcal{A}_0(S,\pi)$ denote the set of elements $f\in \mathcal{A}(S,\pi)$ for which $f(T(A))=0$, and let $$\mathcal{A}_0(S,\pi)_+\coloneqq\{f\in\mathcal{A}_0(S,\pi)\colon f(x)>0 \ \text{for any}\ x\in \pi^{-1}(t) \ \forall \ t\neq 0\}.$$

\begin{lemma}\label{lemma: ConstrG0Tracial}
Let $(S,\pi)$ be a compact simplex bundle such that $\pi^{-1}(0)\cong T(A)$. Then there exists a countable subgroup $G_0$ of $\mathcal{A}_0(S,\pi)$ such that the following conditions hold:
\begin{enumerate}[label=\textit{(\roman*)}]
\item for any $f\in \mathcal{A}_0(S,\pi)$ and any $\epsilon>0$, there is an element $g\in G_0$ such that $$\sup\limits_{x\in S}|f(x)-g(x)|<\epsilon;$$ \label{item: TracialDens}

\item the map $\eta: g\mapsto e^{-\pi}g$ is an automorphism on $G_0$ and $\id-\eta$ is also an automorphism on $G_0$;\label{item: TracialAutCond}

\item For any $g\in G_0$, there exist $g_1,g_2\in G_0\cap \mathcal{A}_0(S,\pi)_+$ such that $g=g_1-g_2$.\label{item: OrdGrp}
\end{enumerate}
\end{lemma}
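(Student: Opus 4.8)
The plan is to realise $G_0$ as a countable subgroup of a canonical large subgroup of $\mathcal{A}_0(S,\pi)$ that already enjoys the required closure properties, and then to secure countability and condition \textit{(iii)} by a routine closing-up argument. Write $u:=1-e^{-\pi}\in C(S)$; it is constant on each fibre, vanishes exactly on $\pi^{-1}(0)$, and behaves like $\pi$ near $\pi^{-1}(0)$. Multiplication by $u$ and by $e^{\pm\pi}$ preserves $\mathcal{A}_0(S,\pi)$ and is injective on it (every element of $\mathcal{A}_0(S,\pi)$ vanishes on $\pi^{-1}(0)$), so requiring $\id-\eta$ to be \emph{surjective} onto $G_0$ forces every element of $G_0$ to be infinitely divisible by $u$. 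I would therefore work inside
\[
\mathcal{I}:=\bigl\{f\in\mathcal{A}_0(S,\pi):\ \text{for every }m\in\mathbb{N}\ \text{there is }f_m\in\mathcal{A}_0(S,\pi)\ \text{with }u^m f_m=f\bigr\}.
\]
One checks at once that $\mathcal{I}$ is a subgroup of $\mathcal{A}_0(S,\pi)$ and that $\eta$, $\id-\eta$ and their (partially defined) inverses all map $\mathcal{I}$ into $\mathcal{I}$. Using compactness of $S$, every $f\in\mathcal{A}_0(S,\pi)$ is a uniform limit of functions $(\psi\circ\pi)f$ with $\psi\in C(\mathbb{R})$ vanishing near $0$; these vanish on a neighbourhood of $\pi^{-1}(0)$ and hence lie in $\mathcal{I}$, so $\mathcal{I}$ is dense in $\mathcal{A}_0(S,\pi)$ and contains a countable dense subset of it.

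The key technical input is a domination statement inside $\mathcal{I}$: for every $f\in\mathcal{I}$ there exists $h\in\mathcal{I}\cap\mathcal{A}_0(S,\pi)_+$ with $h(x)>f(x)$ for all $x$ with $\pi(x)\neq0$. Indeed, $f\in\mathcal{I}$ forces $\lvert f(x)\rvert\le C_m\lvert\pi(x)\rvert^m$ for $x$ near $\pi^{-1}(0)$ and every $m$ (since the element $f_m$ with $u^m f_m=f$ is bounded, $S$ being compact), so the modulus $\mu(s):=\sup\{\lvert f(x)\rvert:\lvert\pi(x)\rvert\le s\}$ satisfies $\mu(s)=o(s^m)$ for every $m$. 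Pick a continuous non-decreasing $\nu\colon[0,\infty)\to[0,\infty)$ with $\nu(0)=0$, $\nu>0$ on $(0,\infty)$, $\nu\ge\mu$ and still $\nu(s)=o(s^m)$ for every $m$ (e.g. add a small copy of $e^{-1/s^2}$ to a piecewise-linear majorant of $\mu$); then $h:=\nu\circ\lvert\pi(\cdot)\rvert$ is constant on fibres, lies in $\mathcal{A}_0(S,\pi)_+$, dominates $f$ off $\pi^{-1}(0)$, and lies in $\mathcal{I}$ since $\nu(s)/s^m\to0$.

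Now build $G_0=\bigcup_{k\ge0}G^{(k)}\subseteq\mathcal{I}$ as follows. Let $G^{(0)}$ be the subgroup of $\mathcal{I}$ generated by $\{e^{n\pi}u^{\ell}f:n,\ell\in\mathbb{Z}\}$ as $f$ ranges over a fixed countable dense subset of $\mathcal{A}_0(S,\pi)$ contained in $\mathcal{I}$; having built the countable subgroup $G^{(k)}\subseteq\mathcal{I}$, apply the domination statement to choose, for each $g\in G^{(k)}$, an $h_g\in\mathcal{I}\cap\mathcal{A}_0(S,\pi)_+$ with $h_g>g$ off $\pi^{-1}(0)$, and let $G^{(k+1)}$ be the subgroup of $\mathcal{I}$ generated by $G^{(k)}$ together with $\{e^{n\pi}u^{\ell}h_g:n,\ell\in\mathbb{Z},\ g\in G^{(k)}\}$. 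Each $G^{(k)}$ is countable, and since multiplication by $e^{\pm\pi}$ and by $u^{\pm1}$ sends generators to generators, each $G^{(k)}$, hence $G_0$, is invariant under $\eta^{\pm1}$ and $(\id-\eta)^{\pm1}$. Then \textit{(i)} holds because $G_0\supseteq G^{(0)}$ is dense; \textit{(ii)} holds because $\eta$ and $\id-\eta$ are injective group homomorphisms on $\mathcal{A}_0(S,\pi)$ and $G_0$ is invariant under each of them and under its inverse; and \textit{(iii)} holds because, given $g\in G_0$, choosing $k$ with $g\in G^{(k)}$ yields $h_g,\ h_g-g\in G^{(k+1)}\subseteq G_0$, both lying in $\mathcal{A}_0(S,\pi)_+$ (the second since $h_g>g$ off $\pi^{-1}(0)$ while both vanish on $\pi^{-1}(0)$), with $g=h_g-(h_g-g)$.

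The main obstacle is the apparent conflict between \textit{(ii)} and \textit{(iii)}: condition \textit{(ii)} pushes $G_0$ into the infinitely-$u$-divisible subgroup $\mathcal{I}$, yet no strictly positive element of $\mathcal{A}_0(S,\pi)$ can vanish on a neighbourhood of $\pi^{-1}(0)$, so it is not clear that $G_0\cap\mathcal{A}_0(S,\pi)_+$ is large enough. The resolution is that $\mathcal{I}$ still contains cofinally large strictly positive functions --- those vanishing to infinite order at $\pi^{-1}(0)$ but nowhere else --- and the domination statement is exactly what makes the closing-up go through. Compactness of $S$ enters in the density of functions vanishing near $\pi^{-1}(0)$ and in the decay estimate for $\mu$; everything else is routine.
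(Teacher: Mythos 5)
Your proposal is correct and follows essentially the same route as the paper: a countable closing-up argument in which $G_0$ is saturated under multiplication by $e^{n\pi}(1-e^{-\pi})^m$ for all $m,n\in\mathbb{Z}$, with compactness of $S$ supplying both the density of functions vanishing near $\pi^{-1}(0)$ and the decomposition needed for \textit{(iii)} (the paper defers the surjectivity of $\eta$ and $\id-\eta$ to the analogous construction in \cite[Section 4.2]{ET}). Your explicit isolation of the infinitely-$u$-divisible subgroup $\mathcal{I}$ and the domination lemma producing strictly positive elements of $\mathcal{I}$ cleanly resolve the tension between \textit{(ii)} and \textit{(iii)} that the paper's one-line treatment of \textit{(iii)} leaves implicit.
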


\begin{proof}
Since $S$ is compact and the topology on $S$ is second countable, we can choose a countable subgroup $G_0$ of $\mathcal{A}_0(S,\pi)$ such that for all $\epsilon>0$ and all $f\in \mathcal{A}_0(S,\pi)$, there is an element $g\in G_0$ such that $$\sup\limits_{x\in S}|f(x)-g(x)|<\epsilon.$$ Moreover, we can ensure that if $f\in G_0$, then $$x\mapsto e^{n\pi(x)}(1-e^{-\pi(x)})^mf(x)$$ is in $G_0$ for all $m,n\in\mathbb{Z}$.\ Then, a similar construction to the one in \cite[Section 4.2 pp 109-110]{ET} allows to choose $G_0$ such that both $\eta$ and $\id-\eta$ are surjective, hence giving \ref{item: TracialAutCond}. To check \ref{item: OrdGrp}, let $g\in G_0$ and $g_1,g_2\in\mathcal{A}_0(S,\pi)_+$ such that $g=g_1-g_2$. Using compactness of $S$, we can then ensure that $g_1,g_2\in G_0$.
\end{proof}

\begin{lemma}\label{lemma: ConstrKthTracial}
There exist a countable ordered abelian group $(G,G_+,v)$ with distinguished order unit $v$, a homomorphism $\hat{L}:G\to \mathcal{A}(S,\pi)$, and an automorphism $\alpha$ of $(G,G_+)$ such that the following conditions hold:
\begin{enumerate}[label=\textit{(\roman*)}]
\item $(G,G_+,v)$ is simple and weakly unperforated;\label{item: Kth1}

\item $\hat{L}(G)$ is uniformly dense in $\mathcal{A}(S,\pi)$;\label{item: Kth3}

\item the space of states on $G$, denoted by $S(G)$, is a metrisable Choquet simplex;\label{item: Kth2}

\item the homomorphism $\id-\alpha:G\to G$ is injective;\label{item: Kth4}

\item there is an isomorphism $\Sigma: G/(\id-\alpha)(G)\to K_0(A)$;\label{item: Kth6}

\item if $\beta\in\mathbb{R}$ and $\varphi$ is a state on $(G,G_+,v)$ with the property that $$\varphi\circ\alpha=e^{-\beta}\varphi,$$ then there is a unique $s\in \pi^{-1}(\beta)$ such that $\varphi=\ev_s\circ \hat{L}$.\footnote{The map $\ev_s$ denotes evaluation at $s\in \pi^{-1}(\beta)$.} In particular, the space of $\alpha$-invariant states on $G$ is affinely homeomorphic to $T(A)$.\label{item: Kth5}

\end{enumerate}
\end{lemma}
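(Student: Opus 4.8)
The plan is to build the group $G$ by combining the group $G_0 \subseteq \mathcal{A}_0(S,\pi)$ from Lemma \ref{lemma: ConstrG0Tracial} with a group $H$ that recovers $K_0(A)$ via the $(\id - \kappa)$-construction of Rørdam (Proposition \ref{prop: SESRordam} applied to $(K_0(A),0)$, giving $\kappa$ on $H$), and then to put an order on the result that is \emph{determined by the pairing with the simplex $S$}, exactly as in the Kirchberg case but now with $\pi^{-1}(0) = T(A)$ nonempty rather than empty. Concretely I would take an abstract group of the form $G = H \oplus G_0$ (possibly after tensoring up by $\mathbb{Q}$ on the part corresponding to $G_0$ so that the trace pairing behaves well, mirroring the passage $G = \mathbb{Q}G_0$ in Lemma \ref{lemma: PropGKirchberg}), with $\alpha = \kappa \oplus \eta$ where $\eta(g) = e^{-\pi}g$, and define $\hat{L} : G \to \mathcal{A}(S,\pi)$ on $G_0$ via the inclusion $G_0 \hookrightarrow \mathcal{A}_0(S,\pi) \subseteq \mathcal{A}(S,\pi)$ and on $H$ by first pushing through $q_0 : H \to K_0(A)$ and then through $\rho_A : K_0(A) \to \mathrm{Aff}(T(A))$, suitably identified with the restrictions of functions in $\mathcal{A}(S,\pi)$ to $\pi^{-1}(0) = T(A)$. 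One then sets $v = (w, \cdot)$ with $q_0(w) = [1_A]_0$ and a chosen base point, and defines $G_+$ to be the set of elements whose image under $\hat{L}$ is strictly positive on each fibre $\pi^{-1}(t)$ for $t \neq 0$ and positive (in the induced $K_0(A)$-order, equivalently strictly positive off the extreme boundary via real rank zero / the order being determined by traces, Remark \ref{rmk: ElliottvsKTu}) on $\pi^{-1}(0)$, together with $0$.

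With these definitions in hand, the verifications proceed as follows. Condition \ref{item: Kth3}, that $\hat{L}(G)$ is uniformly dense in $\mathcal{A}(S,\pi)$, follows from \ref{item: TracialDens} of Lemma \ref{lemma: ConstrG0Tracial} (density of $G_0$ in $\mathcal{A}_0(S,\pi)$) together with density of $\rho_A(K_0(A))$ in $\mathrm{Aff}(T(A))$ — here is where real rank zero of $A$ is essential, since it guarantees $\rho_A(K_0(A))$ is uniformly dense in $\mathrm{Aff}(T(A))$ — and a standard argument splitting an arbitrary $f \in \mathcal{A}(S,\pi)$ into its restriction to $\pi^{-1}(0)$ (approximated by the $H$-part) and a correction vanishing on $T(A)$ (approximated by $G_0$), using compactness of $S$ to control the correction. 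Condition \ref{item: Kth4}, injectivity of $\id - \alpha = (\id - \kappa) \oplus (\id - \eta)$, is immediate: $\id - \kappa$ is injective by Proposition \ref{prop: SESRordam} and $\id - \eta$ is an automorphism of $G_0$ by \ref{item: TracialAutCond}. Condition \ref{item: Kth6} then follows because $G/(\id-\alpha)(G) \cong H/(\id-\kappa)(H) \oplus G_0/(\id-\eta)(G_0) \cong K_0(A) \oplus 0$, using Proposition \ref{prop: SESRordam} and surjectivity of $\id - \eta$. Simplicity and weak unperforation in \ref{item: Kth1} I would deduce from the order being determined by a pairing against a compact simplex bundle: weak unperforation is automatic for such pairing-defined orders, and simplicity uses that $\pi(S)$ is connected (contains $0$) together with the fact that any nonzero positive element is an order unit because it is strictly positive on the compact fibres — this is precisely the place the compactness hypothesis on $S$ enters, as flagged in the introduction.

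For \ref{item: Kth2}, that the state space $S(G)$ is a metrisable Choquet simplex, the plan is to identify $S(G)$ with an appropriate inverse-limit or "sub-simplex-bundle" description: states on $G$ sending $v$ to $1$ correspond, via $\hat{L}$ and condition \ref{item: Kth5}, to points of $S$ together with the extra datum of how they evaluate on the (non-closed) group $G$; since $\hat{L}(G)$ is dense in $\mathcal{A}(S,\pi)$ and $S$ is compact, a state of $G$ is determined by continuity on $S$, so $S(G)$ is affinely homeomorphic to a closed subset of the compact simplex of probability-like functionals, and one checks it is all of an appropriate bundle union $\bigcup_\beta \pi^{-1}(\beta)$ glued along $\mathbb{R}$ — metrisability follows from second countability of $S$ (Remark \ref{rmk: MetrisBundle}) and the Choquet simplex property from each fibre being a Choquet simplex plus the affine structure; here real rank zero of $A$ is again used to ensure the $\pi^{-1}(0) = T(A)$ fibre glues in as a genuine closed face rather than something pathological, as emphasised in the introduction. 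Finally, \ref{item: Kth5} is the analogue of Lemma \ref{lemma: PropGKirchberg}\ref{item: ConstrG6} / Lemma \ref{lemma: BStates}: given a state $\varphi$ with $\varphi \circ \alpha = e^{-\beta}\varphi$, the eigenvalue equation forces $\varphi$ to annihilate the $H$-part (arguing as in Lemma \ref{lemma: BStates} that $n\varphi(h,0) + 1 \geq 0$ for all $n$ forces $\varphi(h,0)=0$, using that $v$ lies over the base point), so $\varphi$ descends to an eigenfunctional on $G_0$, which by density extends uniquely to an $e^{-\beta}$-eigenfunctional on $\mathcal{A}(S,\pi)$, and such eigenfunctionals are exactly evaluations at points of $\pi^{-1}(\beta)$ — uniqueness of $s$ coming from $\hat{L}(G)$ being point-separating on each fibre (a consequence of \ref{item: Kth3} and the proper simplex bundle axioms), and the $\beta = 0$ case giving the affine homeomorphism between $\alpha$-invariant states and $T(A) = \pi^{-1}(0)$.

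\textbf{Main obstacle.} I expect the genuinely delicate point to be \ref{item: Kth2}, proving that $S(G)$ is a metrisable Choquet simplex: one must reconcile the abstract state space of a countable ordered group with the concrete bundle $\bigcup_{\beta \in \mathbb{R}} \pi^{-1}(\beta)$, and in particular verify that no extra states appear beyond those predicted by \ref{item: Kth5} and that the simplex structure on the $\pi^{-1}(0) = T(A)$ fibre is compatible with the rest — this is exactly where the introduction warns that real rank zero of $A$ is "heavily used", so I would expect to need a careful argument, likely leaning on the characterisation of $\mathrm{Aff}(T(A))$ via $\rho_A(K_0(A))$ being dense (real rank zero) to realise $T(A)$ itself as a state space of an ordered group, and then a gluing/pushout argument over the base $\mathbb{R}$.
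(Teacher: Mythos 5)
There is a genuine gap, and it lies in your choice of the $K_0$-carrying summand of $G$. You take $G=H\oplus G_0$ with $H$ coming from Rørdam's extension $0\to H\xrightarrow{\id-\kappa}H\xrightarrow{q_0}K_0(A)\to 0$, set $\alpha=\kappa\oplus\eta$, and define $\hat{L}$ on $H$ by $h\mapsto L(\rho_A(q_0(h)))$. For the lemma to do its job (and for the literal statement of \ref{item: Kth5} together with the surjectivity step in the proof of Theorem \ref{thm:ExistenceTracial}), one needs the equivariance $\hat{L}\circ\alpha=e^{-\pi}\cdot\hat{L}$ as functions on $S$, so that $\ev_s\circ\hat{L}$ is an $e^{-\pi(s)}$-eigenstate for every $s\in S$. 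Your $\hat{L}$ fails this: since $q_0\circ\kappa=q_0$, equivariance on the $H$-summand would force $(1-e^{-\pi(x)})L(\rho_A(q_0(h)))(x)=0$ for all $x\in S$, i.e.\ $L(\rho_A(q_0(h)))$ vanishes off $\pi^{-1}(0)$; taking $h=w$ with $q_0(w)=[1_A]_0$ gives $L(\rho_A(q_0(w)))=L(1)=1$, a contradiction whenever $S\neq\pi^{-1}(0)$. Consequently no point of $\pi^{-1}(\beta)$ with $\beta\neq 0$ yields an eigenstate, and the construction cannot recover the bundle. The paper circumvents exactly this by taking $G=\bigl(\bigoplus_{\mathbb{Z}}K_0(A)\bigr)\oplus G_0$ with $\alpha$ the left shift on the first summand (times $e^{-\pi}$ on $G_0$) and $\hat{L}(\xi,g)(x)=g(x)+\sum_n L(\rho_A(\xi_n))(x)e^{n\pi(x)}$: the weight $e^{n\pi}$ on the $n$-th copy makes the shift correspond to multiplication by $e^{-\pi}$ identically, and the isomorphism $G/(\id-\alpha)(G)\cong K_0(A)$ then comes from the summation map $\Sigma_0(\xi,g)=\sum_n\xi_n$ via a telescoping argument — Rørdam's proposition is reserved for $K_1$ only.

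Two further points. First, your argument for \ref{item: Kth5} is internally inconsistent: you invoke the Lemma \ref{lemma: BStates} trick ($n\varphi(h,0)+1\geq 0$ for all $n$) to conclude that an eigenstate kills the $H$-part, but that trick works in the Kirchberg case only because the order on $\Gamma$ there ignores $H_0$ entirely — which is possible only because $\pi^{-1}(0)=\emptyset$. Here your own order sees $H$ through $\hat{L}$, and it must, since for $s\in T(A)$ the state $\ev_s\circ\hat{L}$ does \emph{not} annihilate the $K_0(A)$-part (it implements $\rho_A$, which is the whole point of Lemma \ref{lemma: IsomCorner}). Second, your plan for \ref{item: Kth2} misidentifies $S(G)$: the state space consists of \emph{all} normalised positive homomorphisms, not just eigenfunctionals, so it is not a union of the fibres $\pi^{-1}(\beta)$ glued over $\mathbb{R}$. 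The paper instead verifies the strong Riesz interpolation property for $(G,G_+)$ directly — using density of $\rho_A(K_0(A))$ in $\Aff(T(A))$ (this is where real rank zero enters), the extension lemma of Bratteli–Elliott–Kishimoto, and compactness of $S$ — and then applies Kadison duality to conclude $S(G)$ is a metrisable Choquet simplex.
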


\begin{proof}
Set $$G=\Bigg(\bigoplus\limits_{\mathbb{Z}}K_0(A)\Bigg)\oplus G_0.$$ We will define an order on $G$ adapting the construction in \cite[Section 4.2]{ET}.\ The map $$r:\mathcal{A}(S,\pi)\to \Aff(T(A))$$ given by restriction is surjective ({\cite[Lemma 4.4(1)]{ET}}), so we can choose a positive linear map 
\begin{equation}\label{eq: Lmap}
L:\Aff(T(A))\to \mathcal{A}(S,\pi) \  \text{such that} \ r\circ L=\id \ \text{and} \ L(1)=1.
\end{equation}
If $\rho_A:K_0(A)\to \Aff(T(A))$ is the canonical pairing map of $A$, let us consider the homomorphism $\hat{L}:G\to \mathcal{A}(S,\pi)$ given by 
\begin{equation}\label{eq: LHatMap}
\hat{L}(\xi,g)(x)=g(x)+\sum\limits_{n\in\mathbb{Z}} L(\rho_A(\xi_n))(x) e^{n\pi(x)}
\end{equation}
for all $\xi=(\xi_n)_{n\in\mathbb{Z}}$, $g\in G_0$, and $x\in S$.\ Note that the map $\hat{L}$ is well-defined since $\xi$ has only finitely many nonzero entries.

Define 
\begin{equation}\label{eq: TracialOrder}
G_+=\{(\xi,g)\in G: \hat{L}(\xi,g)(x)>0,\ x\in S\}\cup \{0\}
\end{equation}
and set $v=(1^{(0)},0),$ where $1^{(0)}$ is the sequence with $[1_A]_0$ in the zero entry and zero elsewhere.

To show \ref{item: Kth1}, we will first show that the triple $(G,G_+,v)$ defines an ordered abelian group.\ The construction of $G_+$ in \eqref{eq: TracialOrder} yields that $G_+\cap(-G_+)=\{0\}$.\ We will check that $G=G_+-G_+$.\ Let $(\xi,g)\in G$. As $A$ is stably finite, $(K_0(A),K_0(A)_+)$ is an ordered abelian group (\cite[Proposition 6.3.3]{blackadar}), so there exist sequences $\xi^{(1)},\xi^{(2)}$ such that $\xi^{(i)}_n\in K_0(A)_+$ for any $i=1,2$ and $n\in\mathbb{N}$ and $\xi=\xi^{(1)}-\xi^{(2)}$.\ Moreover, by \ref{item: OrdGrp} of Lemma \ref{lemma: ConstrG0Tracial}, there exist $g_1,g_2\in G_0\cap\mathcal{A}_0(S,\pi)_+$ such that $g=g_1-g_2$. Thus, $(\xi,g)=(\xi^{(1)},g_1)-(\xi^{(2)},g_2)$, which shows that $G=G_+-G_+$.  

To show that $(G,G_+,v)$ is simple, we use the assumption that $S$ is compact.\ Let $(\xi,g)\in G_+\setminus\{0\}$.\ By definition, this yields that $\hat{L}(\xi,g)(x)>0$ for any $x\in S$.\ Since $S$ is compact, there exists $n\in\mathbb{N}$ such that $\hat{L}(\xi,g)(x)\geq \frac{1}{n}$ for any $x\in S$.\ This implies that $(\xi,g)$ is an order unit, so $(G,G_+,v)$ is a simple ordered abelian group.

To see that $(G,G_+)$ is weakly unperforated, let $n\in\mathbb{N}$ and $(\xi,g)\in G$ such that $n(\xi,g)\in G_+\setminus\{0\}$.\ Therefore, by linearity of $L$, it follows that $\hat{L}(\xi,g)(x)>0,\ x\in S$.\ Hence, $(\xi,g)\in G_+\setminus\{0\}$, so $(G,G_+)$ is weakly unperforated. 

We claim that \ref{item: Kth3} follows by combining \ref{item: TracialDens} of Lemma \ref{lemma: ConstrG0Tracial} and the fact that $A$ has real rank zero. Let $f\in\mathcal{A}(S,\pi)$ and $\epsilon>0$. Note $f|_{T(A)}\in \Aff(T(A))$. Since $A$ is unital, simple, exact, finite, $\Z$-stable, and has real rank zero, it follows that $\rho_A(K_0(A))$ is uniformly dense in $\Aff(T(A))$ (\cite[Theorem 7.2]{Rordam04}). Therefore, there exists $y\in K_0(A)$ such that 
\begin{equation*}\label{eq1}
    f(\tau)-\epsilon/2<\rho_A(y)(\tau)<f(\tau)+\epsilon/2
\end{equation*} for any $\tau\in T(A)$. As $T(A)$ is closed in $S$ and $S$ is compact, \cite[Lemma 2.3]{BEK86} shows that there exists $h\in\mathcal{A}(S,\pi)$ extending $\rho_A(y)$ such that 
\begin{equation}\label{eq2}
  f(x)-\epsilon/2< h(x) < f(x)+\epsilon/2  
\end{equation} for any $x\in S$. Then, $h-L(\rho_A(y))\in \mathcal{A}_0(S,\pi)$, so by \ref{item: TracialDens} of Lemma \ref{lemma: ConstrG0Tracial}, there exists $g\in G_0$ such that 
\begin{equation}\label{eq3}
  |h(x)-L(\rho_A(y))(x)-g(x)|<\epsilon/2
\end{equation} for any $x\in S$. Now consider the element $(y^{(0)},g)\in G$, where $y^{(0)}$ is the sequence which is constant $0$ apart from the zero entry which is equal to $y$. Then, 
\begin{equation*}
\begin{array}{rcl}
    \sup\limits_{x\in S}|\hat{L}(y^{(0)},g)(x)-f(x)|&=&\sup\limits_{x\in S}|g(x)+L(\rho_A(y))(x)-f(x)|\\ &\stackrel{\eqref{eq3}}{\leq}&\sup\limits_{x\in S}|f(x)-h(x)|+\epsilon/2\\ &\stackrel{\eqref{eq2}}{\leq}& \epsilon. 
\end{array}
\end{equation*} Thus, $\hat{L}(G)$ is uniformly dense in $\mathcal{A}(S,\pi)$.

We will now show \ref{item: Kth2}. This follows the strategy in \ref{item: Kth3}.\ Using \cite[Corollary II.3.11]{AlfsenBook} and Kadison's duality (\cite{Kad51}), it suffices to check that $G$ satisfies the strong Riesz interpolation property.\ Suppose that $(\xi_i,g_i)< (\eta_j,h_j)$ in $G$ for any $i,j=1,2$.\ Therefore, $\hat{L}(\xi_i,g_i)(x)<\hat{L}(\eta_j,h_j)(x)$ for any $x\in S$.\ Moreover, $\hat{L}(\xi_i,g_i)$ and $\hat{L}(\eta_j,h_j)$ restrict to affine functions on $T(A)$.

Since $A$ is unital, simple, exact, finite, $\Z$-stable, and has real rank zero, it follows that $\rho_A(K_0(A))$ is uniformly dense in $\Aff(T(A))$ (\cite[Theorem 7.2]{Rordam04}).\ Thus, $\rho_A(K_0(A))$ satisfies the strong Riesz interpolation property with respect to the strict ordering in $\Aff(T(A))$ (\cite[Lemma 3.1]{DimGroup80}), so there exists $y\in K_0(A)$ such that 
\begin{equation}
\hat{L}(\xi_i,g_i)(\tau)<\rho_A(y)(\tau)<\hat{L}(\eta_j,h_j)(\tau)    
\end{equation} for any $\tau\in T(A)$ and any $i,j=1,2$.\ As $T(A)$ is closed in $S$ and $S$ is compact, \cite[Lemma 2.3]{BEK86} shows that there exists $f\in\mathcal{A}(S,\pi)$ extending $\rho_A(y)$ such that \begin{equation}\label{eq:Tracial1}
\hat{L}(\xi_i,g_i)(x)<f(x)<\hat{L}(\eta_j,h_j)(x)    
\end{equation} for any $x\in S$ and any $i,j=1,2$.

Since $S$ is compact, we can let $\delta>0$ be smaller than $f(x)-\hat{L}(\xi_i,g_i)(x)$ and $\hat{L}(\eta_j,h_j)(x)-f(x)$ for any $x\in S$ and any $i,j=1,2$.\ By \ref{item: TracialDens} of Lemma \ref{lemma: ConstrG0Tracial}, there exists $g\in G_0$ such that
\begin{equation}\label{eq: Tracial2}
    |f(x)-L(\rho_A(y))(x)-g(x)|<\delta
\end{equation} for any $x\in S$.\ Now consider the element $(y^{(0)},g)\in G$, where $y^{(0)}$ is the sequence which is constant $0$ apart from the zero entry which is equal to $y$.\ We claim that $$\hat{L}(\xi_i,g_i)(x)<\hat{L}(y^{(0)},g)(x)<\hat{L}(\eta_j,h_j)(x)$$ for any $x\in S$.\ First note that for any $x\in S$ and $i=1,2$ we have that
\begin{equation*}
\begin{array}{rcl}
\hat{L}(y^{(0)},g)(x)-\hat{L}(\xi_i,g_i)(x) &=&g(x)+L(\rho_A(y))(x)-\hat{L}(\xi_i,g_i)(x)\\ &\stackrel{\eqref{eq: Tracial2}} {>}&f(x)-\delta-\hat{L}(\xi_i,g_i)(x)\\ &>&0,
\end{array}
\end{equation*} by the choice of $\delta$.\ A similar calculation shows that $\hat{L}(\eta_j,h_j)(x)>\hat{L}(y^{(0)},g)(x)$ for any $x\in S$ and $j=1,2$.\ Hence, by the definition of the order on $G$, we obtain that $$(\xi_i,g_i)<(y^{(0)},g)< (\eta_j,h_j)$$ for any $i,j=1,2$, which shows that $G$ satisfies the strong Riesz interpolation property.\ Thus, $S(G)$ is a Choquet simplex.\ Moreover, $S(G)$ is metrisable since $G$ is countable.

We will now define the automorphism $\alpha$ of $G$. Let $\alpha$ be the automorphism of $(G,G_+,v)$ given by $$\alpha(\xi,f)=(\sigma(\xi),e^{-\pi}f),$$ where $\sigma(\xi)=(\xi_{n+1})_{n\in\mathbb{Z}}$ is the left shift on $\bigoplus\limits_{\mathbb{Z}}K_0(A)$ and $e^{-\pi}$ denotes the function $x\mapsto e^{-\pi(x)}$. To check \ref{item: Kth4}, let $(\xi,g)\in G$ such that $(\id-\alpha)(\xi,g)=0$. Then $\sigma(\xi)=\xi$ and $e^{-\pi}g=g$. Since $\sigma$ is the shift on $\bigoplus_{\mathbb{Z}}K_0(A)$, it follows that $\xi=0$.\ Moreover, $g$ is supported away from $\pi^{-1}(0)$ by Lemma \ref{lemma: ConstrG0Tracial}, so $g=0$. Thus, $\id-\alpha$ is injective.

We will now check \ref{item: Kth6}. Consider the map $\Sigma_0:G\to K_0(A)$ given by $$\Sigma_0((\xi_n),f)=\sum\limits_{n\in\mathbb{Z}}\xi_n,$$ for any $(\xi_n)\in\bigoplus\limits_{\mathbb{Z}} K_0(A)$ and $f\in G_0$.\ We claim that the kernel of $\Sigma_0$ is $(\id-\alpha)(G)$.\ If $(\xi,g)\in G$, then $\Sigma_0((\id-\alpha)(\xi,g))=0$.\ Conversely, let $(\xi,g)\in G$ such that $\Sigma_0(\xi,g)=0$ and choose $N\in\mathbb{N}$ such that $\xi_n=0$ for all $|n|\geq N$.\ We follow the proof in \cite[Lemma 4.6]{KT21}.\ Set $z_n=0$ for $n\geq N$ and $$z_n=\xi_n+z_{n+1},\ n<N.$$ Then $$z_{-N}=\xi_{-N}+\sum_{i=1}^{2N}\xi_{-N+i}=0$$ since $\Sigma_0(\xi,g)=0$.\ This yields that $z_n=0$ for any $n\leq -N$.\ Moreover, if we set $z=(z_n)_{n\in\mathbb{Z}}$, we get that $(\id-\sigma)(z)=\xi$.\ By condition \ref{item: TracialAutCond} in Lemma \ref{lemma: ConstrG0Tracial}, it also follows that there exists $f\in G_0$ such that $g=(1-e^{-\pi})f$, so $(\id-\alpha)(z,f)=(\xi,g)$.\ Thus, the kernel of $\Sigma_0$ is $(\id-\alpha)(G)$, so $\Sigma_0$ induces an isomorphism $$\Sigma: G/(\id-\alpha)(G)\to K_0(A).$$ 

The proof of \ref{item: Kth5} follows as in \cite[Lemma 3.8]{EST}. Let $\beta\in\mathbb R$ and $\varphi$ be a state on $(G,G_+,v)$ such that $\varphi\circ\alpha=e^{-\beta}\varphi$.

The map $\hat{L}$ can be extended uniquely to a $\mathbb{Q}$-linear map on $\mathbb{Q}\otimes_\mathbb{Z}G$.\ For ease of notation, we will identify $\hat{L}$ with its extension $\hat{L}:\mathbb{Q}\otimes_\mathbb{Z}G\to  \mathcal{A}(S,\pi)$.\ Then, we can also extend $\varphi$ uniquely to a $\mathbb{Q}$-linear state $\hat{\varphi}$ of $\mathbb{Q}\otimes_\mathbb{Z}G$.\ Suppose that $\hat{L}(\xi,g)=0$ for some $(\xi,g)\in \mathbb{Q}\otimes_\mathbb{Z}G$.\ Then, $\frac{1}{n}v+(\xi,g),\frac{1}{n}v-(\xi,g)\in (\mathbb{Q}\otimes_\mathbb{Z}G)_+$, so $$-\frac{1}{n}\leq\hat{\varphi}(\xi,g)\leq \frac{1}{n}$$ for any $n\in\mathbb{N}$.\ Thus, $\hat{\varphi}(\xi,g)=0$, which implies that $\hat{\varphi}$ factors through $\hat{L}$. Indeed, there is a $\mathbb{Q}$-linear map $$\psi:\hat{L}(\mathbb{Q}\otimes_\mathbb{Z}G)\to\mathbb R, \ \text{such that}\ \psi\circ\hat{L}=\hat{\varphi}.$$

We claim that $\psi$ is continuous.\ Let $f\in \hat{L}(\mathbb{Q}\otimes_\mathbb{Z}G)$ and suppose by compactness of $S$ that $|f(x)|<\frac{n}{m}$ for any $x\in S$, for some $n,m\in\mathbb{N}$.\ Then, there exists $w\in\mathbb{Q}\otimes_\mathbb{Z}G$ such that $\hat{L}(w)=f$. Since $$-n=-n\hat{L}(v)(x)<m\hat{L}(w)(x)<n\hat{L}(v)(x)=n$$ for any $x\in S$, we get that $-nv<mw<nv$ in $\mathbb{Q}\otimes_\mathbb{Z}G$.\ Since $\psi\circ\hat{L}=\hat{\varphi}$ and $\hat{\varphi}(v)=1$, it follows that $|\psi(f)|=|\hat{\varphi}(w)|\leq\frac{n}{m}$. If there exists $g\in\hat{L}(\mathbb{Q}\otimes_\mathbb{Z}G)$ such that $|\psi(g)|>\sup\limits_{x\in S}|g(x)|$, then there exist $n,m\in\mathbb{N}$ such that $$|\psi(g)|>\frac{n}{m}>\sup\limits_{x\in S}|g(x)|.$$ But by the argument above, if $|g(x)|<\frac{n}{m}$ for any $x\in S$, then $|\psi(g)|\leq\frac{n}{m}$, so $\psi$ must be norm-contractive.

It follows from \ref{item: Kth3} above that $\hat{L}(\mathbb{Q}\otimes_\mathbb{Z}G)$ is uniformly dense in $\mathcal{A}(S,\pi)$.\ By continuity of $\psi$, we get that $\psi$ extends uniquely to a linear norm-contractive map $\psi:\mathcal{A}(S,\pi)\to\mathbb{R}$.\ The fact that there exists a unique $s\in \pi^{-1}(\beta)$ such that $\psi=\ev_s$ follows as in \cite[Lemma 3.8]{EST}.\ Hence, $\varphi=\ev_s\circ \hat{L}$ as required.
\end{proof}

\begin{rmk}
Note that the assumption that $A$ has real rank zero was instrumental in showing that $S(G)$ is a Choquet simplex.
\end{rmk}

As it was done in Proposition \ref{prop: SESRordam}, we now build a suitable $K_1$-group.

\begin{prop}\label{prop: SESRordamTracial}
There exist a countable, abelian, torsion free group $H$, an automorphism $\kappa$ of $H$ and a homomorphism $q:H\to K_1(A)$ such that 
 \[
\begin{tikzcd}
0 \ar{r} & H \ar{r}{\id-\kappa} & H\ar{r}{q} & K_1(A)\ar{r} & 0
\end{tikzcd}
\] is exact.
\end{prop}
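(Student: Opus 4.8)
The plan is to deduce this as an immediate instance of Rørdam's group-theoretic realisation result, exactly as was done for $K_0$ and $K_1$ in the proof of Proposition \ref{prop: SESRordam}. Concretely, I would apply \cite[Proposition 3.5]{SESRordam} to the pair $(K_1(A),0)$: since $A$ is separable, $K_1(A)$ is a countable abelian group, which is all the cited proposition requires of its input. Its conclusion then furnishes a countable torsion-free abelian group $H$, an automorphism $\kappa$ of $H$, and a homomorphism $q\colon H\to K_1(A)$ for which the displayed sequence is exact; in particular $\id-\kappa$ is injective and $q$ is surjective. This is precisely the assertion, so no further argument is needed. (The role of $H$ here is the evident one: it will be used as $K_1(B)$, with $\kappa=K_1(\gamma)$, so that the Pimsner--Voiculescu sequence recovers $K_1(A)$ as the cokernel of $\id-\kappa$, just as $K_0(A)$ was recovered from $G$ and $\alpha$ in Lemma \ref{lemma: ConstrKthTracial}.)

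The only point worth a remark is the degenerate case $K_1(A)=0$, where the lemma returns $H=0$. The statement as phrased permits this; but, mirroring the adjustment made for $H_0$ in the proof of Proposition \ref{prop: SESRordam}, if a nonzero group is preferred one may instead take $H=\mathbb{Q}$, $\kappa(x)=2x$, and $q=0$. This still yields an exact sequence, since $\id-\kappa$ is then multiplication by $-1$, hence an automorphism of $\mathbb{Q}$, and its cokernel is $0=K_1(A)$.

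There is no genuine obstacle: all the content is carried by \cite[Proposition 3.5]{SESRordam}, and verifying its hypotheses is immediate from separability of $A$. The only thing to be attentive to is the bookkeeping of conventions, namely that feeding in the pair $(K_1(A),0)$ is what forces $\ker(\id-\kappa)=0$ rather than prescribing a nontrivial kernel — but this is the same convention already used for the pairs appearing in Proposition \ref{prop: SESRordam}.
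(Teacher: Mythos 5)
Your proposal is correct and coincides with the paper's own proof, which likewise just applies \cite[Proposition 3.5]{SESRordam} to the pair $(K_1(A),0)$. Your extra remark on the degenerate case $K_1(A)=0$ is harmless but unnecessary, since unlike for $H_0$ in Proposition \ref{prop: SESRordam} the statement here does not require $H$ to be nonzero.
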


\begin{proof}
We apply \cite[Proposition 3.5]{SESRordam} to the pair $(K_1(A),0)$.
\end{proof}

We can now construct a classifiable $\C$-algebra $B$. To show that we can identify $A$ with a corner of a crossed product of $B\otimes\mathcal{K}$, we will produce an automorphism of $B\otimes\mathcal{K}$ such that the crossed product is $\Z$-stable.\ The strategy is similar to the one in Section \ref{sect: ProofA}.\ We first show that we can choose an automorphism of $B\otimes\mathcal{K}$ which has finite Rokhlin dimension.\ Combining this with the fact that $B\otimes\mathcal{K}$ has finite nuclear dimension will yield that the obtained crossed product has finite nuclear dimension. 

\begin{lemma}\label{lemma: AutTracialCase}
Let $\rho_G:G\to \Aff(S(G))$ be the canonical map given by $\rho_G(g)(\phi)=\phi(g)$ for any $g\in G$ and $\phi\in S(G)$. There exist a simple, separable, unital, nuclear, $\Z$-stable $\C$-algebra $B$, satisfying the $\UCT$ and an automorphism $\gamma$ of $B\otimes\mathcal{K}$ such that the following conditions hold:
\begin{enumerate}[label=\textit{(\roman*)}]
\item $\Ell(B)\cong (G,G_+,v, H, S(G), \rho_G)$;\label{item: CP4Tra}
\item $K_0(\gamma)=\alpha$ and $K_1(\gamma)=\kappa$;\label{item: CP1Tra}  
\item $(B\otimes\mathcal{K})\rtimes_\gamma\mathbb{Z}$ is simple and $\Z$-stable;\label{item: CP2Tra}
\item the restriction map is a bijection from the densely defined lower semicontinuous traces on $(B\otimes\mathcal{K})\rtimes_\gamma\mathbb{Z}$ onto the $\gamma$-invariant densely defined lower semicontinuous traces on $B\otimes\mathcal{K}$.\label{item: CP3Tra}
\end{enumerate}
\end{lemma}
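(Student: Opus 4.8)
The plan is to build $B$ from the range--of--the--invariant result (Theorem \ref{thm: InvRange}), to realise the automorphism $\alpha$ of $G$ as the $K$-theoretic action of an automorphism of $B\otimes\mathcal K$ via a classification of automorphisms of $B\otimes\mathcal K$, then perturb it to one with finite Rokhlin dimension, and finally to deduce simplicity and $\Z$-stability of the crossed product from Kishimoto's criterion and the finite--nuclear--dimension machinery, with \ref{item: CP3Tra} coming from Lemma \ref{lemma: TracesCrossProd}.

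First I would construct $B$. By Lemma \ref{lemma: ConstrKthTracial}\ref{item: Kth1}--\ref{item: Kth2} the triple $(G,G_+,v)$ is a simple, countable, weakly unperforated ordered abelian group with order unit $v$, $H$ is a countable abelian group, and $S(G)$ is a metrisable Choquet simplex. The tautological pairing $\rho_G$ determines $G_+$: the forward inclusion uses that a nonzero positive element of a simple ordered group with order unit is an order unit, while the reverse inclusion is the standard fact that the strict order on a simple weakly unperforated ordered abelian group is recovered from its states (see \cite{goodearlbook}); moreover $\rho_G$ is weakly unperforated because $G$ is, and it induces tautologically a surjection of $S(G)$ onto the positive homomorphisms $(G,G_+)\to\mathbb R$ sending $v$ to $1$. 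Hence Theorem \ref{thm: InvRange} yields a simple, separable, unital, nuclear, $\Z$-stable $\C$-algebra $B$ satisfying the $\UCT$ with $\Ell(B)\cong(G,G_+,v,H,S(G),\rho_G)$, which is \ref{item: CP4Tra}. Being simple, separable, nuclear and $\Z$-stable (equivalently, by the proof of Theorem \ref{thm: InvRange}, an inductive limit of building blocks of finite nuclear dimension), $B$ and hence $B\otimes\mathcal K$ has finite nuclear dimension; I will use this below.

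Next I would produce $\gamma$. Identify $K_0(B\otimes\mathcal K)=K_0(B)=G$, $K_1(B\otimes\mathcal K)=H$ and $T(B)\cong S(G)$. Since $B$ is unital, every densely defined lower semicontinuous trace on $B\otimes\mathcal K$ is a non-negative multiple of a tracial state of $B$, so the cone of such traces is the cone over $S(G)$, which the pairing identifies with the cone of positive homomorphisms $G\to\mathbb R$; as $\alpha$ preserves $G_+$, the rule $\psi\mapsto\psi\circ\alpha$ is an affine, $\mathbb R_{\geq0}$-homogeneous homeomorphism $\Theta$ of this cone, and $(\alpha,\kappa,\Theta)$ is an automorphism of the (stable) Elliott invariant of $B\otimes\mathcal K$ with $\Theta$ compatible with $\alpha$ on $K_0$. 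By the classification of $^*$-homomorphisms of classifiable $\C$-algebras from \cite{classif}, applied in the stable setting (which reduces to the unital case handled by Theorem \ref{thm: Classif}), there is an automorphism $\gamma'$ of $B\otimes\mathcal K$ inducing $(\alpha,\kappa,\Theta)$. Building on \cite{RokhlinDim19}, I would then replace $\gamma'$ by an approximately unitarily equivalent automorphism $\gamma$ of $B\otimes\mathcal K$ with finite Rokhlin dimension (\cite[Lemma 4.7]{bhishan}); since approximately unitarily equivalent automorphisms induce the same maps on $K$-theory and on traces, $\gamma$ still satisfies $K_0(\gamma)=\alpha$, $K_1(\gamma)=\kappa$ (giving \ref{item: CP1Tra}) and still induces $\Theta$ on traces. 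This middle step, in particular setting up and invoking the classification of automorphisms of the \emph{stable} algebra $B\otimes\mathcal K$ with the correct $\alpha$-compatible action on the trace cone, is the main obstacle: it is where the input from \cite{classif} is genuinely needed and where nonunitality forces care (and it is also exactly the compatibility needed downstream to match the pairing of $A$ with that of the corner of the crossed product).

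Finally I would verify \ref{item: CP2Tra} and \ref{item: CP3Tra}. For $n\neq0$ we have $K_0(\gamma^n)=\alpha^n\neq\id$: its restriction to $\bigoplus_{\mathbb Z}K_0(A)$ is the $n$-fold shift, which is nontrivial since $[1_A]_0\neq0$ in the stably finite algebra $A$; hence $\gamma^n$ is not inner, and therefore properly outer, as $B\otimes\mathcal K$ is simple and so has no nontrivial invariant ideals. As in the proof of Lemma \ref{lemma: ConstrCrossProdKirch} (using \cite[Theorems 6.6 and 7.2]{SimpleCrossedProd}, or directly Kishimoto's simplicity criterion from \cite{SimpleCrossedProdKishimoto}), $(B\otimes\mathcal K)\rtimes_\gamma\mathbb Z$ is simple. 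Since $B\otimes\mathcal K$ has finite nuclear dimension and $\gamma$ has finite Rokhlin dimension, the crossed product has finite nuclear dimension (\cite[Theorem 6.2]{RokhlinDim19}), hence is $\Z$-stable by \cite[Corollary 8.7]{T14}; this completes \ref{item: CP2Tra}. For \ref{item: CP3Tra}, note that $B\otimes\mathcal K$ has an approximate unit of projections (take $1_B\otimes p_r$ with $p_r\in\mathcal K$ finite-rank) and $\gamma$ has finite Rokhlin dimension, so Lemma \ref{lemma: TracesCrossProd} gives the desired bijection between densely defined lower semicontinuous traces on $(B\otimes\mathcal K)\rtimes_\gamma\mathbb Z$ and $\gamma$-invariant ones on $B\otimes\mathcal K$.
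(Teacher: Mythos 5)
Your overall route coincides with the paper's: build $B$ from Theorem \ref{thm: InvRange}, realise $(\alpha,\kappa)$ on $K$-theory by an automorphism of $B\otimes\mathcal{K}$, replace it by one of finite Rokhlin dimension via \cite[Lemma 4.7]{bhishan}, and deduce \ref{item: CP2Tra} from Kishimoto's simplicity criterion together with finite nuclear dimension and \cite[Corollary 8.7]{T14}, and \ref{item: CP3Tra} from Lemma \ref{lemma: TracesCrossProd}. Your verification of the hypotheses of Theorem \ref{thm: InvRange} and your endgame are correct (indeed you are slightly more explicit than the paper about why $\rho_G$ determines $G_+$ and surjects onto the states normalising $v$).

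The one place where your write-up stops short is precisely the step you yourself flag as ``the main obstacle'': you assert that the classification from \cite{classif}, ``applied in the stable setting,'' produces $\gamma'$ and that this ``reduces to the unital case,'' but you do not carry out the reduction, and that reduction is the actual content of this step. The paper's device is the following. Put $p_1=1_B\otimes e_{11}$ and choose a projection $p_2\in B\otimes\mathcal{K}$ with $[p_2]_0=\alpha([p_1]_0)$. Fix a $^*$-isomorphism $\theta:B\otimes\mathcal{K}\to B\otimes\mathcal{K}\otimes\mathcal{K}$ agreeing on $K$-theory with $x\mapsto x\otimes e_{11}$, and use Brown's theorem to find isometries $v_i\in\mathcal{M}(B\otimes\mathcal{K}\otimes\mathcal{K})$ with $v_iv_i^*=p_i\otimes 1$; the resulting maps $\eta_i=\Ad(v_i)\circ(\,\cdot\otimes e_{11}):B\otimes\mathcal{K}\to p_i(B\otimes\mathcal{K})p_i\otimes\mathcal{K}$ are $K$-theory isomorphisms. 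Transporting $\alpha$ and $\kappa$ through these gives a unital ordered isomorphism $\alpha':K_0(p_1(B\otimes\mathcal{K})p_1)\to K_0(p_2(B\otimes\mathcal{K})p_2)$ and a $K_1$-isomorphism $\kappa'$; because $T(B)\cong S(K_0(B))$ by construction, there is a \emph{unique} affine isomorphism $\sigma$ of the trace spaces of the corners compatible with $\alpha'$, so no additional tracial data has to be matched. The \emph{unital} classification theorem \cite[Corollary 9.5]{classif} then yields a unital $^*$-isomorphism $\psi:p_1(B\otimes\mathcal{K})p_1\to p_2(B\otimes\mathcal{K})p_2$ realising $(\alpha',\kappa',\sigma)$, and $\gamma'=(\theta^{-1}\circ\Ad(v_2^*))\circ(\psi\otimes\id_{\mathcal{K}})\circ(\Ad(v_1)\circ\theta)$ has $K_0(\gamma')=\alpha$ and $K_1(\gamma')=\kappa$. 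Note also that the trace-cone map you call $\Theta$ never needs to be prescribed or preserved separately: compatibility on traces is automatic from the $K_0$-data since $T(B)\cong S(K_0(B))$, and after the passage to $\gamma$ only $K_i(\gamma)$ and finite Rokhlin dimension are used in the remainder of the proof.
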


\begin{proof}
To construct a $\C$-algebra $B$ as in the statement of the lemma, we apply Elliott's range-of-the-invariant result (see Theorem \ref{thm: InvRange}). Lemma \ref{lemma: ConstrKthTracial} shows that $(G,G_+,v)$ is a simple, weakly unperforated, countable ordered abelian group and $S(G)$ is a metrisable Choquet simplex. Thus, by Theorem \ref{thm: InvRange}, there exists a simple, separable, unital, nuclear, $\Z$-stable $\C$-algebra $B$, satisfying the $\UCT$ such that $\Ell(B)\cong (G,G_+,v, H, S(G), \rho_G).$

We construct an automorphism $\gamma'$ which satisfies \ref{item: CP1Tra}. Set $p_1=1_B\otimes e_{11}$, where $e_{11}$ is a minimal projection in $\mathcal{K}$, and say that $\alpha([p_1]_0)=[p_2]_0$ for some projection $p_2\in B\otimes\mathcal{K}$.\ Then, essentially since any matrix amplification of $B\otimes\mathcal{K}$ is isomorphic to $B\otimes\mathcal{K}$, there exists a $^*$-isomorphism $\theta:B\otimes\mathcal{K}\to B\otimes\mathcal{K}\otimes\mathcal{K}$ such that $K_i(\theta)=K_i(\phi)$ for $i=0,1$ (see for example the argument in \cite[Corollary 6.2.11]{WO}), where $\phi:B\otimes\mathcal{K}\to B\otimes\mathcal{K}\otimes\mathcal{K}$ is given by $\phi(x)=x\otimes e_{11}$. By \cite{Brown77}, we get isometries $v_i\in \mathcal{M}(B\otimes\mathcal{K}\otimes\mathcal{K})$ such that $v_iv_i^*=p_i\otimes 1_{\mathcal{M}(\mathcal{K})}$ for $i=1,2$. Thus, the maps $$\eta_i:=\Ad(v_i)\circ \phi: B\otimes\mathcal{K}\to p_i(B\otimes\mathcal{K})p_i\otimes\mathcal{K}$$ induce isomorphisms at the level of $K_0$ and $K_1$ for any $i=1,2$.

We then set 
$$\alpha':=K_0(\eta_2)\circ\alpha\circ K_0(\eta_1)^{-1}$$ to be a unital ordered group isomorphism from $K_0(p_1(B\otimes\mathcal{K})p_1)$ to $K_0(p_2(B\otimes\mathcal{K})p_2)$. Moreover, set $$\kappa':=K_1(\eta_2)\circ\kappa\circ K_1(\eta_1)^{-1}$$ to be a group isomorphism from $K_1(p_1(B\otimes\mathcal{K})p_1)$ to $K_1(p_2(B\otimes\mathcal{K})p_2)$. 

By construction of $B$, the canonical map from tracial states on $B$ to states on $K_0(B)$ is an affine homeomorphism.\ Therefore, the tracial states on $p_i(B\otimes\mathcal{K})p_i$ are determined by states on $K_0(p_i(B\otimes\mathcal{K})p_i)$ for any $i=1,2$. Hence, it is immediate to see that there exists a unique linear isomorphism $\sigma:\Aff(T(p_1(B\otimes\mathcal{K})p_1))\to \Aff(T(p_2(B\otimes\mathcal{K})p_2))$, which is compatible with $\alpha'$ via the canonical pairing maps. Moreover, since $B$ is nuclear, $\Z$-stable, and satisfies the $\UCT$, and all these properties are preserved by stable isomorphisms (see Proposition \ref{prop: StableIsom}), so are $p_i(B\otimes\mathcal{K})p_i$ for $i=1,2$. Then, $p_1,p_2$ are full projections, so $p_i(B\otimes\mathcal{K})p_i$ is simple, separable, unital for $i=1,2$. Thus, by \cite[Corollary 9.5]{classif}, there exists a unital $^*$-isomorphism $\psi:p_1(B\otimes\mathcal{K})p_1\to p_2(B\otimes\mathcal{K})p_2$ such that $(K_0(\psi), K_1(\psi), \Aff(T(\psi)))= (\alpha', \kappa', \sigma)$. 

Set $\gamma'$ to be the following sequence of maps
 \[
\begin{tikzcd}
\gamma': B\otimes\mathcal{K} \ar{r}{\Ad(v_1)\circ\theta} & p_1(B\otimes\mathcal{K})p_1\otimes\mathcal{K} \ar{r}{\psi\otimes\id_{\mathcal{K}}} & p_2(B\otimes\mathcal{K})p_2\otimes\mathcal{K} \ar{r}{\theta^{-1}\circ \Ad(v_2^*)} & B\otimes\mathcal{K}.
\end{tikzcd}
\] By construction, we have that $K_0(\gamma')=\alpha$ and $K_1(\gamma')=\kappa$. 

To check condition \ref{item: CP2Tra}, we will first show that we can choose $\gamma$ such that the crossed product is simple and has finite nuclear dimension.\ Building on work in \cite{RokhlinDim19}, we can take an automorphism $\gamma$ of $B\otimes\mathcal{K}$ with finite Rokhlin dimension such that $K_i(\gamma)=K_i(\gamma')$ for $i=0,1$ (\cite[Lemma 4.7]{bhishan} or \cite[Theorem 3.4]{HWZ15}).\ Since $B\otimes\mathcal{K}$ is simple, nuclear, and $\Z$-stable, it has finite nuclear dimension (\cite[Theorem A]{CE20}).\ Moreover, $\gamma$ has finite Rokhlin dimension, so the crossed product $(B\otimes\mathcal{K})\rtimes_{\gamma}\mathbb{Z}$ has finite nuclear dimension (\cite[Theorem 5.2]{RokhlinDim19} or \cite[Theorem 3.1]{HP15}).\ Since $K_0(\gamma)^n\neq \id$ for all $n\neq 0$, no non-trivial power of $\gamma$ is inner. Moreover, $B\otimes\mathcal{K}$ is simple, so $(B\otimes\mathcal{K})\rtimes_\gamma\mathbb{Z}$ is simple by {\cite[Theorem 3.1]{SimpleCrossedProdKishimoto}}.\ Thus, we get \ref{item: CP2Tra} by \cite[Corollary 8.7]{T14}.

Since $B$ is unital and $\mathcal{K}$ has real rank zero, $B\otimes\mathcal{K}$ has an approximate unit of projections. Moreover, $\gamma$ has finite Rokhlin dimension, so \ref{item: CP3Tra} follows from Lemma \ref{lemma: TracesCrossProd}.
\end{proof}

As $B$ is unital and simple, the restriction map $\tau\mapsto \tau|_{B\otimes e_{11}}$ is a linear order-preserving isomorphism from the cone of densely defined lower semicontinuous traces on $B\otimes\mathcal{K}$ to the space of tracial states on $B$ (\cite[Proposition 4.7]{CuPed79}). But, by construction, the space of tracial states on $B$ is affinely homeomorphic to the space of states on $K_0(B)$, so we identify states on $K_0(B)$ with densely defined lower semicontinuous traces on $B\otimes\mathcal{K}$.\ We will now show that $A$ can be identified with a corner of the crossed product $(B\otimes\mathcal{K})\rtimes_\gamma\mathbb{Z}$.

\begin{lemma}\label{lemma: IsomCorner}
Let $B$ be the $\C$-algebra and $\gamma$ be the automorphism of $B\otimes\mathcal{K}$ both given by Lemma \ref{lemma: AutTracialCase} and $p=1\otimes e_{11}\in B\otimes\mathcal{K}$.\ Then $p\left((B\otimes\mathcal{K})\rtimes_\gamma\mathbb{Z}\right)p\cong A$.
\end{lemma}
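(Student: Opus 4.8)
The plan is to show that $C \coloneqq p\big((B\otimes\mathcal{K})\rtimes_\gamma\mathbb{Z}\big)p$ is isomorphic to $A$ by invoking the classification theorem. First I would check that $C$ lies in the relevant class: by \ref{item: CP2Tra} of Lemma \ref{lemma: AutTracialCase} the crossed product $(B\otimes\mathcal{K})\rtimes_\gamma\mathbb{Z}$ is simple and $\Z$-stable, so $p$ is a full projection in it, and $B\otimes\mathcal{K}$ is separable, nuclear, and satisfies the $\UCT$ (the $\UCT$ passing to $B\otimes\mathcal{K}$ since it is stably isomorphic to $B$); applying Proposition \ref{prop: PermanenceProp} with $B\otimes\mathcal{K}$ in place of $B$ and $p$ in place of $e$ then shows that $C$ is a simple, separable, unital, nuclear, $\Z$-stable $\C$-algebra satisfying the $\UCT$. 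By Theorem \ref{thm: Classif} together with Remark \ref{rmk: ElliottvsKTu}, it therefore suffices to produce an isomorphism of the tuples $\big(K_0,[1]_0,K_1,T,\rho\big)$ for $C$ and $A$; the order on $K_0$ comes for free once the pairing with traces is matched.

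For the $K$-theory I would run the Pimsner--Voiculescu exact sequence (\cite[Theorem 2.4]{PV80}) for $B\otimes\mathcal{K}$ and $\gamma$. Using $K_i(B\otimes\mathcal{K})\cong K_i(B)$ and \ref{item: CP1Tra} of Lemma \ref{lemma: AutTracialCase}, the relevant maps are $\id_G-\alpha$ on $K_0$ and $\id_H-\kappa$ on $K_1$, which are injective by \ref{item: Kth4} of Lemma \ref{lemma: ConstrKthTracial} and by Proposition \ref{prop: SESRordamTracial}. Hence both index maps in the sequence vanish, and one obtains short exact sequences
\[ 0 \longrightarrow G \xrightarrow{\ \id-\alpha\ } G \longrightarrow K_0\big((B\otimes\mathcal{K})\rtimes_\gamma\mathbb{Z}\big) \longrightarrow 0 \]
and its analogue with $H$ and $\kappa$ for $K_1$. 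This identifies $K_0\big((B\otimes\mathcal{K})\rtimes_\gamma\mathbb{Z}\big)\cong G/(\id-\alpha)(G)\xrightarrow{\ \Sigma\ }K_0(A)$ via \ref{item: Kth6} of Lemma \ref{lemma: ConstrKthTracial} and $K_1\big((B\otimes\mathcal{K})\rtimes_\gamma\mathbb{Z}\big)\cong H/(\id-\kappa)(H)\cong K_1(A)$ via Proposition \ref{prop: SESRordamTracial}. Since $p$ is full, $C\hookrightarrow(B\otimes\mathcal{K})\rtimes_\gamma\mathbb{Z}$ induces isomorphisms on $K$-theory, and under the resulting isomorphism $K_0(C)\to K_0(A)$ the class $[p]_0=[1_B]_0=v$ is sent to $\Sigma_0(v)=[1_A]_0$, as required.

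Next I would match traces. By \ref{item: CP3Tra} of Lemma \ref{lemma: AutTracialCase}, restriction identifies the cone of densely defined lower semicontinuous traces on $(B\otimes\mathcal{K})\rtimes_\gamma\mathbb{Z}$ with the $\gamma$-invariant such traces on $B\otimes\mathcal{K}$; since $p$ is full, this cone is in turn identified (by stable isomorphism) with the densely defined lower semicontinuous traces on $C$, the tracial states on $C$ corresponding to those traces of value $1$ at $p$. Restricting once more to $B\otimes e_{11}\cong B$ (\cite[Proposition 4.7]{CuPed79}) and using that, by construction of $B$ via Theorem \ref{thm: InvRange}, $T(B)$ is affinely homeomorphic to the state space $S(G)$ of $K_0(B)=G$ — under which $\gamma$-invariance translates into $\alpha=K_0(\gamma)$-invariance — I would obtain an affine homeomorphism between $T(C)$ and the space of $\alpha$-invariant states on $(G,G_+,v)$, which by \ref{item: Kth5} of Lemma \ref{lemma: ConstrKthTracial} (taken with $\beta=0$) is affinely homeomorphic to $\pi^{-1}(0)\cong T(A)$. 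Continuity of these maps together with metrisability of the simplices (they are compact, since $C$ and $A$ are unital and separable) then upgrades the bijection to a homeomorphism.

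The crux, and the step I expect to cause the most trouble, is checking that the isomorphism $\Phi\colon K_0(C)\to K_0(A)$ and the homeomorphism $\Psi\colon T(C)\to T(A)$ above intertwine the pairing maps $\rho_C$ and $\rho_A$. Unwinding the identifications, an element of $K_0(C)$ has the form $K_0(\iota)(x)$ for $x=(\xi,g)\in G=K_0(B\otimes\mathcal{K})$, where $\iota$ denotes the inclusion of $B\otimes\mathcal{K}$ into the crossed product; and given $\bar\tau\in T(C)$ with associated $\alpha$-invariant state $\varphi$ on $G$ and point $s=\Psi(\bar\tau)\in\pi^{-1}(0)=T(A)$ (so that $\varphi=\ev_s\circ\hat{L}$ by \ref{item: Kth5} of Lemma \ref{lemma: ConstrKthTracial}), I would verify $\rho_C(K_0(\iota)(x))(\bar\tau)=\varphi(x)=\hat{L}(\xi,g)(s)$ on the one side and $\rho_A(\Phi(K_0(\iota)(x)))(s)=\rho_A(\Sigma_0(\xi,g))(s)$ on the other. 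These agree because for $s\in\pi^{-1}(0)$ one has $\pi(s)=0$, so $e^{n\pi(s)}=1$, one has $g(s)=0$ since $g\in G_0\subseteq\mathcal{A}_0(S,\pi)$ vanishes on $T(A)$, and $L(\rho_A(\xi_n))(s)=\rho_A(\xi_n)(s)$ since $r\circ L=\id$; hence $\hat{L}(\xi,g)(s)=\sum_n\rho_A(\xi_n)(s)=\rho_A(\Sigma_0(\xi,g))(s)$ by \eqref{eq: LHatMap}. With this compatibility in hand, $(\Phi,\Psi)$ is an isomorphism of the invariants $\big(K_0,[1]_0,K_1,T,\rho\big)$ of $C$ and $A$ — in particular $\Phi$ is automatically an order isomorphism by Remark \ref{rmk: ElliottvsKTu} — and Theorem \ref{thm: Classif} yields $C\cong A$. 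The real difficulty here is purely bookkeeping: one must track how the Pimsner--Voiculescu isomorphism, the passage of traces through the crossed product and the corner, and the map $\hat{L}$ built into the definition of $G_+$ all interact, which is exactly the "extra care" flagged in the introduction.
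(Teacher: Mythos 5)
Your proposal is correct and follows essentially the same route as the paper: verify membership in the classifiable class via Proposition \ref{prop: PermanenceProp}, compute $K_0$, $K_1$ and the position of the unit through the Pimsner--Voiculescu sequence using the injectivity of $\id-\alpha$ and $\id-\kappa$, identify $T(C)$ with $\pi^{-1}(0)\cong T(A)$ through the chain of trace correspondences, and then check the compatibility $\rho_A\circ\Sigma_0=\ev_s\circ\hat{L}$ on $\pi^{-1}(0)$ exactly as the paper does. The paper invokes \cite[Theorem 9.9]{classif} rather than Theorem \ref{thm: Classif} directly, but this is not a substantive difference.
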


\begin{proof}
We will use the classification theorem in \cite[Theorem 9.9]{classif}.\ First, we check that $p\left((B\otimes\mathcal{K})\rtimes_\gamma\mathbb{Z}\right)p$ is a simple, separable, unital, nuclear, $\Z$-stable $\C$-algebra satisfying the $\UCT$. Note that $(B\otimes\mathcal{K})\rtimes_\gamma\mathbb{Z}$ is simple by \ref{item: CP2Tra} of Lemma \ref{lemma: AutTracialCase}, so $p$ is a full projection. Moreover, $B\otimes\mathcal{K}$ is nuclear, satisfies the UCT, and $(B\otimes\mathcal{K})\rtimes_\gamma\mathbb{Z}$ is $\Z$-stable by \ref{item: CP2Tra} of Lemma \ref{lemma: AutTracialCase}. Therefore, by Proposition \ref{prop: PermanenceProp}$, p\left((B\otimes\mathcal{K})\rtimes_\gamma\mathbb{Z}\right)p$ is a simple, separable, unital, nuclear, $\Z$-stable $\C$-algebra which satisfies the UCT.

It remains to check that the Elliott invariant of $p\left((B\otimes\mathcal{K})\rtimes_\gamma\mathbb{Z}\right)p$ is isomorphic to the Elliott invariant of $A$.\ As discussed in Remark \ref{rmk: ElliottvsKTu}, we do not need to check that the positive cones in the $K_0$-groups coincide.\ Note first that the space of tracial states on $p\left((B\otimes\mathcal{K})\rtimes_\gamma\mathbb{Z}\right)p$ is linearly isomorphic to the space of densely defined lower semicontinuous traces on $(B\otimes\mathcal{K})\rtimes_\gamma\mathbb{Z}$ by \cite[Proposition 4.7]{CuPed79}. By \ref{item: CP3Tra} of Lemma \ref{lemma: AutTracialCase}, the latter is in a bijective correspondence to the space of $\gamma$-invariant densely defined lower semicontinuous traces on $B\otimes\mathcal{K}$.\ As observed previously, since $B$ is simple, unital and $S(K_0(B))\cong T(B)$, it follows that we can identify the space of $\gamma$-invariant densely defined lower semicontinuous traces on $B\otimes\mathcal{K}$ with the space of $K_0(\gamma)$-invariant states on $K_0(B)$. But the latter is homeomorphic to $\pi^{-1}(0)\cong T(A)$ by \ref{item: Kth5} of Lemma \ref{lemma: ConstrKthTracial}. Hence, $T\left(p\left((B\otimes\mathcal{K})\rtimes_\gamma\mathbb{Z}\right)p\right)\cong T(A)$.

To compute the $K$-groups of $p\left((B\otimes\mathcal{K})\rtimes_\gamma\mathbb{Z}\right)p$, we apply the Pimsner-Voiculescu exact sequence in \cite[Theorem 2.4]{PV80} to the $\C$-algebra $B\otimes\mathcal{K}$ and the automorphism $\gamma$. Identifying $K_i(B\otimes\mathcal{K})$ with $K_i(B)$ for $i=0,1$, we get that 
\[
\begin{tikzcd}
 K_0(B)\ar{r}{\id-K_0(\gamma)} & K_0(B) \ar{r} & K_0((B\otimes\mathcal{K})\rtimes_\gamma\mathbb{Z}) \ar{d} \\
 K_1((B\otimes\mathcal{K})\rtimes_\gamma\mathbb{Z}) \ar{u} & K_1(B) \ar{l}  & K_1(B)\ar{l}{\id-K_1(\gamma)}.
\end{tikzcd}
\] Recall from \ref{item: CP4Tra} of Lemma \ref{lemma: AutTracialCase} that $K_1(B)=H$ and from \ref{item: CP1Tra} of Lemma \ref{lemma: AutTracialCase} that $K_1(\gamma)=\kappa$. Then, Proposition \ref{prop: SESRordamTracial} gives that $\id-K_1(\gamma)$ is injective.\ Therefore, the map $K_0((B\otimes\mathcal{K})\rtimes_\gamma\mathbb{Z})\to K_1(B)$ is zero, which yields that the map $K_0(B)\to K_0((B\otimes\mathcal{K})\rtimes_\gamma\mathbb{Z})$ is surjective. Thus, 
\begin{equation}\label{eq: Kthtracial}
K_0((B\otimes\mathcal{K})\rtimes_\gamma\mathbb{Z})\cong K_0(B)/(\id-K_0(\gamma))(K_0(B)).
\end{equation}Since $K_0(B)=G$ by \ref{item: CP4Tra} of Lemma \ref{lemma: AutTracialCase} and $K_0(\gamma)=\alpha$ by \ref{item: CP1Tra} of Lemma \ref{lemma: AutTracialCase}, we get that $K_0((B\otimes\mathcal{K})\rtimes_\gamma\mathbb{Z})\cong K_0(A)$ by \ref{item: Kth6} of Lemma \ref{lemma: ConstrKthTracial}.\ This gives that, $$K_0\left(p\left((B\otimes\mathcal{K})\rtimes_\gamma\mathbb{Z}\right)p\right)\cong K_0(A).$$ Since $\Sigma([p]_0)=[1_A]_0$, it follows that the $K_0$-isomorphism is compatible with the position of the unit.

Combining \ref{item: CP1Tra} of Lemma \ref{lemma: AutTracialCase} and \ref{item: Kth4} of Lemma \ref{lemma: ConstrKthTracial} yields that the map  $\id-K_0(\gamma)$ is injective. Therefore, we obtain the short exact sequence 
 \[
\begin{tikzcd}
0 \ar{r}  & H \ar{r}{\id-K_1(\gamma)} & H\ar{r} & K_1((B\otimes\mathcal{K})\rtimes_\gamma\mathbb{Z}) \ar{r} & 0.
\end{tikzcd}
\] It follows that $K_1((B\otimes\mathcal{K})\rtimes_\gamma\mathbb{Z})\cong K_1(A)$ by Proposition \ref{prop: SESRordamTracial}.\ Hence, $$K_1\left(p\left((B\otimes\mathcal{K})\rtimes_\gamma\mathbb{Z}\right)p\right)\cong K_1(A).$$

To apply \cite[Theorem 9.9]{classif}, it remains to check that $p\left((B\otimes\mathcal{K})\rtimes_\gamma\mathbb{Z}\right)p$ and $A$ have the same pairing between $K$-theory and traces.\ Let $\tau$ be a tracial state on $A$ and $\tau_*$ be the induced state on $K_0(A)$.\ Recall that the homeomorphism from $T(A)$ onto $T(p\left((B\otimes\mathcal{K})\rtimes_\gamma\mathbb{Z}\right)p)$ is given by following sequence of mappings.\ One first uses \ref{item: Kth5} of Lemma \ref{lemma: ConstrKthTracial} to send $\tau$ to $\ev_{\tau}\circ \hat{L}$, which is a $K_0(\gamma)$-invariant state on $K_0(B)$.\ Then, \ref{item: CP4Tra} of Lemma \ref{lemma: AutTracialCase} yields that $\ev_{\tau}\circ \hat{L}$ corresponds to a unique $\gamma$-invariant tracial state on $B\otimes\mathcal{K}$.\ By \ref{item: CP3Tra} of Lemma \ref{lemma: AutTracialCase}, this extends uniquely to a densely defined lower semicontinuous trace on
$(B\otimes\mathcal{K})\rtimes_\gamma\mathbb{Z}$. As mentioned previously, the latter corresponds to a tracial state on
$p\left((B\otimes\mathcal{K})\rtimes_\gamma\mathbb{Z}\right)p$ (\cite[Proposition 4.7]{CuPed79}).

Therefore, it suffices to check that $$\tau_*\circ\Sigma_0 = \ev_{\tau}\circ \hat{L},$$ where $\Sigma_0:K_0(B)\to K_0(A)$ is the homomorphism in \ref{item: Kth6} of Lemma \ref{lemma: ConstrKthTracial} which induces the isomorphism $\Sigma$.
If $(\xi,g)\in K_0(B)$, then $$\tau_*\circ\Sigma_0(\xi,g)=\tau_*\left(\sum\limits_{n\in\mathbb{Z}}\xi_n\right)=\sum\limits_{n\in\mathbb{Z}}\rho_A(\xi_n)(\tau).$$ On the other hand, $$(\ev_{\tau}\circ\hat{L})(\xi,g)=g(\tau)+\sum\limits_{n\in\mathbb{Z}}L(\rho_A(\xi_n))(\tau)e^{n\pi(\tau)}.$$ Since $\pi(\tau)=0$ and $g\in G_0$ is supported away from $T(A)$ by Lemma \ref{lemma: ConstrG0Tracial}, it follows that $$(\ev_{\tau}\circ\hat{L})(\xi,g)=\sum\limits_{n\in\mathbb{Z}}L(\rho_A(\xi_n))(\tau).$$ Furthermore, recall from \eqref{eq: Lmap} that for any $f\in \Aff(T(A))$, $L(f)|_{T(A)}=f$. Thus, $$(\ev_{\tau}\circ\hat{L})(\xi,g)=\sum\limits_{n\in\mathbb{Z}}\rho_A(\xi_n)(\tau)=\tau_*\circ\Sigma_0(\xi,g).$$ Hence, as $A$ and $p\left((B\otimes\mathcal{K})\rtimes_\gamma\mathbb{Z}\right)p$ have the same Elliott invariant, they are isomorphic by \cite[Theorem 9.9]{classif}. 
\end{proof}

\subsection{KMS bundles on unital tracial classifiable C$^*$-algebras}
In this subsection, we will finish the proof of Theorem \ref{thm:ExistenceTracial}. Consider the dual action $\hat{\gamma}$ on $C=(B\otimes\mathcal{K})\rtimes_\gamma\mathbb{Z}$ as a $2\pi$-periodic flow.\ Recall that $\hat{\gamma}_t(f)(x)=e^{-ixt}f(x)$ for any $t\in\mathbb{R}, f\in C_c(\mathbb Z, B\otimes\mathcal{K})$ and $x\in\mathbb Z$.\ Since $p\in B\otimes\mathcal{K}$, $\hat{\gamma}$ restricts to an action on $pCp\cong A$ which we denote by $\theta$. We claim that the $\KMS$-bundle $(S^\theta,\pi^\theta)$ is isomorphic to $(S,\pi)$. We will crucially use \cite[Lemma 3.1]{KT21}, so we state it for the convenience of the reader.

\begin{lemma}{\cite[Lemma 3.1]{KT21}}\label{lemma: TracesTh}
Let $D$ be a $\C$-algebra and $\sigma \in \Aut(D)$ an automorphism of $D$. Let $\hat{\sigma}$ be the dual action on $D \rtimes_\sigma \mathbb Z$ considered as a $2\pi$-periodic flow. For $\beta \in\mathbb R$, the restriction map $\omega \mapsto \omega|_D$ is a bijection from the $\beta$-$\KMS$ weights for $\hat{\sigma}$ onto the densely defined lower semicontinuous traces $\tau$ on $D$ with the property that $\tau\circ\sigma = e^{-\beta}\tau$. The inverse is the map $\tau \mapsto \tau\circ P$, where $P : D\rtimes_\sigma\mathbb Z \to D$ is the canonical conditional expectation.    
\end{lemma}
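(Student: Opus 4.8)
This is \cite[Lemma 3.1]{KT21}; the argument is classical, so here is the plan I would follow. Let $u\in\mathcal M(D\rtimes_\sigma\mathbb Z)$ be the canonical unitary with $uxu^*=\sigma(x)$, normalise the conventions for the dual action (as in \cite{KT21}) so that $u$ is entire-analytic for $\hat\sigma$ while each $x\in D$ is fixed by the entire extension $\hat\sigma_z$, and recall that the canonical conditional expectation is $P(y)=\tfrac1{2\pi}\int_0^{2\pi}\hat\sigma_t(y)\,dt$.

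\textbf{Restriction lands in twisted-invariant traces.} Given a $\beta$-$\KMS$ weight $\omega$ for $\hat\sigma$, I would check that $\tau:=\omega|_D$ is a densely defined lower semicontinuous trace on $D$ with $\tau\circ\sigma=e^{-\beta}\tau$. Lower semicontinuity and dense definedness are inherited by the restriction; the trace identity $\omega(x^*x)=\omega(xx^*)$ for $x\in D$ is immediate from Definition \ref{defn: KMS} since $\hat\sigma_{-i\beta/2}(x)=x$; and applying the $\KMS$ identity to the analytic element $a=yu^*$ (with $x=y^*y$, $y\in D$) and using the analyticity of $u$ gives, after simplification, $\omega(\sigma(x))=e^{-\beta}\omega(x)$, whence $\tau\circ\sigma=e^{-\beta}\tau$.

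\textbf{The map is a bijection with the stated inverse.} Injectivity rests on the fact that every $\hat\sigma$-invariant weight satisfies $\omega=\omega\circ P$; since $P$ has range $D$ this reads $\omega=(\omega|_D)\circ P$, so $\omega$ is recovered from $\omega|_D$, and as $P|_D=\id_D$ one also has $(\tau\circ P)|_D=\tau$, so the two maps are mutually inverse. For surjectivity, given a densely defined l.s.c. trace $\tau$ with $\tau\circ\sigma=e^{-\beta}\tau$, set $\omega:=\tau\circ P$: it is a densely defined l.s.c. weight, it is $\hat\sigma$-invariant because $P\circ\hat\sigma_t=P$, and the $\KMS$ identity I would verify on the dense $*$-subalgebra of finite sums $a=\sum_n x_n u^n$ with $x_n\in D$ (each of which is analytic for $\hat\sigma$). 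For such $a$, both $\omega(a^*a)$ and $\omega(\hat\sigma_{-i\beta/2}(a)\hat\sigma_{-i\beta/2}(a)^*)$ collapse, after applying $P$, to $\sum_n e^{n\beta}\tau(x_n^*x_n)$ — the first via $P(a^*a)=\sum_n\sigma^{-n}(x_n^*x_n)$ together with $\tau\circ\sigma^{-n}=e^{n\beta}\tau$, the second via the analyticity of $u$ together with the trace property of $\tau$. Passing from this core to arbitrary analytic elements (equivalently, verifying the standard equivalent formulations of the $\KMS$ condition) is routine.

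\textbf{Main obstacle.} The algebra is routine; the real care is all in the weight bookkeeping, since one cannot simply transcribe the bounded (tracial-state) case. The delicate points are: that $\omega|_D$ and $\tau\circ P$ remain \emph{densely defined} lower semicontinuous weights (handled by exhausting with elements whose entries lie in the Pedersen ideal of $D$); the interchange $\omega\big(\tfrac1{2\pi}\int_0^{2\pi}\hat\sigma_t(y)\,dt\big)=\omega(y)$ for $\hat\sigma$-invariant l.s.c. weights; and the promotion of the $\KMS$ identity from the polynomial core to arbitrary analytic elements. These technical steps are exactly what is carried out in \cite[Lemma 3.1]{KT21} and in the general theory of $\KMS$ weights in \cite{ThomsenBook}.
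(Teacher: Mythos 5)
The paper offers no proof of this lemma at all---it is stated as a direct quotation of \cite[Lemma 3.1]{KT21}---so there is nothing internal to compare against; your sketch is a correct reconstruction of the standard argument behind that result (restriction via the $\KMS$ identity applied to $xu^{\pm1}$, inverse $\tau\mapsto\tau\circ P$, verification on the polynomial core). You have also correctly isolated the three points where the real work lies for \emph{weights} rather than states---dense definedness of the restriction, the identity $\omega=\omega\circ P$ (which one should justify for $\KMS$ weights specifically, where it follows from invariance plus lower semicontinuity and the vanishing of off-diagonal terms, rather than asserting it for arbitrary invariant weights), and the passage from the core to all analytic elements---so the proposal is sound.
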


\begin{proof}[Proof of Theorem \ref{thm:ExistenceTracial}]This follows the strategy in \cite[Lemma 3.13]{EST}. Let $(\omega,\beta)\in S^\theta$. By \cite[Remark 3.3]{ExtendKMS}, $\omega$ extends uniquely to a $\beta$-$\KMS$ weight $\hat{\omega}$ for $\hat{\gamma}$ on $C$. By Lemma \ref{lemma: TracesTh}, the restriction $\hat{\omega}|_{B\otimes\mathcal{K}}$ is a densely defined lower semicontinuous trace on $B\otimes\mathcal{K}$ such that $$\hat{\omega}|_{B\otimes\mathcal{K}}\circ\gamma=e^{-\beta}\hat{\omega}|_{B\otimes\mathcal{K}}.$$ Since $\hat{\omega}(1\otimes e_{11})=1$, \ref{item: Kth5} of Lemma \ref{lemma: ConstrKthTracial} gives a unique $s\in \pi^{-1}(\beta)$ such that 
\begin{equation}\label{eq: XiMap}
(\hat{\omega}|_{B\otimes\mathcal{K}})_*=\ev_s\circ \hat{L}.
\end{equation}
Then, we can define a map $\xi: S^\theta\to S$ by $\xi(\omega,\beta)=s$. By construction, $\pi\circ\xi=\pi^\theta$ and the restriction $\xi:(\pi^\theta)^{-1}(\beta)\to \pi^{-1}(\beta)$ is affine for any $\beta\in\mathbb{R}$.

We will first check that the map $\xi$ is injective. Let $(\omega_i,\beta_i)\in S^\theta$ such that $\xi(\omega_1,\beta_1)=\xi(\omega_2,\beta_2)=s$ for $i=1,2$. Since $\pi\circ\xi=\pi^\theta$, it follows that $\beta_1=\beta_2$. By construction of the map $\xi$, we have that $$\left(\hat{\omega}_1|_{B\otimes\mathcal{K}}\right)_*= \left(\hat{\omega}_2|_{B\otimes\mathcal{K}}\right)_*.$$ As the space of densely defined lower semicontinuous traces on $B\otimes\mathcal{K}$ is in a bijective correspondence to the space of states on $K_0(B)$, it follows that $\hat{\omega}_1|_{B\otimes\mathcal{K}}=\hat{\omega}_2|_{B\otimes\mathcal{K}}$, so $\hat{\omega}_1=\hat{\omega}_2$ by Lemma \ref{lemma: TracesTh}. Since $\hat{\omega}_i$ is an extension of $\omega_i$ for $i=1,2$, we get that $\omega_1=\omega_2$, so the map $\xi$ is indeed injective.

We will now check that $\xi$ is also surjective.\ Let $s\in S\cap\pi^{-1}(\beta)$ for some $\beta\in\mathbb R$.\ Then $\ev_s\circ \hat{L}$ is a state on $K_0(B)$ such that $$\ev_s\circ \hat{L}\circ K_0(\gamma)=e^{-\beta}\ev_s\circ \hat{L}.$$ By construction, states on $K_0(B)$ are uniquely induced by tracial states on $B$ (Lemma \ref{lemma: AutTracialCase}). These are in one-to-one correspondence with densely defined lower semicontinuous traces on $B\otimes\mathcal{K}$ (\cite[Proposition 4.7]{CuPed79}). Therefore, there exists a unique densely defined lower semicontinuous trace $\tau$ on $B\otimes\mathcal{K}$ such that $\tau_*=\ev_s\circ \hat{L}$ and $\tau_*\circ K_0(\gamma)=e^{-\beta}\tau_*$. Since the canonical map from densely defined lower semicontinuous traces on $B\otimes\mathcal{K}$ to states on $K_0(B)$ is a bijection (Lemma \ref{lemma: AutTracialCase} and \cite[Proposition 4.7]{CuPed79}), we get that $\tau\circ \gamma=e^{-\beta}\tau$. If $P:C\to B\otimes\mathcal{K}$ is the canonical conditional expectation, then the restriction $\tau\circ P|_{pCp}$ is a $\beta$-$\KMS$ state for $\theta$ by Lemma \ref{lemma: TracesTh}.\ By construction, $\xi(\tau\circ P|_{pCp},\beta)=s$, so $\xi$ is surjective.

If we show that $\xi^{-1}:S\to S^\theta$ is continuous, then $\xi$ is a homeomorphism by Lemma \ref{lemma: BundleHomeom}. Recall from Remark \ref{rmk: MetrisBundle} that both $S^\theta$ and $S$ are metrisable and let $s_n$ be a sequence in $S$ which converges to $s$. Then $\hat{L}(g)(s_n)$ converges to $\hat{L}(g)(s)$ for any $g\in G\cong K_0(B)$. If $\xi^{-1}(s_n)=(\omega_n,\beta_n)$ and $\xi^{-1}(s)=(\omega,\beta)$, then \eqref{eq: XiMap} yields that
$$\lim_{n\to\infty}(\hat{\omega}_n|_{B\otimes\mathcal{K}})_*(g)=(\hat{\omega}|_{B\otimes\mathcal{K}})_*(g)$$ for any $g\in K_0(B)$. Since the canonical map from the densely defined lower semicontinuous traces on $B\otimes\mathcal{K}$ to states on $K_0(B)$ is a bijection (follows again from Lemma \ref{lemma: AutTracialCase} and \cite[Proposition 4.7]{CuPed79}), $\hat{\omega}_n|_{B\otimes\mathcal{K}}$ converges pointwise to $\hat{\omega}|_{B\otimes\mathcal{K}}$. Thus, $\hat{\omega}_n$ converges pointwise to $\hat{\omega}$ by Lemma \ref{lemma: TracesTh}, so $\omega_n$ converges pointwise to $\omega$. Moreover, $\beta_n=\pi(s_n)$ converges to $\pi(s)=\beta$ by continuity of $\pi$, so $(\omega_n,\beta_n)$ converges to $(\omega,\beta)$. This shows that $\xi^{-1}$ is continuous. Hence, the $\KMS$-bundle $(S^\theta,\pi^\theta)$ is isomorphic to $(S,\pi)$. Combining this with Lemma \ref{lemma: IsomCorner} finishes the proof of Theorem \ref{thm:ExistenceTracial}.
\end{proof}

\bibliography{KMS}
\bibliographystyle{abbrv}
\end{document}